\documentclass[12pt, twoside, reqno]{amsart}

\usepackage{amsmath}
\usepackage{amsfonts}
\usepackage{amssymb}
\usepackage{amsthm}
\usepackage{hyperref}
\usepackage{graphicx}
\usepackage{cite}
\usepackage{caption}

\usepackage[top=2.5cm, bottom=2.5cm, left=2.5cm, right=2.5cm, headsep=0.2in]{geometry}

\usepackage{fancyhdr}

\pagestyle{fancy}
\fancyhf{}
\fancyhead[CE]{\small\scshape Mihajlo Ceki\'{c}}
\fancyhead[CO]{\small\scshape Harmonic determinants and unique continuation}
\fancyhead[LE,RO]{\thepage}

\DeclareMathOperator{\adj}{adj}
\DeclareMathOperator{\im}{Im}
\DeclareMathOperator{\re}{Re}

\theoremstyle{plain}
\newtheorem{theorem}{Theorem}[section]

\newtheorem{lemma}[theorem]{Lemma}
\newtheorem{rem}[theorem]{Remark}
\newtheorem{prop}[theorem]{Proposition}

\newtheorem{que}[theorem]{Question}

\numberwithin{equation}{section}

\newcommand{\Lapl}{\mathcal{L}}
\renewcommand{\div}{\text{div}}

\begin{document}

\title{Harmonic determinants and unique continuation}
\author[M. Ceki\'{c}]{Mihajlo Ceki\'{c}}
\address{Max-Planck Institute for Mathematics, Vivatsgasse 7, 53111, Bonn, Germany}
\email{m.cekic@mpim-bonn.mpg.de}

\begin{abstract}
We give partial answers to the following question: if $F$ is an $m$ by $m$ matrix on $\mathbb{R}^n$ satisfying a second order linear elliptic equation, does $\det F$ satisfy the strong unique continuation property? We give counterexamples in the case when the operator is a general non-diagonal operator and also for some diagonal operators. Positive results are obtained when $n = 1$ and any $m$, when $n = 2$ for the Laplace-Beltrami operator and also twisted with a Yang-Mills connection. Reductions to special cases when $n = 2$ are obtained. The last section considers an application to the Calder\'on problem in 2D based on recent techniques.
\end{abstract}

\maketitle

\section{Introduction}\label{sec1}
The strong unique continuation property (SUCP) for second order elliptic equations with smooth coefficients is well-known. It asserts that a solution vanishing to infinite order at a point must entirely vanish; on the other hand the weak unique continuation principle (WUCP) asserts that a function vanishing on an open subset, must vanish entirely. Clearly SUCP implies WUCP. There are a few known approaches: by Carleman estimates (see \cite{KT07} for a survey) and the frequency method (see \cite{GL} for this approach). It is not difficult to see from this that elliptic systems with diagonal principal part also satisfy the SUCP (see e.g. \cite{cek1}). One reason to be interested in this property is that the zero sets of such solutions have a suitable structure: they are countably $(n-1)$-rectifiable \cite{bar}, i.e. covered by a countable union of codimension one smooth submanifolds.

Consider a domain $\Omega \subset \mathbb{R}^n$ equipped with a positive definite (uniformly) $n$ by $n$ matrix function $a^{ij}$. Suppose $F: \Omega \to \mathbb{C}^{m \times m}$ is a solution to 
\begin{equation}\label{eqn}
PF(x) = -\partial_i\big(a^{ij}\partial_jF\big)(x) + L(x, F(x), dF(x)) = 0
\end{equation}
for $x \in \Omega$, where $L$ is a smooth matrix function, linear in $F$ and $dF$ entries. We will sometimes write $(\Omega, g) \subset \mathbb{R}^n$ when $a^{ij}$ comes from a Riemannian metric $g$ on $\Omega$, so that $a^{ij} = \frac{g^{ij}}{|g|}$ represents the Laplace-Beltrami operator, where $|g| = \sqrt{\det g}$. We address the question:

\begin{que}\label{mainque}
Does the SUCP hold for $\det F$, where $F$ satisfies \eqref{eqn}? If not, does the WUCP hold?
\end{que} 
Here are a few starting remarks -- firstly, in \cite{cek1} we notice that if $g$ is analytic and so are the coefficients of $L$, then by the classical theory so are the entries of $F$ and consequently, so is $\det F$ and the SUCP holds. Secondly, the obvious approach to produce an elliptic equation that $\det F$ satisfies does not seem to work (if we compute $\Delta_g \det F$ we obtain a function of $F$ and $dF$).

Some further motivation is also due. Except that this problem is a natural one to consider when studying systems, the author is motivated by the case of the connection Laplacian $P = d_A^*d_A$, where $d_A = d + A$ is a covariant derivative, $A$ is the $m \times m$ connection matrix of $1$-forms and $d_A^*$ is the formal adjoint of $d_A$ in the natural inner products. For, this problem appeared to be one of the crucial ones when studying the inverse problem of Calder\'on for Yang-Mills connections \cite{cek1} -- there, the gauge relating two connections $A$ and $B$ which have the same local Dirichlet-to-Neumann map was shown to be $H = FG^{-1}$ where $d_A^*d_AF = d_B^*d_B G = 0$ if $m = 1$ or for any $m$ if the metric is analytic. The tactic is to use unique continuation near the boundary and to analyse the zero set of $\det G$ to further extend $H$ smoothly inside the manifold. So the unique continuation property for $\det G$ for $m > 1$ comes into focus.

We propose a few approaches to this problem. In 2D we may use a set of special coordinates which reduce us to the case of special matrix $a^{ij}$; then by quotienting out one entry we are further reduced to the case where one of the entries is equal to $1$. Another simple technique is to compare the leading order Taylor coefficients of the entries, which we employ in the case $m = 2$. We also give several negative results for non-diagonal systems and some for diagonal systems; most of them are based on the simple observation that a PDE can be viewed as an equation for its coefficients.

Unless otherwise stated, the coefficients of the equations and the solutions are assumed to be in $C^\infty$. In the following Theorem, we summarise the positive and negative results for the SUCP for $\det F$ that are proven in this paper. As far as I know, this is the first time someone considered this problem and so the results are new in this sense.

\begin{theorem}\label{mainthm'}
The following table summarises the answers to Question \ref{mainque} that were proved in this paper for varying operator $P$, $m$ and $n$ and includes some open cases:
\begin{center}
    \begin{tabular}{  l | l | l | p{6cm} }
    Operator $P =$ & $m$ & $n$ & SUCP: Yes, No or Unknown? \\ \hline
    $\frac{d^2}{dt^2}\times Id + \begin{pmatrix}
0 & 0\\
-\frac{d^2c}{dt^2} \frac{d}{dt} & 0
\end{pmatrix}$ & $m \geq 2$ & $n = 1$ & No (counterexample to WUCP).\\ \hline
    $\Delta_g \times Id + \begin{pmatrix}
X_{11}^y \partial_y & X_{12}\\
\partial_x & X_{22}
\end{pmatrix}$ & $m \geq 2$ & $n \geq 1$ & No (counterexample to WUCP).\\ \hline
    $a\partial_1^2 + b\partial_2^2 + c\partial_1 + d\partial_2$ & $m \geq 2$ & $n \geq 2$ & No (counterexample to WUCP). \\ \hline
    $-\partial_i(a^{ij}\partial_j + b^i)$ & $m \geq 2$ & $n \geq 2$ & No (counterexample to WUCP). \\ \hline
    Analytic coefficients & $m \geq 1$ & $n \geq 1$ & Yes. \\ \hline
    $\Delta_g \times Id$ & $m \geq 1$ & $n = 2$ & Yes. \\ \hline
    $d_A^*d_A$ (for $A$ Yang-Mills) & $m \geq 1$ & $n = 2$ & Yes. \\ \hline
    $\frac{d^2}{dt^2} + a\frac{d}{dt} + b$ & $m \geq 1$ & $n = 1$ & Yes. \\ \hline
    $\Delta_g \times Id$ & $m \geq 2$ & $n \geq 3$ & Unknown. \\ \hline
    $\partial_i(a^{ij}\partial_j) \times Id$ & $m = 2$ & $n = 2$ & Unknown if $\det A \neq 1$ or $A \neq A^T$. \\ \hline
    \end{tabular}
\end{center}
\end{theorem}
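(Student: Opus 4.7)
The plan is to prove each row of the table as a separate statement, since the theorem is a tabular compilation of the paper's main results. I would partition the rows into the negative results (the first four) and the positive results (the next four), and treat the open rows as genuinely open.

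For the four negative results, the strategy is to construct explicit matrices $F$ such that $\det F$ vanishes on a non-empty open set of $\Omega$ but is not identically zero, thereby violating even WUCP. The natural ansatz is a lower-triangular (or block lower-triangular) $F$ whose diagonal entries are standard flat-but-nonzero scalar bumps; $\det F$ is then the product of the diagonals and is controllable. The off-diagonal entries serve as free parameters to absorb the constraint $PF = 0$. For rows 3 and 4, where the coefficients $a, b, c, d, a^{ij}, b^i$ are unconstrained, the key observation -- already flagged in the introduction -- is that $PF = 0$ can be \emph{reversed} and viewed as an algebraic-differential system in the unknown coefficients with $F$ prescribed; as long as the leading symbol condition can be made elliptic and the system is solvable, this yields counterexamples. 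Rows 1 and 2 are subtler because the operator is partially fixed: here one needs a compatible lower-triangular ansatz tailored to the specific shape of the zeroth- and first-order part, and the hard part is checking that the prescribed $\det F$ with a flat piece is compatible with the rigidity of the fixed entries.

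For the positive results, the approaches diverge. The analytic-coefficients row is essentially free: elliptic regularity with analytic data implies $F$, hence $\det F$, is real analytic, and SUCP is immediate. The 1D row reduces to a scalar linear ODE of order two with matrix-valued unknown; the columns of $F$ all lie in the $2$-dimensional (per column) solution space, so linear dependence on a set with an accumulation point propagates globally by ODE uniqueness, which forces $\det F$ to vanish identically. For the two $n=2$ rows, the plan articulated in the introduction is to pass to isothermal coordinates so that $a^{ij}$ is a scalar multiple of the identity (reducing $\Delta_g$ to $\Delta$ up to a conformal factor), and then to quotient by a distinguished entry of $F$ to normalise one entry to $1$; one then compares the leading order Taylor coefficients of the remaining entries at a putative zero of infinite order of $\det F$ and derives a contradiction from ellipticity in dimension two. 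For the Yang-Mills row, the extra step is to exploit the Yang-Mills equation to perform a gauge change that converts $d_A^* d_A$ into a form to which the Laplace-Beltrami argument applies.

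The main obstacle, unsurprisingly, will be the $n = 2$, $\Delta_g$ positive case. The subtlety is that even once one fixes isothermal coordinates and normalises an entry to $1$, the remaining entries of $F$ can individually vanish to infinite order in correlated ways that in principle could make $\det F$ vanish to infinite order without any single algebraic collapse; ruling out such conspiracies requires the precise rigidity of two-dimensional elliptic PDE together with careful bookkeeping of vanishing orders. The techniques reduce the problem to simpler subcases (one entry normalised to $1$, then further to determinant $1$ or symmetric $A$), and the honestly open rows at the bottom of the table reflect precisely the residual cases where this reduction does not close the argument.
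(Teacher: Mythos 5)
Your overall partition and most of the negative rows track the paper's strategy: rows 1--2 are handled by a triangular ansatz $F=\begin{pmatrix}1 & b\\ 0 & c\end{pmatrix}$ with $\det F=c$ a flat or compactly supported function and the first-order part of $P$ built from $c$ (Theorem \ref{cex}, Theorem \ref{cex2}, Proposition \ref{cexn=1}), and rows 3--4 use exactly the ``reverse the PDE into a pointwise linear system for the coefficients'' idea you describe. Two caveats there: for rows 3--4 the matrix is \emph{not} triangular with flat diagonal bumps --- it is $F=\begin{pmatrix}f_3 & f_2\\ f_1 & 1\end{pmatrix}$ with $f_3=f_1f_2$ on a ball but not outside it, so $\det F$ vanishes on an open set for structural reasons, and the second-order coefficients are essential (with only first/zeroth-order coefficients the relation $f_3=f_1f_2$ forces a degenerate linear system). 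For the divergence row you also omit the multiplier reduction (Lemma \ref{reductionlemma}): one must solve the adjoint equation for a positive $\psi$ and divide by it to convert the non-divergence counterexample into divergence form with a $b^i$ term.

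There are two genuine gaps on the positive side. First, the 1D row: vanishing of $\det F$ at points --- even on a set with an accumulation point, let alone to infinite order at a single point as SUCP requires --- does \emph{not} make the columns of $F$ linearly dependent as elements of the ODE solution space, so ``propagation by ODE uniqueness'' has nothing to propagate. The paper's argument is different and is needed: every scalar entry is a linear combination of the two fundamental solutions $f_1, f_2$, hence $\det F=p(f_1,f_2)$ for a homogeneous polynomial $p$ of degree $m$, and vanishing of $\det F$ to order $m+1$ at one point forces $p\equiv 0$. Second, the $\Delta_g$, $n=2$ row: you anticipate a delicate Taylor-coefficient bookkeeping after normalising an entry to $1$, but the decisive observation (Theorem \ref{isothermal2dsucp}) is that in isothermal coordinates each entry is Euclidean-harmonic, hence real-analytic, so $\det F$ is real-analytic and this row collapses to the analytic row; the same mechanism (Coulomb gauge making $A$, hence $F$, analytic) closes the Yang--Mills row. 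The Taylor-comparison machinery you propose is what the paper deploys only for the strictly harder non-analytic divergence-form cases (Propositions \ref{reductionf22=1}, \ref{SUCP1}, \ref{mainreduction}), it is carried out there essentially only for $m=2$, and those cases are precisely the ones left open in the last rows of the table --- so as a route to the general-$m$ statement in row 6 it would not close.
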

\vspace{3mm}

We expect the last two SUCP properties in the table above to be false, but it seems difficult to construct direct counterexamples and we do not have a proof of this fact.

Next, we use the SUCP result for the operator $P = d_A^*d_A$ in the following application to the Calder\'on problem for connections, by using the techniques from \cite{cek1}. As explained above, the zero set of $\det G$ is then countably $(n - 1)$-rectifiable and we may re-run the proof of Theorem 1.2. in \cite{cek1}. There are slight complications near the zero set, since the order of degeneracy of $\det G$ can be high, but we work around this by going to a harmonic coordinate system near such a point.

Before stating the theorem, let us briefly recall the definition of a Yang-Mills connection, which are connections important in physics and geometry -- see \cite{DK} for more details. A unitary connection $A$ on a Hermitian vector bundle $E$ over a Riemannian manifold $(M, g)$ is called \emph{Yang-Mills} if it is the critical point of the Yang-Mills functional $F_{YM}$:
\begin{align}\label{YMeqn}
F_{YM}(A) = \int_M |F_A|^2 dvol_g
\end{align}
where $F_A = dA + A\wedge A$ is (locally) the curvature two form. Alternatively, it satisfies the equation
\begin{align}\label{YMeqn}
D_A^*F_A = 0
\end{align}
where $D_A S = dS + [A, S]$ is the induced covariant derivative on the endomorphism bundle End$E$.

\begin{theorem}\label{mainthm}
Let $(M, g)$ be a compact smooth $2$-dimensional Riemannian manifold with non-empty boundary and let $A$ and $B$ be two Yang-Mills connections over $M \times \mathbb{C}^m$ (for $m \in \mathbb{N}$). Further, let $\Gamma \subset \partial M$ be a non-empty open subset of the boundary. Then $\big(\Lambda_A f\big)|_{\Gamma} = \big(\Lambda_B f\big)|_{\Gamma}$\footnote{In this setting, recall that the \emph{Dirichlet-to-Neumann map} $\Lambda_A$ is defined by applying the covariant normal derivative at the boundary to the solution of the corresponding Dirichlet problem $\Lambda_A(f) = d_A(u)(\nu)|_{\partial M}$, where $\nu$ is the outer normal to $\partial M$ and $u$ solves the Dirichlet problem $d_A^*d_A u = 0$ with $u|_{\partial M} = f$.} for all $f \in C_0^\infty(\Gamma, \mathbb{C}^m)$ implies the existence of a unitary matrix function $H \in C^\infty(M, \mathbb{C}^{m \times m})$ with $H|_{\Gamma} = Id$ and $H^*A = B$.
\end{theorem}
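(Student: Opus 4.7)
The strategy follows the proof of Theorem 1.2 in \cite{cek1}, with the new SUCP result of Theorem \ref{mainthm'} (for $P = d_A^*d_A$ with $A$ Yang-Mills in dimension two) replacing the analyticity hypothesis needed there. First, I would use the equality $\Lambda_A|_\Gamma = \Lambda_B|_\Gamma$ together with boundary determination of $A,B$ on $\Gamma$ to construct globally defined matrix solutions $F, G \in C^\infty(M, \mathbb{C}^{m\times m})$ of $d_A^* d_A F = 0$ and $d_B^* d_B G = 0$ with matching Dirichlet data equal to the identity on $\Gamma$. By standard boundary arguments (as in \cite{cek1}), the candidate gauge $H := F G^{-1}$ is then smooth and unitary in a one-sided collar of $\Gamma$, equals the identity on $\Gamma$, and satisfies the gauge relation $H^* A = B$ there; equivalently, the first-order overdetermined system $dH + AH - HB = 0$ holds in this collar.

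Since $B$ is Yang-Mills on a $2$-manifold, Theorem \ref{mainthm'} applies to $G$, so $\det G$ satisfies the SUCP. Because $\det G \equiv 1$ on $\Gamma$, it is not identically zero on $M$, hence its zero set $Z \subset M$ consists of points of finite vanishing order. The structure theorem of \cite{bar} then gives that $Z$ is countably $1$-rectifiable, and in particular nowhere dense in $M$. On each open component of $M \setminus Z$ the candidate $H = F G^{-1}$ is smooth, and weak unique continuation for the first-order elliptic system $dH + AH - HB = 0$ propagates the relation $H^*A = B$ and the unitarity $H^*H = I$ from the collar of $\Gamma$ throughout the component containing that collar.

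The main step is the smooth extension of $H$ across $Z$. At a point $p \in Z$ where $\det G$ vanishes to order $k \geq 1$, I would pass to a harmonic coordinate chart centered at $p$; in such a chart $d_B^* d_B$ has an especially clean principal part, and one can analyse the leading Taylor expansions of $F$ and $G$ at $p$. The unitarity of $H$ on $M \setminus Z$ forces the product $F \cdot \adj G$ to vanish at $p$ to exactly order $k$, so that $H = (F \cdot \adj G)/\det G$ extends continuously across $p$; elliptic regularity for $dH = HB - AH$ then upgrades the extension to $C^\infty$. Iterating this across successive components of $M \setminus Z$, one obtains a global smooth unitary $H$ on $M$ with $H|_\Gamma = \mathrm{Id}$ and $H^*A = B$. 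The principal obstacle will be precisely this bookkeeping of leading Taylor coefficients at points of large vanishing order $k$: harmonic coordinates are essential, as they reduce the local structure of $G$ near $p$ to that of a solution of a model elliptic system with nearly constant leading symbol, for which the matching of the vanishing orders of $F \cdot \adj G$ and $\det G$ can be verified directly.
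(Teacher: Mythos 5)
Your skeleton matches the paper's: reduce to matrix solutions $F,G$ of $d_A^*d_AF = d_B^*d_BG = 0$ with matching data on $\Gamma$, use the Yang--Mills SUCP to see that the zero set of $\det G$ is countably $1$-rectifiable, and extend $H = F\adj G/\det G$ across that set. The smooth extension step is also essentially right, though your mechanism is slightly off: what one actually uses is not that $F\adj G$ vanishes ``to exactly order $k$'' at $p$, but that $H$ is smooth and bounded on the side where $H^*A=B$ is already known, so that in normal coordinates to the zero curve $C_i$ (where $\det G = y^kg_1$ with $g_1(p)\neq 0$) Taylor's theorem yields $F\adj G = y^kH_1$ with $H_1$ smooth, whence $H=H_1/g_1$ extends smoothly to $y<0$.

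The genuine gap is the step after that: having extended $H$ smoothly across $C_i$, you must still show that $H^*A=B$ holds on the far side. Weak unique continuation ``throughout the component containing the collar'' cannot do this, because the components of $M\setminus Z$ are separated precisely by $Z$; and a smooth extension of $H$ carries no reason to satisfy $dH = HB-AH$ where that relation has not yet been established --- this is an overdetermined Pfaffian system whose validity on the far side is exactly the point at issue, and neither continuity of $H$ nor elliptic regularity supplies it. The paper's mechanism is analyticity: in the Coulomb gauge and isothermal coordinates the Yang--Mills connections $A'$, $B'$ are real-analytic (the content of the proof of Lemma \ref{YMlemma}), and one shows that $H'$ itself is analytic near $p$, so that $H'^*A'=B'$ propagates across $\gamma=\varphi(C_i)$ by analytic continuation. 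This is where the real technical work sits: the normal coordinate $y$ is a Riemannian distance and only smooth, so one factors $\big(d(q,\gamma)\big)^k$ as $\big(d^2(q,\gamma)/d^2_{\mathrm{eucl}}(q,\gamma)\big)^{k/2}\big(d^2_{\mathrm{eucl}}(q,\gamma)\big)^{k/2}$, uses that $\gamma$ is an analytic curve (zero set of the analytic $\det G'$) so that $d^2_{\mathrm{eucl}}(q,\gamma)$ is analytic, and invokes Lemma \ref{geomlemma} to see that the ratio extends smoothly and cancels between numerator and denominator, leaving $H'$ analytic. Without a substitute for this analytic-continuation argument, your proof does not get the gauge relation past the first zero curve.
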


Note that for $\Gamma = \partial M$, i.e. full data, the above Theorem follows from the work in \cite{AGTU}, which recovers a general matrix potential and the connection on an arbitrary vector bundle up to gauges with a different technique based on the Complex Geometric Optics (CGO) solutions. One advantage of Theorem \ref{mainthm} is that it holds for partial data. Also, it extends the new technique of \cite{cek1} based on analysing the zero set, which gives hope this technique can be extended to more general contexts.

Finally, we note there is a different, but related variation of Quesion \ref{mainque} where one considers the Jacobian of a system and its zero set. As observed in \cite{B}, this is of some importance in hybrid inverse problems. For example, in \cite{AN01}, in 2D, the authors consider the Jacobian $J = \det DU$ formed by solutions to $\div A\nabla u_i = 0$ for $i = 1, 2$ (these are also called $A$-harmonic functions -- see Section \ref{sec5}), where $U = (u_1, u_2)^T$. They state conditions on the boundary values of $U$ under which an $A$-harmonic extension of $U$ to the domain is univalent (injective) and provide local bounds on $\log J$.\footnote{Interestingly, they derive an elliptic equation for $J$ in this case. This seems unavailable for our problem.} See references in \cite{AN01, B} for more about this problem and its applications (also in higher dimensions).

The paper is organised as follows. In Section \ref{sec2} we consider counterexamples in the non-diagonal case and also to the general diagonal case, as stated in Theorem \ref{mainthm'}. In Section \ref{sec3} we consider positive results in 1D. In Section \ref{sec4} we consider the $n = 2$ case in more detail. More precisely, we prove a few positive results, including the case of $P = \Delta_g$ and arbitrary $m$, see Theorem \ref{isothermal2dsucp}; we also prove a slightly more general result for $m = 2$. Furthermore, we reduce the problem to a simpler form for $m = 2$ by using properties of harmonic polynomials in 2D and a reduction lemma: see Proposition \ref{mainreduction}. Some further reductions in 2D are given in Section \ref{sec5}, based on the theory of quasiconformal maps. In Section \ref{sec6} we prove a positive result in two dimensions for the connection Laplacian operator twisted with a Yang-Mills connection. Finally, in Section \ref{sec7} we consider an application to the Calder\'on problem in two dimensions, based on the recent techniques in \cite{cek1}. In Appendix \ref{secapp} we prove a simple geometric lemma and a result on products of harmonic polynomials in two dimensions that we need.

\vspace{4mm}
\textbf{Acknowledgements.} I would like to thank Herbert Koch for helpful discussions and to Gabriel Paternain for useful comments. Also, the author thanks the Max-Planck Institute for Mathematics for financial support.

\section{Negative results}\label{sec2}

We start with the negative results and by showing what we cannot expect to hold.

\begin{theorem}[Counterexample]\label{cex}
Assume $g = g_{eucl}$ is the Euclidean metric and $0 \in \Omega \subset \mathbb{R}^2$. Let $c: \Omega \to \mathbb{R}$ be a smooth function to be specified later. Define
\begin{equation}
X = \begin{pmatrix}
X_{11}^y \partial_y & 0\\
\partial_x & 0
\end{pmatrix}
\end{equation}
be a first order matrix derivative, where 
\begin{equation}\label{X11'}
X_{11}^y(x, y) = \frac{\partial_x \Delta_g c + \int_0^x \partial_y^2 \Delta_g c(t, y) dt}{1 - \int_0^x \partial_y \Delta_g c(t, y) dt}
\end{equation}
Moreover, define
\begin{equation}
b(x, y) = y - \int_0^x \Delta_g c(t, y) dt
\end{equation}
and let
\begin{equation}\label{Fdef}
F := \begin{pmatrix}
1 & b\\
0 & c
\end{pmatrix}
\end{equation}
Then $F$ satisfies (here $X$ acts by matrix multiplication)
\begin{equation}\label{Feqn}
\Delta_g F + XF = 0
\end{equation}
and also $\det F = c$. So, by allowing $c = e^{-\frac{1}{|x|^2}}$ (vanishes to infinite order at zero) or letting $c$ to be a bump function equal zero in a neighbourhood of zero, we obtain respectively a counterexample to the SUCP and the WUCP.
\end{theorem}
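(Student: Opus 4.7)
The plan is a direct verification of the two claims. The determinant claim is immediate: $\det F = 1\cdot c - 0 \cdot b = c$, so if $c = e^{-1/|x|^2}$ then $\det F$ vanishes to infinite order at the origin without being identically zero (an SUCP counterexample), and if $c$ is a bump that vanishes near the origin then $\det F$ vanishes on an open set without being identically zero (a WUCP counterexample). Note also that $F$ itself is not a pathological object: $F_{11}=1$ forces $F$ to be pointwise invertible in a neighbourhood of any point where $c \neq 0$, so the counterexample genuinely probes $\det F$ and not $F$.

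All the content therefore lies in checking $\Delta_g F + XF = 0$ entrywise. Since $X$ acts by matrix multiplication, $(XF)_{ij} = \sum_k X_{ik} F_{kj}$, and the identity splits into four scalar equations. The first column of $F$ is $(1,0)^T$, which is harmonic and is annihilated by both $X_{11}^y\partial_y$ and $\partial_x$, so the $(1,1)$ and $(2,1)$ entries hold automatically. For the $(2,2)$ entry the equation reads $\Delta_g c + \partial_x b = 0$; this is immediate from differentiating $b(x,y) = y - \int_0^x \Delta_g c(t,y)\,dt$ once in $x$, which gives $\partial_x b = -\Delta_g c$.

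The only substantive entry is $(1,2)$, where the equation reads $\Delta_g b + X_{11}^y\,\partial_y b = 0$, and I would simply solve it for $X_{11}^y = -\Delta_g b / \partial_y b$. Differentiating $b$ gives $\partial_y b = 1 - \int_0^x \partial_y \Delta_g c(t,y)\,dt$, matching the denominator in the prescribed formula \eqref{X11'}, and a further computation yields $\Delta_g b = \partial_x^2 b + \partial_y^2 b = -\partial_x \Delta_g c - \int_0^x \partial_y^2 \Delta_g c(t,y)\,dt$, whose negative matches the numerator. Hence the $X_{11}^y$ defined in \eqref{X11'} is exactly what the $(1,2)$ equation forces, and is in that sense not an ad hoc guess.

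The only technical point to flag is well-definedness: $X_{11}^y$ must be a smooth function on $\Omega$, which amounts to $\partial_y b \neq 0$. Since $\partial_y b(0,y) = 1$, continuity yields $\partial_y b > 1/2$ on some neighbourhood of the origin, so shrinking $\Omega$ if necessary keeps $X_{11}^y$ smooth there; this is not an obstacle, merely a remark on the size of the domain. Conceptually, the construction succeeds because the annihilation of the first column of $F$ by $X$ decouples the two columns of the matrix equation, leaving us free to choose $c$ with the desired vanishing behaviour and then to manufacture $b$ and $X_{11}^y$ so the second column satisfies the two remaining scalar identities.
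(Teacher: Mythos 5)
Your proposal is correct and is essentially the paper's own proof: a direct entrywise verification in which the first column is annihilated trivially, the $(2,2)$ entry follows from $\partial_x b = -\Delta_g c$, and the $(1,2)$ entry is exactly the equation that forces the stated formula for $X_{11}^y$, together with the same remark that $\partial_y b = 1 - \int_0^x \partial_y\Delta_g c\,dt$ is nonzero near the origin so $X_{11}^y$ is well defined there. The only discrepancy is a sign convention: you compute with $\Delta_g = \partial_x^2 + \partial_y^2$, which is what makes \eqref{X11'} come out with the stated sign (and is what the paper's own displayed verification implicitly uses), even though the paper declares $\Delta_g = -(\partial_x^2+\partial_y^2)$; under the latter convention $X_{11}^y$ would acquire an overall minus sign, which changes nothing of substance.
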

\begin{proof}
We have $\Delta_g = -(\partial_x^2 + \partial_y^2)$ and we are left to verify a simple computation. Note that $X_{11}^y$ in \eqref{X11'} is well defined in a neighbourhood of zero and near the zero set of $c$ in the second case. We can easily check that, from the definitions
\begin{align*}
\Delta_g b\; +& &X_{11}^y \partial_y b& &= 0 &\iff -\partial_x \Delta_g c - \int_0^x \partial_y^2 \Delta_g c + \frac{\partial_x \Delta_g c + \int_0^x \partial_y^2 \Delta_g c}{1 - \int_0^x \partial_y \Delta_g c} \cdot (1 - \int_0^x \partial_y  \Delta_g c) = 0\\
\Delta_g c\;+& &\partial_x b& &= 0 &\iff \Delta_gc - \Delta_g c = 0
\end{align*}
\end{proof}

This is one of the simplest counterexamples we could find. We can upgrade it to:

\begin{theorem}\label{cex2}
In the same setting as Theorem \ref{cex}, we let
\begin{equation}
X = \begin{pmatrix}
X_{11}^y \partial_y & X_{12}\\
\partial_x & X_{22}
\end{pmatrix}
\end{equation}
where $X_{12}$ and $X_{22}$ are smooth first order derivatives. Then by letting
\begin{equation}\label{X11}
X_{11}^y(x, y) = \frac{\partial_x (\Delta_g c + X_{22}c) - X_{12}c + \int_0^x \partial_y^2 \Delta_g c(t, y) dt}{1 - \int_0^x \partial_y \Delta_g c(t, y) dt}
\end{equation}
and
\begin{equation}
b(x, y) = y - \int_0^x \big(\Delta_g c(t, y) + X_{22}c(t, y)\big) dt
\end{equation}
we obtain the solution $F$ from \eqref{Fdef} satisfying equation \eqref{Feqn} and so we generalise the counterexample to this case.
\end{theorem}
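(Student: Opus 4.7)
The plan is to mimic the proof of Theorem \ref{cex} essentially verbatim, since the matrix $F$ from \eqref{Fdef} has the same structure and only the two nontrivial entries of $\Delta_g F + XF$ need to be reconsidered. I would expand $XF$ using the rule $(XF)_{ij} = \sum_k X_{ik}F_{kj}$, with each $X_{ik}$ acting as a first-order differential operator on $F_{kj}$. Because $F_{11}\equiv 1$ and $F_{21}\equiv 0$, the entire left column of $XF$ is zero, as is the left column of $\Delta_g F$; so the $(1,1)$ and $(2,1)$ entries of \eqref{Feqn} are automatic.

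The right column reduces to two scalar equations. The $(2,2)$ entry reads
$$\Delta_g c + \partial_x b + X_{22}c = 0,$$
which, integrated in $x$ from $0$ with initial condition $b(0,y)=y$, yields exactly the $b$ defined in the statement, so this entry holds by construction. The $(1,2)$ entry reads
$$\Delta_g b + X_{11}^y \partial_y b + X_{12}c = 0,$$
which I would solve algebraically for $X_{11}^y$ as
$$X_{11}^y = -\frac{\Delta_g b + X_{12}c}{\partial_y b}.$$
Plugging in $\partial_x^2 b = -\partial_x(\Delta_g c + X_{22}c)$, $\partial_y^2 b = -\int_0^x \partial_y^2(\Delta_g c + X_{22}c)\,dt$, and $\partial_y b = 1 - \int_0^x \partial_y(\Delta_g c + X_{22}c)\,dt$ gives, via $\Delta_g b = -(\partial_x^2 + \partial_y^2)b$, the expression in \eqref{X11} after collecting terms.

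There is no genuine obstacle here: the construction is a direct extension of Theorem \ref{cex}, and the well-definedness discussion is identical — at $x=0$ the denominator $1 - \int_0^x \partial_y \Delta_g c\, dt$ equals $1$, so $X_{11}^y$ is smooth near the origin and also near the zero set of $c$, which is exactly where we need it for the SUCP and WUCP counterexamples. Since the form of $F$ is unchanged, $\det F = c$ still, and the same choices $c = e^{-1/|x|^2}$ or $c$ a bump supported away from $0$ produce the claimed counterexamples. The only bookkeeping to watch is the correct placement of the $X_{12}c$ and $X_{22}c$ contributions in the numerator and in the integrals, and the sign convention $\Delta_g = -(\partial_x^2 + \partial_y^2)$.
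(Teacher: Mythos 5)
Your proposal is correct and takes essentially the same route the paper intends: the left column of \eqref{Feqn} is trivially zero, the $(2,2)$ entry is solved by integrating $\partial_x b = -(\Delta_g c + X_{22}c)$ with $b(0,y)=y$, and the $(1,2)$ entry is solved algebraically for $X_{11}^y = -(\Delta_g b + X_{12}c)/\partial_y b$. One caveat: if you carry out your own substitutions honestly, $\partial_y b = 1 - \int_0^x \partial_y(\Delta_g c + X_{22}c)\,dt$ and $\partial_y^2 b = -\int_0^x \partial_y^2(\Delta_g c + X_{22}c)\,dt$, so the integrands in the resulting formula for $X_{11}^y$ contain $\Delta_g c + X_{22}c$ rather than the bare $\Delta_g c$ printed in \eqref{X11}, and the overall sign of the numerator depends on the same sign convention ambiguity already present in the displayed verification of Theorem \ref{cex}; your derivation is the correct one, and you should flag that it yields a corrected version of \eqref{X11} rather than claiming it reproduces that formula verbatim.
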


\begin{rem}\rm
Note that Theorem \ref{cex2} provides us in particular with a counterexample to SUCP and WUCP for $X$ symmetric (Hermitian) or anti-symmetric (skew-Hermitian). This is relevant for the twisted Laplacian operator which is of the form 
\begin{equation}
d_A^*d_A = \Delta_g - 2g^{ij} A_i \partial_j + d^*A - g^{ij}A_iA_j
\end{equation}
Here $A = A_i dx^i$ is the connection one form, $g^{ij}$ is the inverse of the metric matrix $g_{ij}$ and $d^*$ is the co-differential. If the connection $A$ is unitary, then $A_i$ is skew-Hermitian. What the previous theorem is telling us is that we should not expect the SUCP to hold for $d_A^*d_A$ for $n \geq 2$ and general $A$. The fact that for $A$ Yang-Mills and $n = 2$ (c.f. Section \ref{sec6}) we have SUCP is due to the special analytical properties in suitable gauges in 2D, so we do not expect the SUCP to hold even for Yang-Mills connections and $n \geq 3$, but this remains open.
\end{rem}

In the similar vein as the counterexamples above, we give a simple counterexample in the $1$-dimensional case. More precisely, we have:

\begin{prop}\label{cexn=1} Let us define the smooth matrix function, for a smooth $c: \mathbb{R} \to \mathbb{R}$
\[F(t) = \begin{pmatrix}
1 & t\\
0 & c(t)
\end{pmatrix}\]
Furthermore, let us define the first order smooth matrix derivative
\[X (t)= \begin{pmatrix}
0 & 0\\
-\frac{d^2c}{dt^2} \frac{d}{dt} & 0
\end{pmatrix}\]
Then $F$ satisfies $\frac{d^2 F}{dt^2} + XF = 0$ and we have $\det F = c$. By letting $c$ to be an infinitely vanishing function at zero and a bump function equal vanishing near zero, we obtain counterexamples to the SUCP and WUCP, respectively.
\end{prop}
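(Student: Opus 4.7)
The claim is a direct matrix computation, essentially the one-dimensional shadow of the construction in Theorem~\ref{cex}. The plan is simply to verify $\frac{d^2F}{dt^2} + XF = 0$ entry by entry, and then read off $\det F$. Since the entries $1$, $t$, $0$ in the first row and first column of $F$ are at most linear in $t$, only the $(2,2)$ slot survives two differentiations, giving
\[
\frac{d^2F}{dt^2} = \begin{pmatrix} 0 & 0 \\ 0 & c''(t) \end{pmatrix}.
\]

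Next I would interpret $XF$ as the formal matrix product in which the operator-valued entries of $X$ act on the corresponding column entries of $F$. Only the $(2,1)$ entry of $X$ is nonzero, so $(XF)_{21} = -c''\,\frac{d}{dt}(1) = 0$ and $(XF)_{22} = -c''\,\frac{d}{dt}(t) + 0 \cdot c = -c''$, while the first row vanishes trivially. Hence
\[
XF = \begin{pmatrix} 0 & 0 \\ 0 & -c''(t) \end{pmatrix},
\]
and adding the two matrices gives $\frac{d^2F}{dt^2} + XF = 0$. Expanding the $2\times 2$ determinant yields $\det F = 1\cdot c - t\cdot 0 = c$.

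For the counterexamples I would specialise $c$. Taking $c(t) = e^{-1/t^2}$ (extended by $0$ at the origin) produces a smooth $c$ vanishing to infinite order at $0$ but not identically zero, while the coefficient $-c''$ appearing in $X$ remains smooth; this gives the SUCP counterexample. Taking $c$ to be a smooth bump supported away from $0$ yields a smooth $F$ solving the same equation with $\det F$ identically zero near $0$ but nontrivial elsewhere, giving the WUCP counterexample.

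There is essentially no obstacle here: the operator $X$ has been engineered precisely to cancel the only nonzero entry of $F''$. The one point to note is that $X$ depends smoothly on $t$ whenever $c$ does, since it involves only $c''$ (no division by $c$ or similar), so no regularity or singularity issue arises — this is why the one-dimensional construction is so much cleaner than the two-dimensional versions of Theorems~\ref{cex} and \ref{cex2}.
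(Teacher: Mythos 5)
Your verification is correct and is exactly the computation the paper has in mind (its proof is simply ``immediate from the construction''): $F''$ and $XF$ each have $\pm c''$ as their only nonzero entry, they cancel, and $\det F = c$ with the standard choices of $c$ giving the SUCP and WUCP counterexamples. Your closing remark about the smoothness of $X$ (no division by $c$, unlike the 2D constructions) is a correct and worthwhile observation.
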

\begin{proof}
Immediate from the construction.
\end{proof}

The next counterexample rules out even \emph{diagonal} operators in dimension $4$. It is based on the simple idea that a solution to a PDE can be viewed as an equation in the coefficients and some linear algebra. The more coefficients we have, the more space we have to prescribe the solutions and then determine the coefficients -- this is why dimension $4$ is useful.

\begin{theorem}[Counterexample in the diagonal case in 4D]\label{cexn=4}
There exist an $\varepsilon > 0$ and smooth, positive and real coefficient functions $a, b, c, d$ on $B_\varepsilon$ and smooth functions $f_1, f_2, f_3$ on $B_{2\varepsilon}$, such that for $\Lapl: = (a \partial_1^2 + b\partial_2^2 + c\partial_3^2 + d\partial_4^2)$, we have
\begin{align}\label{eqncexn=4}
\Lapl f_i = (a \partial_1^2 + b\partial_2^2 + c\partial_3^2 + d\partial_4^2)f_i = 0
\end{align}
for $i = 1, 2, 3$. Also, we have $f_1 f_2 = f_3$ on $B_{\varepsilon}$, but $f_1 f_2 \neq f_3$ on $B_{2\varepsilon} \setminus B_{\varepsilon}$.

Therefore, $F := \begin{pmatrix}
f_3 & f_2\\
f_1 & 1
\end{pmatrix}$ satisfies $\Lapl F = 0$ and $\det F = 0$ on $B_{\varepsilon}$, but $\det F \neq 0$ on $B_{2\varepsilon} \setminus B_{\varepsilon}$; so the WUCP fails in this case.
\end{theorem}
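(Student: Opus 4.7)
The essential idea, as flagged in the paper, is to view the equations $\mathcal{L} f_i = 0$ for $i=1,2,3$ as three homogeneous linear equations in the four unknown coefficient functions $(a,b,c,d)$ at each point $x \in B_{2\varepsilon}$. With a $3 \times 4$ system one generically has a one-dimensional kernel, leaving us exactly enough slack to demand a particular sign pattern (here, positivity of all coefficients). This is precisely why dimension $n = 4$ is useful: with $n = 3$ the kernel would be $\{0\}$ generically, leaving no room for positivity.

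To reduce the problem drastically, I would first take $f_1(x) = x_1$ and $f_2(x) = x_2$. All pure second partial derivatives of these coordinate functions vanish identically, so $\mathcal{L} f_1 = \mathcal{L} f_2 = 0$ for any choice of coefficients. Then write $f_3(x) = x_1 x_2 + \psi(x)$, where $\psi$ is a smooth function on $B_{2\varepsilon}$ to be constructed, vanishing identically on $B_\varepsilon$ and nonzero somewhere in $B_{2\varepsilon} \setminus B_\varepsilon$; automatically $f_3 = f_1 f_2$ on $B_\varepsilon$ but $f_3 \neq f_1 f_2$ somewhere outside. Since the pure second partials of $x_1 x_2$ also vanish, the remaining condition $\mathcal{L} f_3 = 0$ reduces to the single pointwise equation
\[
a \, \psi_{11} + b \, \psi_{22} + c \, \psi_{33} + d \, \psi_{44} = 0 \quad \text{on } B_{2\varepsilon},
\]
which is automatically satisfied on $B_\varepsilon$ since $\psi \equiv 0$ there.

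For smooth positive $(a,b,c,d)$ to exist, the vector $(\psi_{11}, \psi_{22}, \psi_{33}, \psi_{44})$ must at each point either be zero or have both strictly positive and strictly negative entries; a one-parameter family of positive solutions then fills out a smooth cone in $\mathbb{R}^4$, and a smooth section is obtained via explicit formulas in terms of (appropriately signed) $3 \times 3$ minors of the second-derivative matrix. The construction of $\psi$ I have in mind is
\[
\psi(x) = \chi(|x|^2 - \varepsilon^2) \cdot q(x),
\]
where $\chi$ is a nonnegative smooth cutoff with $\chi(t) = 0$ for $t \leq 0$ and $\chi(t) > 0$ for $t > 0$, and $q$ is an indefinite quadratic form such as $x_1^2 - x_2^2 + x_3^2 - x_4^2$; the cutoff ensures $\psi \equiv 0$ on $B_\varepsilon$, and the indefiniteness of $q$ together with an appropriate $\chi$ produces the needed sign mixing in the second derivatives of $\psi$ outside $B_\varepsilon$. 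On $B_\varepsilon$ the coefficients may be taken to be any smooth positive functions (e.g.\ constants) and extended smoothly through $\partial B_\varepsilon$ to match the solution built outside.

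The main obstacle is ensuring the sign condition on $(\psi_{11}, \dots, \psi_{44})$ holds uniformly on $B_{2\varepsilon} \setminus B_\varepsilon$: near $\partial B_\varepsilon$, the cutoff $\chi$ and its derivatives $\chi', \chi''$ decay at different rates, and for an ill-chosen $\chi$ all four pure second derivatives of $\psi$ can align in the same sign in a narrow layer, preventing positivity. I expect to resolve this either (i) by tuning $\chi$ so that $\chi$ dominates $\chi'$ in the relevant regime, (ii) by localizing $\psi$ to a small bump supported strictly inside $B_{2\varepsilon} \setminus \bar{B}_\varepsilon$ and arguing directly on that bump that the second-derivative signature has mixed signs by continuity from a point where it clearly does, or (iii) by a more refined construction that trades explicitness for control of the alternating minor structure of the $3 \times 4$ system. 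Once sign mixing is verified, writing down the smooth positive coefficients is essentially a formal step.
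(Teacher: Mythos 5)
Your overall framing (three linear equations in four unknown coefficients, with the slack used to enforce positivity) matches the paper's idea, but your reduction to a single scalar equation breaks the argument at the decisive step. By taking $f_1 = x_1$, $f_2 = x_2$ you annihilate the first two rows of the $3\times 4$ second-derivative matrix, so everything rests on finding $\psi$, supported outside $B_\varepsilon$, such that at \emph{every} point the vector $(\psi_{11},\psi_{22},\psi_{33},\psi_{44})$ is either zero or has strictly mixed signs. Your explicit candidate $\psi = \chi(|x|^2-\varepsilon^2)\,q(x)$ fails: writing $\rho = |x|^2 - \varepsilon^2$ one has $\psi_{ii} = 4x_i^2\,q\,\chi''(\rho) + O(\chi'(\rho))$, and for any smooth cutoff vanishing exactly on $\{\rho\le 0\}$ the second derivative $\chi''$ dominates $\chi'$ and $\chi$ as $\rho\to 0^+$ (for $\chi(t)=e^{-1/t}$, $\chi''\sim t^{-4}e^{-1/t}\gg \chi'\sim t^{-2}e^{-1/t}$). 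Hence at generic points just outside $\partial B_\varepsilon$ (all $x_i\neq 0$, $q\neq 0$) all four $\psi_{ii}$ share the sign of $q\chi''$, and no positive $(a,b,c,d)$ exists there. Your proposed remedy (i) is provably impossible: $\chi'\le C\chi$ near $0^+$ would force $\log\chi$ to be bounded below, contradicting $\chi\to 0$. Remedy (ii) meets the identical boundary-layer obstruction at the edge of the bump's support, and continuity from one good interior point gives nothing uniform. Beyond the sign issue, even where pointwise positive solutions exist you still owe a construction of a single \emph{smooth} positive coefficient field across the locus where the constraint vector degenerates to zero on an open set; a $1\times 4$ system of non-constant rank does not automatically admit a smooth positive section.

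The paper sidesteps both problems by keeping all three equations active: it takes $g_1 = x^2-y^2+t+x$, $g_2 = x^2-z^2+t-x$, $g_3=g_1g_2$, chosen so that the $3\times 4$ matrix of pure second derivatives has full rank $3$ at the origin with kernel spanned by $(1,1,1,1)$. Full rank persists on a small ball, so the kernel is a smooth line field whose normalized representative stays uniformly close to $(1,1,1,1)$, hence positive; the modification of $f_3$ off $B_\varepsilon$ is then made small in $C^2$ so that full rank and positivity survive on all of $B_{2\varepsilon}$. If you want to salvage your route, you must either exhibit a $\psi$ with the required everywhere-mixed-sign Hessian diagonal together with an explicit smooth positive section (no standard cutoff construction will do), or revert to a choice of $f_1,f_2,f_3$ that keeps the $3\times4$ matrix at full rank so the kernel is rigid and the positivity is a stable, open condition.
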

\begin{proof}
Note that the equation \eqref{eqncexn=4} holds for $i = 1, 2, 3$ if and only if the following matrix equation holds:
\begin{align}\label{matrixeqn}
\begin{pmatrix}
\partial_1^2 f_1 & \partial_2^2 f_1 & \partial_3^2 f_1 & \partial_4^2 f_1\\
\partial_1^2 f_2 & \partial_2^2 f_2 & \partial_3^2 f_2 & \partial_4^2 f_2\\
\partial_1^2 f_3 & \partial_2^2 f_3 & \partial_3^2 f_3 & \partial_4^2 f_3
\end{pmatrix} \begin{pmatrix}
a\\
b\\
c\\
d
\end{pmatrix} = 0
\end{align}
at all points $p$ in the domain of definition. Note that this $3 \times 4$ matrix has nullity $\geq 1$ and so there is always a non-zero solution at each point $p$. Let us choose auxiliary functions 
\begin{align*}
g_1 = x^2 - y^2 + t + x, \quad g_2 = x^2 - z^2 + t - x \quad \text{ and } \quad g_3 = g_1g_2
\end{align*}
With this chose, we have full rank at $p = 0$ ($f_1 = g_1$, $f_2 = g_2$, $f_3 = g_3$) and so we have the non-zero kernel spanned with $a = b = c = d = 1$. Since the rank of the $3 \times 4$ matrix from \eqref{matrixeqn} must be full in a neighborhood of zero (determinant of a $3\times 3$ minor is non-zero), there exists and $\epsilon > 0$ such that on $B_\varepsilon$ we have a smooth choice of solutions to \eqref{matrixeqn} with $a(0) = b(0) = c(0) = d(0) = 1$ and for some small $\delta > 0$
\[\min_{B_\varepsilon}\{a, b, c, d\} \geq 1 - \delta\] 
Now choose smooth extensions $f_1, f_2, f_3$ to be such that they agree with $g_1, g_2, g_3$ on $B_\varepsilon$ and such that $f_1 f_2 \neq f_3$ on $B_{2\varepsilon} \setminus B_\varepsilon$ (e.g. multiply with a bump function)
\[\min_{B_{2\varepsilon}}\{a, b, c, d\} \geq \frac{1}{2}\]
This can be done for $\varepsilon$ and $\delta$ small enough and a good choice of extensions. This finishes our construction.
\end{proof}

Note that the above construction also gives a counterexample in any dimension $n \geq 4$ and the size of the matrix $m \geq 2$. The next Proposition tells us we can do slightly better by introducing off-diagonal terms in dimension $3$:

\begin{prop}[Counterexample in the diagonal case in 3D]
There exist an $\varepsilon > 0$ and smooth, positive and real coefficient functions $a, b, c, d$ on $B_{2\varepsilon}$ and smooth functions $f_1, f_2, f_3$ on $B_{2\varepsilon}$, such that the operator $\Lapl: = (a \partial_1^2 + b\partial_2^2 + c\partial_3^2 + 2d\partial_1\partial_2)$ is (strongly) elliptic and we have
\begin{align}\label{eqncexn=4}
\Lapl f_i = (a \partial_1^2 + b\partial_2^2 + c\partial_3^2 + 2d\partial_1 \partial_2)f_i = 0
\end{align}
for $i = 1, 2, 3$. Moreover, we have $f_1 f_2 = f_3$ on $B_{\varepsilon}$, but $f_1 f_2 \neq f_3$ on $B_{2\varepsilon} \setminus B_{\varepsilon}$.

Therefore, $F := \begin{pmatrix}
f_3 & f_2\\
f_1 & 1
\end{pmatrix}$ satisfies $\Lapl F = 0$ and $\det F = 0$ on $B_{\varepsilon}$, but $\det F \neq 0$ on $B_{2\varepsilon} \setminus B_{\varepsilon}$; so the WUCP fails in this case.
\end{prop}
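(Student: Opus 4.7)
The plan is to follow the template of the 4D counterexample (Theorem \ref{cexn=4}): view $\Lapl f_i = 0$ at each point as a $3 \times 4$ linear system $M(p)(a,b,c,d)^T = 0$, whose $i$-th row is $(\partial_1^2 f_i,\,\partial_2^2 f_i,\,\partial_3^2 f_i,\,2\partial_1\partial_2 f_i)$ for $i = 1, 2, 3$. The target is smooth $f_1, f_2, f_3$ for which $M(p)$ has rank $3$ throughout a small ball, with its one-dimensional kernel lying strictly inside the open set
\[
\{(a,b,c,d):\ a, b, c, d > 0,\ ab > d^2\}
\]
cut out by positivity and strong ellipticity. Once this is arranged, $\det F = f_3 - f_1 f_2$ can be engineered to vanish on $B_\varepsilon$ while staying non-zero on $B_{2\varepsilon} \setminus B_\varepsilon$, exactly as in the 4D case.

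My first step would be the polynomial ansatz
\[
g_1 = 1 + x_1 + x_1^2 - x_2^2,\qquad g_2 = x_2 + x_1^2 - x_3^2,\qquad g_3 = g_1 g_2.
\]
A short product-rule calculation at the origin (using $g_1(0) = 1$, $\nabla g_1(0) = e_1$, $g_2(0) = 0$, $\nabla g_2(0) = e_2$) yields $M(0)$ with rows $(2,-2,0,0)$, $(2,0,-2,0)$, $(2,0,-2,2)$, so rank $3$ with kernel $(1,1,1,0)$. This realises the ordinary Laplacian at $0$ --- strongly elliptic but with $d = 0$ sitting on the boundary of the positivity cone, so not quite what the proposition asks for.

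To push $d$ strictly positive I would recenter at the nearby base point $p_0 = (0, \varepsilon_0, 0)$ for small $\varepsilon_0 > 0$. The first two rows of $M(p_0)$ are unchanged (their entries are constants), and a direct product-rule computation for the third row shows that the kernel at $p_0$ is $(1, 1, 1, 2\varepsilon_0)$, which lies strictly inside the positive elliptic cone because $ab - d^2 = 1 - 4\varepsilon_0^2 > 0$. After translating coordinates so that $p_0$ becomes the origin, some $3 \times 3$ minor of $M(0)$ is non-zero, hence non-zero on a ball $B_{2\varepsilon}$; normalising one kernel coordinate to $1$ and solving via this minor produces smooth $(a, b, c, d)$ on $B_{2\varepsilon}$ which by continuity remain in the positive elliptic cone for $\varepsilon$ small.

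Finally, to break the product identity outside $B_\varepsilon$ without losing the equation, set $f_1 = g_1$, $f_2 = g_2$ and $f_3 = g_1 g_2 + \chi h$, where $\chi$ is a smooth cut-off supported in $B_{2\varepsilon} \setminus \overline{B_\varepsilon}$ and nowhere zero there and $h$ is any smooth function. Shrinking $h$ so that $\|\chi h\|_{C^2}$ is small, the perturbed matrix $\widetilde M$ still has rank $3$ with kernel in the positive elliptic cone on $B_{2\varepsilon}$; re-extracting $(a,b,c,d)$ in the same way yields a strongly elliptic operator with $\Lapl f_i = 0$ for $i = 1, 2, 3$, and $\det F = f_3 - f_1 f_2 = \chi h$ vanishes on $B_\varepsilon$ yet is non-zero on $B_{2\varepsilon} \setminus B_\varepsilon$, giving the required failure of WUCP. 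The one non-routine issue distinguishing this case from the diagonal 4D proof is the positivity of $d$: the naive 4D-style ansatz puts $d = 0$ on the kernel at the origin, and moving the base point to $p_0$ (equivalently, absorbing the right constant and linear terms into $g_1, g_2$) is what pushes the kernel off that boundary while preserving strong ellipticity.
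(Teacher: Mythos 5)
Your proposal is correct and follows essentially the same strategy as the paper: view the equations as a pointwise $3\times 4$ linear system for $(a,b,c,d)$, choose quadratics $g_1, g_2$ with $g_3 = g_1 g_2$ so that the kernel at a base point lies strictly inside the positive, strongly elliptic cone, extract the coefficients smoothly from a non-vanishing $3\times 3$ minor, and break the product identity outside $B_\varepsilon$. The paper simply takes $g_1 = x^2 - y^2 + x$ and $g_2 = x^2 - z^2 + x - 2y$, both vanishing at the origin with gradients $(1,0,0)$ and $(1,-2,0)$, which yields $a(0)=b(0)=c(0)=1$, $d(0)=\tfrac12>0$ directly and thereby avoids your recentering step.
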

\begin{proof}
Similar to the proof of the previous theorem. We choose the following functions:
\[g_1 = x^2 - y^2 + x, \quad g_2 = x^2 - z^2 + x - 2y \quad \text{ and } \quad g_3 = g_1 g_2\]
Note that this yields $a(0) = b(0) = c(0) = 1$ and $d(0) = \frac{1}{2}$ to be the solution as in \eqref{matrixeqn}. From this point, the argument works the same. 
\end{proof}

Finally, we show that if we introduce some linear terms, we can go to two dimensions, as well.

\begin{theorem}[Counterexample in the diagonal case in 2D]\label{cexn=4}
There exist an $\varepsilon > 0$ and smooth, positive and real coefficient functions $a, b, c, d$ on $B_\varepsilon \subset \mathbb{R}^2$ and smooth functions $f_1, f_2, f_3$ on $B_{2\varepsilon}$, such that for $\Lapl: = (a \partial_1^2 + b\partial_2^2 + c\partial_1 + d\partial_2)$, we have
\begin{align}\label{eqncexn=4}
\Lapl f_i = (a \partial_1^2 + b\partial_2^2 + c\partial_1 + d\partial_2)f_i = 0
\end{align}
for $i = 1, 2, 3$. Also, we have $f_1 f_2 = f_3$ on $B_{\varepsilon}$, but $f_1 f_2 \neq f_3$ on $B_{2\varepsilon} \setminus B_{\varepsilon}$.

Therefore, $F := \begin{pmatrix}
f_3 & f_2\\
f_1 & 1
\end{pmatrix}$ satisfies $\Lapl F = 0$ and $\det F = 0$ on $B_{\varepsilon}$, but $\det F \neq 0$ on $B_{2\varepsilon} \setminus B_{\varepsilon}$; so the WUCP fails in this case.
\end{theorem}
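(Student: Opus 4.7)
The plan is to run the strategy of the preceding higher-dimensional counterexamples almost verbatim: view the system $\mathcal{L}f_i = 0$ for $i = 1, 2, 3$ as a linear constraint on the coefficient vector $(a, b, c, d)^T$ at each point. Explicitly, it is equivalent to $M(p)(a, b, c, d)^T = 0$ with the $3 \times 4$ matrix
\[
M(p) = \begin{pmatrix}
\partial_1^2 f_1 & \partial_2^2 f_1 & \partial_1 f_1 & \partial_2 f_1 \\
\partial_1^2 f_2 & \partial_2^2 f_2 & \partial_1 f_2 & \partial_2 f_2 \\
\partial_1^2 f_3 & \partial_2^2 f_3 & \partial_1 f_3 & \partial_2 f_3
\end{pmatrix}(p).
\]
Being $3 \times 4$, $M(p)$ automatically has non-trivial kernel; what I need is to pick polynomial $g_1, g_2$, set $g_3 := g_1 g_2$, and arrange that $M(0)$ has rank exactly three with one-dimensional kernel spanned by a vector all of whose entries are strictly positive.

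The new wrinkle compared to the previous two proofs is that the extra column now corresponds to a \emph{first-order} derivative rather than a second-order one. In particular, if both $g_1, g_2$ vanished at the origin as before, the third row of $M(0)$ would reduce to $(2\partial_1 g_1 \partial_1 g_2,\, 2\partial_2 g_1 \partial_2 g_2,\, 0,\, 0)$, which is too degenerate to yield a positive kernel vector. The fix is to allow $g_1(0), g_2(0) \neq 0$, so that by the Leibniz rule the third row nontrivially mixes values and first derivatives of the $g_i$ with their second-order coefficients. A concrete choice that works is
\[
g_1 = 1 + x_1 + x_2 - x_1^2 - x_2^2, \qquad g_2 = 1 + x_1 - x_2, \qquad g_3 = g_1 g_2,
\]
for which a short computation produces rows $(-2, -2, 1, 1)$, $(0, 0, 1, -1)$, $(0, -4, 2, 0)$ for $M(0)$: rank three, with kernel spanned by $(1, 1, 2, 2)$. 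This verification is the one explicit calculation I would actually perform.

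Once this data is in place, the rest of the argument is routine and parallels the 3D case. By continuity $M(p)$ retains rank three on some ball $B_\varepsilon$, and its one-dimensional kernel admits a smooth section $(a, b, c, d)$ with $(a(0), b(0), c(0), d(0)) = (1, 1, 2, 2)$ bounded below by a positive constant on $B_\varepsilon$. One then extends $f_1, f_2, f_3$ smoothly to $B_{2\varepsilon}$ so that they still equal $g_1, g_2, g_3$ on $B_\varepsilon$ but with $f_1 f_2 \neq f_3$ on $B_{2\varepsilon} \setminus B_\varepsilon$ (for example by modifying $f_3$ with a bump function outside $B_\varepsilon$), and extends $(a, b, c, d)$ smoothly and positively to $B_{2\varepsilon}$ while preserving $M(p)(a, b, c, d)^T = 0$ throughout. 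The matrix $F$ of the statement then satisfies $\mathcal{L}F = 0$ on $B_{2\varepsilon}$ and $\det F = f_3 - f_1 f_2$ vanishes on $B_\varepsilon$ but not outside it, producing the desired WUCP counterexample.

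The main potential obstacle is the compatible simultaneous smooth extension of $(a, b, c, d)$ and the $f_i$ outside $B_\varepsilon$, but this is handled exactly as in the preceding proof: the $f_i$ are perturbed only slightly in the $C^2$-sense outside $B_\varepsilon$, so that $M$ retains rank three and its strictly positive kernel depends smoothly on the point.
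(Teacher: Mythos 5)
Your proposal is correct and follows essentially the same strategy as the paper: view the three equations as a pointwise linear system $M(p)(a,b,c,d)^T = 0$, pick explicit polynomials $g_1, g_2$ and $g_3 = g_1 g_2$ so that $M(0)$ has rank three with a suitable one-dimensional kernel, and then extend by small perturbation. The only difference is the choice of data: the paper takes $g_1 = x^2 + x + y$, $g_2 = x^2 + x - y$ (vanishing at the origin), which yields the kernel vector $(1,1,-2,0)$, whereas your choice with $g_i(0) \neq 0$ produces the strictly positive kernel vector $(1,1,2,2)$ --- a minor improvement in that it matches the literal positivity claim in the statement, although only $a, b > 0$ is needed for ellipticity.
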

\begin{proof}
The tactics is the same as before, but we now let
\[g_1 = x^2 + x + y, \quad g_2 = x^2 + x - y \quad \text{ and } \quad g_3 = g_1 g_2\]
Then at the origin we have the solution to \eqref{matrixeqn} given by $a(0) = b(0) = 1$, $c(0) = -2$ and $d(0) = 0$. Now we extend these functions to $f_1, f_2$ and $f_3$ and note that the ellipticity is preserved by small perturbations.
\end{proof}

Finally, we give a counterexample for the WUCP in the case of a divergence type operator (with a zero order term under the divergence sign) and a $2$ by $2$ matrix in 2D. The approach combines the ideas above in Theorem \ref{cexn=4} and the reduction techniques of Alessandrini \cite{ales} and Schulz \cite{Schulz}. See also Section \ref{reductions} below on these reduction techniques. The idea is to generate solutions to $Lu = -\div(A\nabla u + b \cdot u) + C\cdot \nabla u + du$ using the techniques above and then use the reduction techniques to get rid of the $C$ and $d$ coefficients.

We start by stating an algebraic Lemma (c.f. Lemma \ref{lem1}):

\begin{lemma}\label{reductionlemma}
Let $A$ be a symmetric matrix, $C$ and $b$ vector functions and $d$ a scalar function (all smooth) on $\mathbb{R}^n$. Consider the operator
\begin{align*}
\Lapl u = -\partial_i \big(a^{ij} \partial_j u + b^i u\big) + c^i \partial_i u + du
\end{align*}
Assume $\Lapl \varphi = 0$ and $\Lapl^* \psi = -\partial_i \big(a^{ij} \partial_j \psi + c^i \psi\big) + b^i \partial_i \psi + d\psi = 0$ with $\psi$ non-vanishing ($\Lapl^*$ is the adjoint). Then $v = \frac{\varphi}{\psi}$ satisfies
\begin{align*}
-\partial_i\big(\psi^2(a^{ij} \partial_j v + (b^i - c^i)v)\big) = 0
\end{align*}
\end{lemma}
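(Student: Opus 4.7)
The plan is to derive a Lagrange/Green-type identity for the adjoint pair $(\Lapl,\Lapl^*)$ and then substitute $\varphi = \psi v$ into the resulting conservation law. Since $\psi$ is non-vanishing, the substitution $v = \varphi/\psi$ is smooth, so it suffices to show that the current $F^i := \psi(a^{ij}\partial_j\varphi + b^i\varphi) - \varphi(a^{ij}\partial_j\psi + c^i\psi)$ simplifies to $\psi^{2}(a^{ij}\partial_j v + (b^i-c^i)v)$ and is divergence-free.

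First, I would compute $\psi\Lapl\varphi - \varphi\Lapl^*\psi$ directly by expanding both operators. Using the Leibniz rule to move one derivative off of each divergence term (so that $-\psi\partial_i(a^{ij}\partial_j\varphi + b^i\varphi) = -\partial_i[\psi(a^{ij}\partial_j\varphi + b^i\varphi)] + \partial_i\psi\cdot(a^{ij}\partial_j\varphi + b^i\varphi)$, and similarly for the other term), one obtains
\begin{equation*}
\psi\Lapl\varphi - \varphi\Lapl^*\psi \;=\; -\partial_i F^i \;+\; a^{ij}\bigl(\partial_i\psi\,\partial_j\varphi - \partial_i\varphi\,\partial_j\psi\bigr),
\end{equation*}
after the $b^i$ and $c^i$ terms from the product rule cancel against the corresponding first-order parts $\psi c^i\partial_i\varphi$ and $-\varphi b^i\partial_i\psi$ of $\Lapl$ and $\Lapl^*$. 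The crucial step is to observe that the residual quadratic form $a^{ij}(\partial_i\psi\,\partial_j\varphi - \partial_i\varphi\,\partial_j\psi)$ vanishes identically because $A = (a^{ij})$ is symmetric (this is the only place where symmetry enters). Hence the Lagrange identity $\psi\Lapl\varphi - \varphi\Lapl^*\psi = -\partial_i F^i$ holds. Since by hypothesis $\Lapl\varphi=0$ and $\Lapl^*\psi=0$, the left-hand side is zero, so $\partial_i F^i = 0$.

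Finally I would specialise the flux. Writing $\varphi = \psi v$ and expanding,
\begin{equation*}
F^i = \psi\bigl(a^{ij}(\psi\partial_j v + v\partial_j\psi) + b^i\psi v\bigr) - \psi v\bigl(a^{ij}\partial_j\psi + c^i\psi\bigr) = \psi^{2}\bigl(a^{ij}\partial_j v + (b^i - c^i)v\bigr),
\end{equation*}
where the $a^{ij} v\,\partial_j\psi$ cross-term is killed by the matching term from $\psi v\,a^{ij}\partial_j\psi$. Combined with $\partial_i F^i = 0$, this gives $-\partial_i\bigl(\psi^{2}(a^{ij}\partial_j v + (b^i-c^i)v)\bigr) = 0$, as claimed.

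There is no real obstacle here: the statement is essentially a bookkeeping exercise once the Lagrange identity is in place. The only subtle point is tracking how the $b^i$ and $c^i$ terms swap roles between $\Lapl$ and $\Lapl^*$, which is precisely what produces the factor $(b^i-c^i)$ in the reduced divergence-form operator (and is the reason this reduction is useful for the Alessandrini--Schulz-style trick: the zero-order drift terms have been converted into a symmetric drift $b^i-c^i$ with modified coefficient matrix $\psi^{2}a^{ij}$).
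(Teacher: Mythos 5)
Your proof is correct and is essentially the computation the paper leaves to the reader (the paper's proof only says ``lengthy computation similar to Lemma \ref{lem1}'' and points to \cite{Schulz, ales}): the Lagrange identity $\psi\Lapl\varphi-\varphi\Lapl^*\psi=-\partial_iF^i+a^{ij}(\partial_i\psi\,\partial_j\varphi-\partial_i\varphi\,\partial_j\psi)$ checks out, the antisymmetric term does vanish exactly by symmetry of $a^{ij}$, and the flux $F^i$ with $\varphi=\psi v$ reduces to $\psi^2(a^{ij}\partial_j v+(b^i-c^i)v)$ as you show. Your organization via the divergence-free current is a clean way of packaging the same substitution-and-expand argument, so no gap to report.
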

\begin{proof}
This is just a lengthy computation similar to the proof of Lemma \ref{lem1}. See also \cite{Schulz} for a use of this identity; a more involved identity for $A$ non-symmetric can be found in \cite{ales}.
\end{proof}

We are now in shape to prove the following counterexample:

\begin{theorem}
Assume $\varepsilon > 0$, $f_1, f_2, f_3$ and $c, d$ are as in Theorem \ref{cexn=4}; switch the sign on $a$ and $b$ in the same Theorem. Then there exists a smooth $\psi > 0$, such that $g_k := \frac{f_k}{\psi}$ satisfy, for $k = 1, 2, 3$:
\begin{align*}
\Lapl' g_k = -\partial_1\big(\psi^2 (a\partial_1g_k + (c + \partial_1 a) g_k )\big) - \partial_2\big(\psi^2( b\partial_2 g_k + (d + \partial_2 b) g_k)\big) &= 0\\
\Lapl' \Big(\frac{1}{\psi}\Big) &= 0
\end{align*}
Therefore, $g_3 = g_1 g_2$ on $B_{\varepsilon}$ but not on $B_{2\varepsilon} \setminus B_\varepsilon$ and so we have a contradiction to the WUCP for the divergence type operator $\Lapl'$ and the matrix function $G := \begin{pmatrix}
g_3 & g_2\\
g_1 & \frac{1}{\psi}
\end{pmatrix}$
\end{theorem}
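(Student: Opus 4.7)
My approach follows the outline given by the author just before the statement: combine the 2D counterexample from Theorem~\ref{cexn=4} (with the signs of $a$ and $b$ reversed so the principal part is in divergence form with positive-definite symbol) with the algebraic reduction in Lemma~\ref{reductionlemma}. Concretely, the solutions $f_1,f_2,f_3$ then satisfy
\[
\tilde\Lapl f_k := -a\partial_1^2 f_k - b\partial_2^2 f_k + c\partial_1 f_k + d\partial_2 f_k = 0 \qquad (k=1,2,3)
\]
with $f_1 f_2 = f_3$ on $B_\varepsilon$ and $f_1 f_2 \neq f_3$ on $B_{2\varepsilon}\setminus B_\varepsilon$. I rewrite $\tilde\Lapl$ in the normal form $\Lapl u = -\partial_i(a^{ij}\partial_j u + b^i u) + c^i \partial_i u + du$ of Lemma~\ref{reductionlemma} by taking $a^{ij} = \operatorname{diag}(a,b)$, $b^i = 0$, $c^1 = c + \partial_1 a$, $c^2 = d + \partial_2 b$ and $d = 0$; note in particular that $\tilde\Lapl(1) = 0$, which will be exploited for the bottom-right entry of $G$.

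The main step, and the only non-routine one, is to produce a smooth $\psi > 0$ on $B_{2\varepsilon}$ (possibly after shrinking $\varepsilon$ and rerunning Theorem~\ref{cexn=4} on the smaller ball) satisfying $\Lapl^* \psi = 0$. I would obtain $\psi$ as the unique solution of the Dirichlet problem $\Lapl^* \psi = 0$, $\psi|_{\partial B_{2\varepsilon}} = 1$. Rescaling $\tilde x = x/\varepsilon$ and multiplying by $\varepsilon^2$ turns $\Lapl^*$ into an $O(\varepsilon)$ perturbation of its frozen principal part $-a^{ij}(0) \partial_{\tilde i}\partial_{\tilde j}$ on $B_2$, for which the solution with constant boundary datum $1$ is $\tilde\psi \equiv 1$. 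A standard elliptic perturbation argument (write $\tilde\psi = 1 + w$, invert the principal part on $H^2\cap H^1_0$, close by Neumann series) then yields a smooth $\tilde\psi$ close to $1$ in $C^2(\overline{B_2})$ for $\varepsilon$ small, hence positive; undoing the rescaling gives $\psi > 0$ on $B_{2\varepsilon}$.

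With $\psi$ in hand, Lemma~\ref{reductionlemma} applied in turn with $\varphi = f_1, f_2, f_3$ and with $\varphi \equiv 1$ produces $g_k := f_k/\psi$ and $1/\psi$, all annihilated by the pure divergence-form operator $\Lapl'$ displayed in the theorem (the weight $\psi^2 > 0$ preserves uniform ellipticity). Forming
\[
G := \begin{pmatrix} g_3 & g_2 \\ g_1 & 1/\psi \end{pmatrix}, \qquad \det G \;=\; \frac{g_3}{\psi} - g_1 g_2 \;=\; \frac{f_3 - f_1 f_2}{\psi^{2}},
\]
we see $\Lapl' G = 0$ entry-wise, $\det G \equiv 0$ on $B_\varepsilon$ (since $f_3 = f_1 f_2$ there), and $\det G \not\equiv 0$ on $B_{2\varepsilon} \setminus B_\varepsilon$ (since $\psi > 0$ while $f_1 f_2 \neq f_3$), violating the WUCP.

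The only delicate point is the existence of a positive solution $\psi$ of the non-self-adjoint equation $\Lapl^* \psi = 0$, as the zero-order coefficient of $\Lapl^*$ after normalization has indefinite sign so no direct maximum principle is available. The rescaling/perturbation argument above finesses this by letting us shrink the ball; it is the step that would require genuine care if one insisted on working on a fixed or non-contractible domain.
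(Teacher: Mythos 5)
Your proposal follows the paper's proof essentially step for step: rewrite the equations of Theorem \ref{cexn=4} in the form required by Lemma \ref{reductionlemma}, produce a positive solution $\psi$ of $\Lapl^*\psi=0$ via the Dirichlet problem with boundary datum $1$, divide by $\psi$ to get the $g_k$, use $\Lapl(1)=0$ for the entry $1/\psi$, and read off $\det G=(f_3-f_1f_2)/\psi^2$. The one place you deviate is the positivity of $\psi$: the paper simply invokes ``the minimum principles for $\Lapl^*$'' to get $\psi\ge 1$, whereas you correctly observe that the adjoint's lower-order term has no favourable sign and instead obtain $\psi>0$ by shrinking $\varepsilon$ and a rescaling/Neumann-series perturbation off the frozen constant-coefficient operator; this is a legitimate (and arguably more carefully justified) substitute for the paper's appeal to a maximum principle, and it costs nothing since the construction of Theorem \ref{cexn=4} is valid on arbitrarily small balls.
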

\begin{proof}
We rewrite the equations from Theorem \ref{cexn=4} in the following form:
\begin{align*}
0 = \Lapl f_k = -\partial_1 \big(a \partial_1 f_k\big) - \partial_2 \big(b \partial_2 f_k\big) + \big(c + \partial_1 a\big)\partial_1 f_k + \big(d + \partial_2 b\big) \partial_2 f_k
\end{align*}
We want to apply Lemma \ref{reductionlemma} to the operator $\Lapl^*$, or in other words we want to solve
\begin{align*}
\Lapl^* \psi = -\partial_1\big(a \partial_1 \psi + (c + \partial_1 a)\psi \big) - \partial_2 \big(b \partial_2 \psi + (d + \partial_2 b) \psi\big) = 0
\end{align*}
with $\psi > 0$. But we can just solve the Dirichlet problem for $\Lapl^* \psi = 0$ with $\psi = 1$ on $\partial B_{2\varepsilon}$; then the minimum principles for $\Lapl^*$ give that $\psi \geq 1$ in the whole of $B_{2\varepsilon}$. So we may apply the previous Lemma to get $g_k = \frac{f_k}{\psi}$ satisfying $\Lapl' g_k = 0$ for $k = 1, 2, 3$.

Furthermore, since $\Lapl (1) = 0$ we clearly have $\Lapl' \Big(\frac{1}{\psi}\Big) = 0$. The conclusion follows from the definition of $f_k$ for $k = 1, 2, 3$.
\end{proof}

\begin{rem}\rm
Note that the technique in Theorem \ref{cexn=4} cannot be applied to only coefficients next to first order and zero order derivatives, for example since in 2D we have $f_1 f_2 = f_3$ implies linear dependence of rows of first order derivatives, so a determinant would vanish. Therefore, we must use coefficients next to second order derivatives.

The question of whether there is a counterexample for the pure divergence operators of the form $-\partial_i(a^{ij}\partial_j \cdot)$ remains open.
\end{rem}

\section{Positive results}\label{sec3}
In Sections \ref{sec3}, \ref{sec4}, \ref{sec5} and \ref{sec6} we outline a few approaches to proving the SUCP or WUCP in Question \ref{mainque} in different situations. As we have seen previously, there is little hope in proving UCPs for general operators of form \eqref{eqn}, so we need to restrict the class we consider. In particular, we are interested in 
\begin{enumerate}
\item[1.] Divergence type operators $\partial_i(a^{ij}\partial_j)$.\label{type1div}
\item[2.] Conformally Euclidean metrics, i.e. operators of type 1. with $a^{ij}(x) = c(x)\delta^{ij}$ for some positive function $c(x)$.\label{type2confeucl}
\item[3.] Elliptic operators of the form $a^{ij}\partial_i \partial_j + b_i\partial_i + c$.\label{type3genell}
\end{enumerate}

Note that the Laplace-Beltrami operator given by $\Delta_g = -\frac{1}{\sqrt{|g|}} \partial_i \big(\sqrt{|g|} g^{ij} \partial_j\big)$ is of divergence type. 

In this section, we prove a positive result in the case 1. above with $n = 1$. The proof uses elementary properties of solutions to ODEs in 1D.




\begin{prop}[Divergence type for $n = 1$]\label{divn=1}
Let $m \in \mathbb{N}$. Assume $F: \mathbb{R} \to \mathbb{C}^{m \times m}$ is a smooth matrix function satisfying
\[\frac{d}{dt}\big(a \frac{dF}{dt}\big) = 0\]
for a positive smooth function $a$ on $\mathbb{R}$. If $\det F$ vanishes to order $(m + 1)$ at $0$, then $\det F = 0$ on the whole of $\mathbb{R}$. So both the SUCP and WUCP hold in this case.
\end{prop}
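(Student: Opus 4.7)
The plan is to integrate the ODE explicitly, turning the problem into a statement about polynomials. Since $a\frac{dF}{dt}$ has vanishing derivative, it must be a constant matrix $C$, so
\[
F(t) = F(0) + C\int_0^t \frac{ds}{a(s)} = A + u(t)B,
\]
where I write $A = F(0)$, $B = C$, and $u(t) := \int_0^t ds/a(s)$. Because $a>0$ is smooth on $\mathbb{R}$, the function $u$ is smooth with $u(0)=0$ and $u'(0)=1/a(0) > 0$, hence $u$ is a global smooth diffeomorphism from $\mathbb{R}$ onto an open interval.

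The next step is to examine the polynomial structure of the determinant. Define
\[
p(s) := \det(A + sB), \qquad s \in \mathbb{C}.
\]
Then $p$ is a polynomial in $s$ of degree at most $m$, since $A$ and $B$ are $m\times m$ matrices, and by construction $\det F(t) = p(u(t))$. Thus the problem reduces to studying vanishing of the composition $p\circ u$ at the origin.

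Now I would compare vanishing orders. Since $u$ is a local diffeomorphism at $0$ with $u(0)=0$, substituting $s = u(t)$ preserves the order of vanishing of a smooth function at $0$: the assumption that $\det F$ vanishes to order $m+1$ at $t=0$ translates into $p(s)$ vanishing to order $m+1$ at $s=0$. A polynomial of degree at most $m$ that vanishes to order $m+1$ at any single point is identically zero, so $p \equiv 0$, and therefore $\det F(t) = p(u(t)) \equiv 0$ on all of $\mathbb{R}$. This gives both the SUCP and the WUCP at once.

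There is no substantial obstacle here; the only points worth verifying carefully are the elementary observations that (i) the first-order ODE $a F' = C$ forces $F$ to be affine in the single scalar function $u(t)$, reducing an a priori matrix problem to a single-variable polynomial problem, and (ii) $u$ being a diffeomorphism at $0$ preserves vanishing order under composition, so the hypothesis of order $m+1$ vanishing passes cleanly from $t$ to $s = u(t)$. The bound $\deg p \le m$ is exactly what dictates the sharp threshold $m+1$ in the statement.
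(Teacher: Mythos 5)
Your proof is correct and takes essentially the same route as the paper's: both reduce $F$ to an affine function of a single scalar solution of the ODE, so that $\det F$ is a degree-$\le m$ polynomial composed with a local diffeomorphism at $0$, whence order-$(m+1)$ vanishing forces the polynomial to vanish identically. Your explicit parametrisation by $u(t)=\int_0^t ds/a(s)$ is a slightly cleaner packaging that avoids the paper's case split on whether all entries are constant, but the underlying idea is identical.
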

\begin{proof}
Note that for an entry $f$ of $F$, we have
\[\frac{df}{dt} = \frac{C(f)}{a}\]
where $C(f)$ is a constant. Therefore, if $\frac{df}{dt}$ vanishes at any point, we must have $f$ constant. If all entries of $F$ are constant, we are done. If we have $\frac{df}{dt} \neq 0$, then for any other entry $g$ of $F$, we have $\frac{dg}{dt} = C(f, g) \frac{df}{dt}$ for a constant $C(f, g)$ and consequently, we must have $g = C(f, g)f + C'(g)$ for another constant $C'(g)$.

Thus, there exists a holomorphic polynomial $p$ of degree up to $m$, such that $\det F (t) = p (f(t))$ for all $t$. Since $\frac{df}{dt} \neq 0$, $f$ maps $[-\varepsilon, \varepsilon]$ diffeomorphically to $f([-\varepsilon, \varepsilon]) \subset \mathbb{C}$ for some $\varepsilon > 0$, by the inverse function theorem.

By the chain rule, we obtain that $p$ vanishes to infinite order at $f(0)$, but since $p$ is a holomorphic polynomial, this is impossible unless $p \equiv 0$ and so $\det F \equiv 0$. 
\end{proof}

The proof of the above Proposition works for operators of the form $\frac{d^2}{dt^2} + a\frac{d}{dt}$ in the same way, but what about $P = \frac{d^2}{dt^2} + a\frac{d}{dt} + b$? The following Proposition answers our third question above positively.

\begin{prop}
Let $F: \mathbb{R} \to \mathbb{C}^{m \times m}$ be a smooth matrix function and we consider, for smooth $a$ and $b$
\[P = \frac{d^2}{dt^2} + a\frac{d}{dt} + b\]
Then $PF = 0$ and $\det F$ vanishing to order $(m + 1)$ at zero implies that $\det F \equiv 0$. So $\det F$ satisfies both the SUCP and WUCP.
\end{prop}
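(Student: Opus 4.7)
The plan is to reduce the claim to a purely algebraic statement about homogeneous polynomials in two variables, in direct analogy with the use of a polynomial $p$ in the proof of Proposition \ref{divn=1}, but now exploiting the \emph{two-dimensional} solution space of the scalar ODE $\ddot f + a\dot f + bf = 0$.

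First, I would fix a basis $(u, v)$ of this solution space on $\mathbb{R}$. Since $P$ acts entrywise, every entry $F_{ij}$ satisfies $\ddot F_{ij} + a\dot F_{ij} + b F_{ij} = 0$, so there are constants $\alpha_{ij}, \beta_{ij} \in \mathbb{C}$ with
\[
F_{ij}(t) = \alpha_{ij}\, u(t) + \beta_{ij}\, v(t).
\]
Expanding the Leibniz formula for $\det F$ then exhibits it as a homogeneous polynomial of degree $m$ in the two functions $u, v$:
\[
\det F(t) \;=\; p(u(t), v(t)), \qquad p(X, Y) = \sum_{i+j=m} c_{ij} X^i Y^j \in \mathbb{C}[X, Y].
\]

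Next, by the fundamental theorem of algebra applied to the dehomogenization $p(X, 1)$ (or $p(1, Y)$ if $X$ already divides $p$), the polynomial $p$ factors over $\mathbb{C}$ into a product of $m$ linear forms:
\[
p(X, Y) \;=\; c\, \prod_{k=1}^m (\lambda_k X + \mu_k Y).
\]
Consequently $\det F(t) = c \prod_{k=1}^m L_k(t)$ where $L_k := \lambda_k u + \mu_k v$ is itself a solution of the scalar ODE. The crucial observation is now the one-dimensional uniqueness theorem: a nontrivial solution of $\ddot f + a\dot f + bf = 0$ cannot vanish to order $\geq 2$ at any point, since $L_k(0) = \dot L_k(0) = 0$ forces $L_k \equiv 0$.

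Finally, if $\det F \not\equiv 0$, then none of the factors $L_k$ is identically zero, so each contributes at most a simple zero at $t = 0$; thus the total order of vanishing of $\det F$ at $0$ is at most $m$, contradicting the assumption that it vanishes to order $m+1$. Hence $\det F \equiv 0$, which gives SUCP; WUCP follows because vanishing on an open set certainly implies vanishing to infinite order at any interior point.

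The argument has no real obstacle; the only conceptual point, and the one that distinguishes this case from Proposition \ref{divn=1}, is that the presence of the zero-order term $b$ enlarges the solution space from one to two dimensions, which is exactly why the factorization threshold rises from $m$ to $m+1$ vanishing orders.
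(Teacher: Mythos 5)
Your argument is correct, and it shares the paper's first step --- every entry of $F$ lies in the two-dimensional solution space of $\ddot f + a\dot f + bf = 0$, so $\det F = p(u,v)$ for a homogeneous polynomial $p$ of degree $m$ --- but it finishes differently. The paper fixes the basis $f_1, f_2$ with $f_1(0)=1$, $f_2(0)=0$, $\dot f_2(0)=1$, dehomogenizes via $p(f_1,f_2) = f_1^m\, p(1, f_2/f_1)$ near $0$, and observes that $t \mapsto f_2(t)/f_1(t)$ is a local diffeomorphism fixing $0$, so the one-variable polynomial $q(z)=p(1,z)$ of degree at most $m$ vanishes to order $m+1$ at $z=0$ and is therefore identically zero. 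You instead split the binary form $p$ over $\mathbb{C}$ into $m$ linear factors $\lambda_k X + \mu_k Y$, note that each $L_k = \lambda_k u + \mu_k v$ is itself a solution of the scalar ODE and hence (by uniqueness for the initial value problem) has at most a simple zero unless it vanishes identically, and conclude that a nonzero $\det F$ can vanish to order at most $m$. The two routes are of comparable length; yours has the advantage of not requiring a normalized basis or division by $f_1^m$, of working verbatim at an arbitrary point rather than just at $0$, and of making transparent why the threshold is exactly $m+1$ (each linear factor contributes at most one order of vanishing), while the paper's dehomogenization argument is the one that generalizes most directly from its preceding Proposition \ref{divn=1}. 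No gap.
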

\begin{proof}
We follow the proof of Proposition \ref{divn=1}. In this case, the solution space to $Pf = 0$ is two dimensional, depending on values $f(0)$ and $\frac{df}{dt}(0)$. Say this is spanned by $f_1$ and $f_2$, where $f_1(0) = 1$ and $\frac{df_1}{dt}(0) = 0$, while $f_2(0) = 0$ and $\frac{df_2}{dt}(0) = 1$.

Since every entry is a linear combination of $f_1$ and $f_2$, we obtain that $\det F(t) = p\big(f_1(t), f_2(t)\big)$, where $p$ is a homogeneous holomorphic polynomial in two variables of degree $m$. But then using homogeneity we get $p\big(f_1(t), f_2(t)\big) = f_1^m(t) p\big(1, \frac{f_2(t)}{f_1(t)}\big)$ near zero, so the auxiliary polynomial $q(z) = p(1, z)$ vanishes to order $(m+1)$ at $z = 0$ and so $q \equiv 0$, implying $\det F \equiv 0$.
\end{proof}

Together with our counterexample Proposition \ref{cexn=1}, this circles up the story for $n = 1$.

\section{Harmonic conjugates} \label{sec4}
Here we focus mostly on the $m = 2$ and $n = 2$ case and operators of divergence form. Recall that two functions $u$ and $v$ on $\mathbb{C}$ are \emph{harmonic conjugate} if $u + iv$ is holomorphic. In other words, $u$ and $v$ satisfy the Cauchy-Riemann equations.

Given $\Omega \subset \mathbb{C}$ simply connected, then given a harmonic function $u$, there exists a unique harmonic conjugate function (up to constant), that is given by integrating the rotated gradient along an arbitrary curve; that this is well-defined follows from the divergence theorem.

More generally, given a smooth metric $g$ on $\Omega \subset \mathbb{C}$ simply connected, we say that two harmonic functions (i.e. $\Delta_g a = \Delta_g b = 0$) $a$ and $b$ are \emph{harmonic conjugate with respect to $g$} if $da = \star db$, where $\star$ is the Hodge star\footnote{In $2$ dimensions, the Hodge star $\star$ is just the rotation by $90$ degrees clockwise.}. Given just a harmonic function $b$, then $a$ exists and is unique up to constants. This follows from the fact that the Laplace-Beltrami operator can be written as $\Delta_g = d^*d$, where $d^* = \star d \star$ and $\star^2 = -1$ on one forms. The harmonicity of $b$ implies $\star db$ is closed, so $a$ exists and is unique up to constants. Moreover, this $a$ is clearly also harmonic and we also notice that $|da|_g = |db|_g$. 

Also, note that if given two harmonic function $a$ and $b$ with $\Delta_g a = \Delta_g b = 0$ with $\langle{da, db}\rangle = 0$ in $\Omega$, then $a$ and $b$ are harmonic conjugates w.r.t. $g$ (up to constants). To see this, note that $da = \lambda \star db$ for some function $\lambda$ and so by applying $d$ and $d\star$ to both sides, we deduce $d\lambda = 0$. This implies $\lambda$ is constant and so we get our conclusion.

Moreover, it is enough to have $\langle{da, db}\rangle = 0$ on an open subset $\Omega' \subset \Omega$ to conclude $a$ and $b$ are conjugate in $\Omega$: namely, note that $a$ determines a unique harmonic conjugate $b'$ in $\Omega$, which is by previous paragraph equal to $b$ in $\Omega'$ (up to multiplication by constant). Thus, by WUCP for $\Delta_g$, we get $b \equiv b'$ is conjugate to $a$ on the whole of $\Omega$.

How can we extend this to an arbitrary operator of divergence type $P = \partial_i(a^{ij} \partial_j)$, where $a^{ij}$? Notice firstly that for the operator $\Delta_g$ we have the corresponding $a^{ij} = \frac{g^{ij}}{\sqrt{|g|}}$ where $|g| = \det g$ and in this case $\det A = 1$, where $A_{ij} = a^{ij}$ is the associated matrix. See the next section for the proper treatment of the case of general $A$ and the corresponding structures.

We first present a useful Lemma producing an equation for the quotient of the two solutions.



\begin{lemma}\label{lem1}
Let $f$ and $g$ be two smooth functions in $\mathbb{R}^n$ with $Pf = Pg = 0$ and $g \neq 0$, then $\partial_i(g^2 a^{ij} \partial_j \frac{f}{g}) = 0$, so in other words $\frac{f}{g}$ also satisfies a divergence type equation.
\end{lemma}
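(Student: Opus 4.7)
The plan is a direct computation using the product rule together with the symmetry of the matrix $a^{ij}$. First I would set $u = f/g$ and write
\begin{equation*}
g^{2} a^{ij}\partial_{j} u = g^{2} a^{ij}\partial_{j}\!\left(\tfrac{f}{g}\right) = a^{ij}\bigl(g\,\partial_{j} f - f\,\partial_{j} g\bigr),
\end{equation*}
which is well defined on $\{g\neq 0\}$ and avoids differentiating through the singular factor $1/g^{2}$.

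Then I would take the outer divergence and distribute the $\partial_{i}$ by the Leibniz rule:
\begin{equation*}
\partial_{i}\bigl(g^{2} a^{ij}\partial_{j} u\bigr) = \partial_{i}\bigl(g\, a^{ij}\partial_{j} f\bigr) - \partial_{i}\bigl(f\, a^{ij}\partial_{j} g\bigr) = (\partial_{i} g)\, a^{ij}\partial_{j} f + g\, Pf - (\partial_{i} f)\, a^{ij}\partial_{j} g - f\, Pg.
\end{equation*}
The terms $g\, Pf$ and $f\, Pg$ vanish by hypothesis, so what remains is $(\partial_{i} g)\, a^{ij}(\partial_{j} f) - (\partial_{i} f)\, a^{ij}(\partial_{j} g)$.

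The final step is to observe that these two remaining terms cancel because $a^{ij}$ is symmetric: relabeling $i\leftrightarrow j$ in the second sum and using $a^{ij}=a^{ji}$ converts it into the first. (The symmetry assumption is natural here since the relevant examples in the paper come from Riemannian metrics via $a^{ij}=g^{ij}/\sqrt{|g|}$, but it is worth flagging explicitly.) There is no real obstacle; the only subtlety is keeping track of where $\partial_{i}$ falls in the product and making sure the divergence form of $P$ is used, not a non-divergence form — this is what allows the $Pf$, $Pg$ terms to appear cleanly and vanish. I would also add a remark that the identity is local on the open set $\{g\neq 0\}$, which is exactly the regime in which the quotient operator described at the start of this subsection will be used.
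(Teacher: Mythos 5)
Your proof is correct and takes essentially the same route as the paper's: a direct Leibniz-rule computation in divergence form, using $Pf = Pg = 0$ and the symmetry of $a^{ij}$. The only cosmetic difference is that you antisymmetrize first via $g^{2}a^{ij}\partial_{j}(f/g) = a^{ij}(g\,\partial_{j}f - f\,\partial_{j}g)$, whereas the paper expands $\partial_i\bigl(a^{ij}\partial_j(g\cdot\frac{f}{g})\bigr)$ and then multiplies by $g$; both arguments rely on $a^{ij}=a^{ji}$, which you rightly flag explicitly while the paper uses it implicitly.
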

\begin{proof}
This follows easily by computation: 
\begin{align*}
0 = \partial_i(a^{ij}\partial_jf) = \partial_i\big(a^{ij}\partial_j \big(g \cdot \frac{f}{g}\big)\big) = Pg \cdot \frac{f}{g} + 2a^{ij}\partial_i g \partial_j \big(\frac{f}{g}\big) + g \cdot P\big(\frac{f}{g}\big)
\end{align*}
we multiply both sides with $g$, use chain rule and $Pg = 0$ to re-write this as:
\begin{align*}
0 = a^{ij}\partial_i (g^2) \partial_j \big(\frac{f}{g}\big) + g^2 \cdot P\big(\frac{f}{g}\big) = \partial_i\big(g^2 a^{ij} \partial_j \big(\frac{f}{g}\big)\big)
\end{align*}
\end{proof}

Note that if $m = 2$, then this makes us able to reduce the problem (locally) to the case where $F = \begin{pmatrix}
h & g\\
f & 1
\end{pmatrix}$
by dividing with a non-zero entry and using Lemma \ref{lem1} to reduce the problem to a matrix of this form, by redefining $A$. Observe now that if $Pf = Pg = 0$, then $P(fg) = 0$ if and only if $a^{ij}\partial_i f \partial_j g = 0$, i.e. $df$ and $dg$ are orthogonal w.r.t. $A$.

\begin{rem}\rm
If $\det A$ is constant and $A$ is symmetric, then by our discussion above, if $\det F = 0$ in a neighbourhood $\Omega'$ of the origin, then $df$ and $dg$ are orthogonal w.r.t. $A$ in $\Omega'$ and so there is a unique harmonic conjugate (up to constants) to $f$ in $\Omega$; so by unique continuation $g$ is the harmonic conjugate (up to constants) in $\Omega$, too. So we prove the WUCP in this case.

For the proof of the SUCP in this case or in other words, of the fact that $\Delta_g a = \Delta_g b = \Delta_g c = 0$ with $c - ab = O (|x|^\infty)$ at zero implies $c \equiv ab$, see Proposition \ref{SUCP1}.
\end{rem}

Recall the existence of \emph{harmonic coordinates} for surfaces. These are tied with the harmonic conjugates: given $(\Omega, g) \subset \mathbb{R}^2$ and a point $p \in \Omega$, one builds an harmonic function $u$ with $\Delta_g u = 0$ and $u(p) = 0$ with $\nabla u (p) \neq 0$. Then by parametrising with $u$ and the harmonic conjugate of $u$ we get \emph{isothermal coordinates} in which $g = \begin{pmatrix}
\lambda & 0\\
0 & \lambda
\end{pmatrix}$
for a positive function $\lambda$. Note that due to conformal invariance, the harmonic function $h$ in these coordinates satisfies
\begin{align}\label{eqneucl}
\Delta_{eucl} h = \Big(\frac{\partial^2}{\partial x^2} + \frac{\partial^2}{\partial y^2}\Big)h = 0
\end{align}
and so is harmonic in the usual sense. In particular we have
\begin{theorem}\label{isothermal2dsucp}
Let $(\Omega, g)$ be a planar domain with $g$ of class $C^{1, \alpha}$ for $\alpha > 0$. Then let $p: \mathbb{C}^n \to \mathbb{C}$ be a real analytic function. If $\Delta_g f_i = 0$ for $f_i \in C^{2, \alpha}$ and $i = 1, \dotso, N$ for $N \in \mathbb{N}$ and moreover, if $p(f_1, \dotso, f_N)$ vanishes to infinite order at zero, then $p(f_1, \dotso, f_N) \equiv 0$.

In particular, we may choose $p(F) = \det(F)$ to be the determinant of an $\mathbb{C}^{m \times m}$ matrix function and so in this case we have the SUCP.
\end{theorem}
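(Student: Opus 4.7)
The plan is to exploit the conformal invariance of the Laplace-Beltrami operator in two dimensions together with the real analyticity of Euclidean harmonic functions. Fix an arbitrary point $q \in \Omega$. Since $g \in C^{1,\alpha}$, the Korn-Lichtenstein theorem supplies isothermal coordinates $(x, y)$ on a connected neighborhood $U_q$ of $q$, in which $g = \lambda(x, y)(dx^2 + dy^2)$ for some $\lambda > 0$. In two dimensions the Laplace-Beltrami operator transforms under conformal rescaling by $\Delta_{\lambda g_{eucl}} = \lambda^{-1} \Delta_{eucl}$, so each equation $\Delta_g f_i = 0$ reduces on $U_q$ to $\Delta_{eucl} f_i = 0$. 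By interior elliptic regularity (or Weyl's lemma combined with writing $f_i$ as the real part of a holomorphic function in $z = x + iy$), each $f_i$ is real analytic on $U_q$ when expressed in the isothermal coordinates. Composition with the real analytic $p$ then makes $\Phi := p(f_1, \dots, f_N)$ real analytic on $U_q$ in these coordinates.

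Apply this first with $q = 0$ and isothermal coordinates centred at the origin. Vanishing to infinite order at a point is preserved by any $C^1$ change of coordinates fixing that point (since $|\phi(x)| \asymp |x|$ near $0$), so $\Phi$ has trivial Taylor series at $0$ in the isothermal coordinates as well. By real analyticity, $\Phi$ vanishes identically on some neighborhood of $0$.

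To globalise, set $Z := \{q \in \Omega : \Phi \text{ vanishes on a neighborhood of } q\}$. Then $Z$ is open by definition and nonempty by the previous step. For closedness, let $q \in \overline{Z} \cap \Omega$ and pick a connected isothermal chart $V$ around $q$; then $Z \cap V$ is a nonempty open subset of $V$, and on $V$ the function $\Phi$ is real analytic in the isothermal coordinates, so by the identity theorem $\Phi \equiv 0$ on $V$, whence $q \in Z$. Connectedness of $\Omega$ forces $Z = \Omega$. Specialising to $p(F) = \det F$, a polynomial (hence real analytic) in the entries of $F \in \mathbb{C}^{m \times m}$, yields the claimed SUCP for $\det F$ whenever $\Delta_g F = 0$ entry-wise.

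The only substantive ingredient is the existence of isothermal coordinates at the assumed $C^{1,\alpha}$ regularity; the remaining steps are formal consequences of the two-dimensional conformal identity $\Delta_{\lambda g_{eucl}} = \lambda^{-1} \Delta_{eucl}$ and the real analytic identity theorem, so there is no serious obstacle beyond invoking these two classical facts at the stated regularity.
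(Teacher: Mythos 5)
Your proposal is correct and follows essentially the same route as the paper: pass to isothermal coordinates (available at $C^{1,\alpha}$ regularity by \cite{DTK}), use conformal invariance of $\Delta_g$ in 2D to reduce to Euclidean harmonic, hence real analytic, $f_i$, and conclude via analyticity of $p(f_1,\dots,f_N)$. The only difference is that you spell out the chart-by-chart open--closed globalisation and the coordinate-invariance of infinite-order vanishing, which the paper leaves implicit.
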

\begin{proof}
In these conditions, there exist isothermal coordinates \cite{DTK} (c.f. previous paragraph) and in these coordinates $g \in C^{2, \alpha}$. Moreover, we see that $f_i$ satisfy \eqref{eqneucl}, i.e. they are harmonic in the new coordinates. Therefore by elliptic regularity they are smooth and moreover, analytic. So the composition $p(f_1, \dotso, f_N)$ is analytic and vanishes to infinite order and so must entirely vanish.
\end{proof}

\begin{rem}\rm
We might object and say that the previous proof relies on the analyticity. Is there a proof of SUCP for the determinant that does not use analyticity? We sketch this as follows. We note that if $\Delta_g a = \Delta_g b = \Delta_g c = 0$ for $(\Omega, g) \subset \mathbb{R}^2$, then $c - ab = O(|x|^\infty)$ implies $\langle{da, db}\rangle_g = O(|x|^\infty)$. This orthogonality relation can be seen to determine the full jet of $b$ at zero (up to constants) by going to isothermal coordinates, in which the Taylor polynomials of $a$ and $b$ of any order are harmonic. Then one may inductively determine the Taylor coefficients of $b$ from $a$ and the metric, by using this harmonicity of the coefficient polynomials. This implies that $b$ has the same Taylor expansion as the harmonic conjugate of $a$ and so by the SUCP for $\Delta_g$, we see that $b$ must be the harmonic conjugate of $a$.

For a different proof, see Propositon \ref{SUCP1}.
\end{rem}

We continue our study of the 2D case in divergence form by looking at the blow ups of solutions at a point. More precisely, we look at the leading terms of Taylor polynomials of solutions to equations of elliptic operators. Then we have
\begin{prop}
Let $u$ be a smooth solution to $\Lapl u = 0$ in $\mathbb{R}^n$ for any $n$, where $\Lapl$ is any one of the three classes of operators in \eqref{type1div}. Then after a linear change of coordinates, the top Taylor coefficient at zero is harmonic.
\end{prop}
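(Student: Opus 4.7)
The strategy is to normalize the principal part of $\Lapl$ at the origin by a linear change of coordinates, and then match Taylor coefficients of $\Lapl u = 0$ at the lowest available degree. The only real input is that, in each of the three cases, the second order coefficients at $0$ form a symmetric positive definite matrix, which can be linearly conjugated to the identity.

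First, I would put $\Lapl$ in all three classes into the common non-divergence form $\Lapl u = a^{ij}(x)\partial_i\partial_j u + b^i(x)\partial_i u + c(x) u$ with $(a^{ij}(0))$ symmetric positive definite: for class~1, expanding the outer derivative via $\partial_i\bigl(a^{ij}\partial_j u\bigr) = a^{ij}\partial_i\partial_j u + (\partial_i a^{ij})\partial_j u$ produces a first order correction but retains $a^{ij}\partial_i\partial_j$ as the principal part; class~2 is a special case of class~1; class~3 is already in this form. Setting $A_0 := (a^{ij}(0))$, I pick an invertible linear map $T$ with $T A_0 T^T = I$ and pass to coordinates $y = Tx$. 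Since $T$ is affine, $\Lapl$ retains its structural form in $y$-coordinates, the leading Taylor polynomial of $u$ is transformed linearly, and the new second order coefficients at the origin become $\delta^{ij}$. It therefore suffices to prove the statement under the additional assumption $a^{ij}(0) = \delta^{ij}$.

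Next, I would expand $u$ in the new coordinates as $u = p + r$, where $p$ is the nonzero leading homogeneous component of degree $k$ and $r = O(|y|^{k+1})$. If $k \leq 1$ the conclusion is immediate ($\Delta p \equiv 0$), so assume $k \geq 2$, and read off the degree $k-2$ Taylor coefficient of $\Lapl u$. Writing $a^{ij}(y)\partial_i\partial_j p = \Delta p + [a^{ij}(y) - \delta^{ij}]\partial_i\partial_j p$, the first summand is homogeneous of degree $k-2$, while the second is $O(|y|^{k-1})$ because $a^{ij}(y) - \delta^{ij}$ vanishes at the origin. Likewise $a^{ij}(y)\partial_i\partial_j r = O(|y|^{k-1})$, $b^i(y)\partial_i u = O(|y|^{k-1})$, and $c(y)u = O(|y|^k)$. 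Hence the degree $k-2$ component of $\Lapl u$ is exactly $\Delta p$, and $\Lapl u \equiv 0$ forces $\Delta p = 0$.

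The only mildly delicate point is the orderly bookkeeping of orders of vanishing; no analytic input is needed beyond the linear algebra that diagonalizes $A_0$ using ellipticity. I am implicitly assuming that $u$ vanishes to finite order, so that a nonzero top Taylor polynomial exists; the infinite order case belongs to the SUCP discussion rather than to this statement.
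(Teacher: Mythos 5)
Your proof is correct. Both you and the paper begin identically, with a linear change of coordinates normalizing the principal symbol at the origin to $\sum_i \partial_i^2$, and both then isolate the leading homogeneous part of the equation; but the mechanism differs. You match Taylor coefficients directly: writing $u = p + r$ with $p$ the leading degree-$k$ homogeneous component, you observe that every term of $\Lapl u$ other than $\Delta p$ vanishes to order at least $k-1$, so the degree-$(k-2)$ component of $\Lapl u = 0$ reads $\Delta p = 0$. The paper instead uses a blow-up: it sets $u_r(x) = r^{-N}u(rx)$, notes $u_r \to p_N$ locally uniformly with derivatives, rescales the coefficients ($a^r_{ij}(x) = a_{ij}(rx)$, $b^r_i(x) = rb_i(rx)$, $c^r(x) = r^2c(rx)$), and passes to the limit $r \to 0$ in the rescaled equation. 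For smooth data these are two dressings of the same idea, and yours is the more elementary one — no limits or convergence statements are needed, only bookkeeping of orders of vanishing, which you do correctly (including the expansion of the divergence-form operator into non-divergence form and the caveat about finite order of vanishing). What the paper's scaling formulation buys, as its subsequent remark exploits, is robustness under low regularity: with coefficients only in $C^2_{loc}$ or $C^{1,\alpha}$ and an $L^2$-based definition of vanishing order, one can still extract a convergent subsequence of $u_r$ by elliptic estimates and Rellich compactness and pass to the limit, whereas a pointwise Taylor-coefficient comparison is no longer available. So your argument is a clean proof of the stated proposition but does not directly extend to that generalization.
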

\begin{proof}
Change the coordinates by a linear transformation such that the principal part at zero is just $\sum \partial_i^2$. Assume the order of vanishing at zero of $u$ is $N$. Let us introduce $u_r(x) := r^{-N}u(rx)$. Then by Taylor's theorem, $u_r \to p_N$ locally uniformly as $r \to 0$ (with all derivatives), where $p_N$ is the $N$-th Taylor polynomial of $u$. Note that $u_r$ satisfies the following equation:
\begin{align}\label{eqnscaled}
a_{ij}^r \partial_{ij} u_r + b_i^r \partial_i u_r + c^ru_r = 0
\end{align}
Here $a_{ij}^r(x) = a_{ij}(rx)$, $b_i^r(x) = r b_i(rx)$ and $c^r(x) = r^2c(rx)$. Note that we have $a_{ij}^r \to a_{ij}(0)$, $b_i^r \to 0$ and $c^r \to 0$ locally uniformly as $r \to 0$, so when the limit is taken we get
\[\sum \partial_i^2 p_N = 0\]
\end{proof}

\begin{rem}\rm
The above Proposition can be generalised to less smooth coefficients $a_{ij}, b_i, c \in C^2_{loc}(\mathbb{R}^n)$ by considering the order of vanishing of a function $u \in L^2_{loc}(\mathbb{R}^n)$ -- the least non-negative integer $N$ such that there	exists $R > 0$ and constants $c_1, \dotso, c_N$
\[\int_{B(0, r)} |u(x)|^2 dx \leq c_k^2 r^{2k + n}\]
for all $r \leq R$ and $1 \leq k \leq N$ (see \cite{KT07}). 

Then with $u_r(x) = r^{-N}u(rx)$ as before and $u \in H^2_{loc}(\mathbb{R}^n)$ satisfying $\Lapl u = 0$, we have that $u_r$ is bounded uniformly as $r \to 0$ in $H^3\big(B(0, 1)\big)$ by the scaled elliptic estimates $\lVert{u}\rVert_{H^3(B(0, r))} \lesssim \frac{1}{r^3} \lVert{u}\rVert_{L^2(B(0, r))}$ (note that $D^3 u_r(x) = r^{3 - N} D^3 u(rx)$). So by Rellich compactness, we get a convergent subsequence in $H^{2} (B(0, 1))$ and by taking the $r_k \to 0$ over this subsequence in \eqref{eqnscaled}, that $u_r$ in the limit is harmonic. Note we could have applied the same argument for coefficients in $C^{1, \alpha}$ for any $\alpha > 0$; also, the $L^2$ norm could be replaced by the $\sup$ norm in the above definition of the order of vanishing, by use of Schauder estimates and Arzela-Ascoli.
\end{rem}

This takes us to proving the following claim, which is an elementary result classifying pairs of harmonic polynomials satisfying a certain property.

\begin{lemma}\label{harmpoly}
Assume we have four non-zero, real harmonic, homogeneous polynomials $p_{ij}$ in $\mathbb{R}^2$ for $i, j = 1, 2$ with $p_{11}p_{22} = p_{12} p_{21}$. Then one of the following two holds, up to constants and permutations:
\begin{itemize}
\item We are in the trivial case, $p_{11} = p_{12}$ and $p_{22} = p_{21}$.
\item We have $p_{22} = 1$, $p_{12} = A \re (z^k) + B \im (z^k)$ and $p_{12} = C \re (z^k) + D \im (z^k)$ for $A, B, C, D \in \mathbb{R}$ with $AC + BD = 0$ and $k \in \mathbb{N}$. Of course, then $p_{11} = p_{12} p_{21}$.
\end{itemize}
Conversely, in any of the two cases we get a quadruple of harmonic polynomials with $p_{11}p_{22} = p_{12} p_{21}$.
\end{lemma}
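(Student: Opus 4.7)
The plan is to combine unique factorization in the polynomial ring $\mathbb{C}[z, \bar z]$ (viewing $z = x + iy$ and $\bar z = x - iy$ as independent indeterminates) with the rigid structure of real harmonic homogeneous polynomials on $\mathbb{R}^2$. Every such polynomial of degree $d \geq 1$ has the form $\alpha z^d + \bar\alpha \bar z^d$ for a unique $\alpha \in \mathbb{C}^*$, and factors over $\mathbb{C}[z,\bar z]$ as $\alpha\prod_{j=0}^{d-1}(z - \mu_j\bar z)$, where $\mu_0,\dots,\mu_{d-1}$ are the $d$ distinct $d$-th roots of $-\bar\alpha/\alpha$; in particular all $|\mu_j| = 1$, and the multiset of roots is a coset of the cyclic subgroup $H_d \subset S^1$ of $d$-th roots of unity. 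Degree-zero polynomials are simply real constants.

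Denote by $S_{ij} \subset S^1$ the coset of roots associated to $p_{ij}$ (empty if $d_{ij}=0$). Comparing leading coefficients in $p_{11}p_{22} = p_{12}p_{21}$ gives $d_{11}+d_{22} = d_{12}+d_{21}$, and unique factorization in the UFD $\mathbb{C}[z,\bar z]$ forces the multiset identity
\[
S_{11} \sqcup S_{22} \;=\; S_{12} \sqcup S_{21}.
\]
This is the central combinatorial object to analyse.

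The main step is to show that if all four $d_{ij} \geq 1$ then we are in the trivial case. The key fact is that any two cosets $\alpha H_a,\beta H_b$ in $S^1$ intersect either in $\emptyset$ or in a coset of $H_{\gcd(a,b)}$. Permute so that $d_{11}$ is maximal. If $d_{11} > \max(d_{12},d_{21})$, multiplicity counting in the multiset identity forces $S_{11} \subset S_{12}\cup S_{21}$, hence
\[
d_{11} \;\leq\; \gcd(d_{11},d_{12}) + \gcd(d_{11},d_{21}) \;\leq\; \frac{d_{11}}{2} + \frac{d_{11}}{2} \;=\; d_{11},
\]
using that each gcd is a proper divisor of $d_{11}$; the chain of equalities then forces $d_{12}=d_{21}=d_{11}/2$ and hence $d_{22}=0$, a contradiction. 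Otherwise $d_{11}=d_{12}$ (so $d_{22}=d_{21}$); now $S_{11}$ and $S_{12}$ are cosets of the same subgroup and are therefore either equal, in which case $p_{11}$ is a real scalar multiple of $p_{12}$ (and consequently $p_{22}$ of $p_{21}$), or disjoint, in which case $S_{11}\subset S_{21}$ forces all four degrees to be equal and yields the trivial identification after relabelling.

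The remaining case is thus that some $d_{ij}=0$; by permutation assume $d_{22}=0$, and after rescaling $p_{22}=1$ and $p_{11}=p_{12}p_{21}$. Expanding the product in the $z,\bar z$ basis, the monomials $z^{d_{12}}\bar z^{d_{21}}$ and $z^{d_{21}}\bar z^{d_{12}}$ must vanish for $p_{11}$ to be harmonic: this rules out $d_{12}\neq d_{21}$ (both coefficients are otherwise nonzero) and, for $d_{12}=d_{21}=k$, yields precisely the relation $AC+BD=0$ after writing $p_{12}=A\re(z^k)+B\im(z^k)$ and $p_{21}=C\re(z^k)+D\im(z^k)$; indeed the obstruction term is $\frac{AC+BD}{2}|z|^{2k}$. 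The converse in both cases is immediate from these computations. I expect the main obstacle to be the careful multiplicity bookkeeping in the coset combinatorics, in particular verifying the multiplicity-two situation when $S_{11}$ and $S_{22}$ overlap.
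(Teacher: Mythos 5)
Your proposal is correct, but it takes a genuinely different route from the paper's. The paper writes each $p_{ij}$ in polar coordinates as $r_{ij}r^{n_{ij}}\cos(\varphi_{ij}+n_{ij}\varphi)$, expands the product identity with product-to-sum formulas, and extracts the constraints on the degrees and phases by pairing against Fourier modes on $S^1$ (orthogonality of the functions $\cos(A+k\varphi)$). You instead factor $\alpha z^d+\bar\alpha\bar z^d=\alpha\prod_j(z-\mu_j\bar z)$ over the UFD $\mathbb{C}[z,\bar z]$ and reduce everything to the multiset identity $S_{11}\sqcup S_{22}=S_{12}\sqcup S_{21}$ of root cosets; the gcd/coset-intersection count then dispatches the case of four positive degrees very cleanly (the chain $d_{11}\leq\gcd(d_{11},d_{12})+\gcd(d_{11},d_{21})\leq d_{11}$ is a nice touch), and in the degenerate case the harmonicity of $p_{12}p_{21}$ reduces to killing the mixed monomials, which for $d_{12}=d_{21}=k$ is exactly $\re(\alpha\bar\beta)=0$, i.e.\ $AC+BD=0$. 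Both arguments ultimately rest on the two-dimensionality of the space of degree-$d$ real harmonic homogeneous polynomials; yours is more algebraic and makes the failure in the complex-coefficient case transparent (the roots $\mu_j$ need no longer be distinct or lie on $S^1$, cf.\ the paper's remark with $p_{ij}=z^{n_{ij}}$), while the paper's Fourier method is the one that suggests the spherical-harmonics generalization to $n\geq 3$ discussed in its appendix. Two small points to write out in a final version: after permuting so that $d_{22}=0$ you should dispose explicitly of the sub-case where $d_{12}$ or $d_{21}$ also vanishes (immediately the trivial case), and you should record why equality of root cosets forces the constant relating $p_{11}$ to $p_{12}$ to be real (namely $-\bar\alpha_{11}/\alpha_{11}=-\bar\alpha_{12}/\alpha_{12}$ gives $\alpha_{11}\bar\alpha_{12}\in\mathbb{R}$).
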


For a proof, see the Appendix \ref{secapp}. We combine this Lemma with Lemma \ref{lem1} to reduce the problem to the case where one entry is equal to one.

\begin{prop}\label{reductionf22=1}
Assume $f_{ij}$ are smooth and $A$-harmonic\footnote{$u$ is $A$-harmonic if $\div A\nabla u = 0$. See Section \ref{sec5} for more details} for $i, j = 1, 2$ and satisfy $f_{11}f_{22} - f_{12}f_{21} = O(|x|^\infty)$ at zero. Then if $f_{ij}$ all vanish at zero, we must have $f_{11}f_{22} = f_{12} f_{21}$ on the whole domain.

Consequently, by Lemma \ref{lem1} we reduce the problem to the case where one entry is equal to $1$.
\end{prop}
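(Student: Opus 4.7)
We work in coordinates with $A(0)=I$, so by the earlier blow-up Proposition the leading Taylor polynomial at $0$ of any nontrivial $A$-harmonic function is a nonzero harmonic polynomial on $\mathbb{R}^2$. The plan is to deduce $\det F\equiv 0$ via a dichotomy followed by at most two determinant-preserving elementary operations on $F$. If some $f_{ij}\equiv 0$ then $\det F$ reduces to $\pm$ a product of two $A$-harmonic scalars vanishing to infinite order; since $\mathbb{R}[[x_1,x_2]]$ is an integral domain and every nontrivial $A$-harmonic function has finite vanishing order at $0$ (by the scalar SUCP), one factor must vanish identically and $\det F\equiv 0$. Henceforth assume every $f_{ij}$ is nontrivial, with finite vanishing order $N_{ij}\ge 1$ and nonzero leading harmonic Taylor polynomial $p_{ij}$.

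Matching the lowest-order Taylor coefficient of $f_{11}f_{22}-f_{12}f_{21}=O(|x|^\infty)$ forces $N_{11}+N_{22}=N_{12}+N_{21}$ and $p_{11}p_{22}=p_{12}p_{21}$. Since no $p_{ij}$ is constant, Lemma \ref{harmpoly} excludes its second alternative and puts us in the trivial case; after permuting rows or columns we may arrange columns proportional, $p_{11}=cp_{12}$ and $p_{21}=cp_{22}$, whence $N_{11}=N_{12}$ and $N_{21}=N_{22}$. Subtracting $c$ times column $2$ from column $1$ (a determinant-preserving right multiplication by an elementary matrix) yields a new matrix whose column-$1$ entries $g_1=f_{11}-cf_{12}$ and $g_2=f_{21}-cf_{22}$ have strictly higher vanishing orders at $0$. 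If either becomes infinitely flat, scalar SUCP makes it vanish identically, and a short chase through the preserved determinant (once more using scalar SUCP on the surviving factor) gives $\det F\equiv 0$.

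Otherwise both $g_1,g_2$ still have finite higher vanishing orders with nonzero leading harmonic polynomials, and we reapply Lemma \ref{harmpoly} to the new matrix. The ``columns proportional'' alternative is now ruled out purely for degree reasons (the new column-$1$ leading polynomial degree strictly exceeds that of $p_{12}$), so we must be in the ``rows proportional'' alternative, producing a constant $d$ with $p_{12}=dp_{22}$ (hence $N_{12}=N_{22}$) and $p_{g_1}=dp_{g_2}$. Subtracting $d$ times row $2$ from row $1$ yields a twice-reduced matrix; a short degree count shows that both alternatives of Lemma \ref{harmpoly} are now incompatible with what has been built, so some entry of this matrix must vanish identically. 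Tracing each such possibility back through the preserved determinant, together with one more invocation of scalar SUCP, forces $\det F\equiv 0$. The main obstacle is the careful bookkeeping of vanishing orders across the two elementary operations; the key miracle is that after only two steps both alternatives of Lemma \ref{harmpoly} become degree-incompatible, forcing the iteration to terminate. For complex-valued $f_{ij}$ one uses the analogous classification of complex harmonic polynomials on $\mathbb{R}^2$ via $z^k$ and $\bar z^k$ and the same degree-comparison argument.
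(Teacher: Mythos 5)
Your argument is correct and follows essentially the same route as the paper's own proof: normalize $A(0)=Id$, pass to leading harmonic Taylor polynomials, invoke Lemma \ref{harmpoly} (whose second alternative is excluded because every entry vanishes at $0$), and perform determinant-preserving column/row operations that strictly raise vanishing orders until both alternatives of the lemma become degree-incompatible, finishing with the scalar SUCP and the integral-domain property of formal power series -- indeed your explicit two-step reduction fills in the case $r=s$ that the paper dispatches in a single line (the only cosmetic slip is that passing from ``rows proportional'' to ``columns proportional'' requires a transposition rather than a row/column permutation). The one substantive caveat is your closing sentence on complex-valued $f_{ij}$: the paper's Appendix remark shows that the classification of quadruples of complex harmonic homogeneous polynomials with $p_{11}p_{22}=p_{12}p_{21}$ is genuinely more delicate (e.g.\ $p_{ij}=z^{n_{ij}}$ with $n_{11}+n_{22}=n_{12}+n_{21}$) and is left open there, so that extension does not go through ``the same'' way.
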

\begin{proof}
We can assume that the matrix $A$ is the identity at zero by a linear change of coordinates. Then the leading Taylor polynomials $p_{ij}$ of $f_{ij}$ are harmonic and satisfy $p_{11}p_{22} = p_{12}p_{21}$ by the condition on $f_{ij}$. If one of the entries vanishes to infinite order, then by the usual SUCP it is zero throughout and we easily conclude $f_{11}f_{22} = f_{12} f_{21}$ on the whole domain, after another use of the SUCP. 

By Lemma \ref{harmpoly} and since $f_{ij}$ all vanish at zero, we know we are in the second case; i.e. up to constants and permutations we may assume $p_{11} = p_{12}$ of degree $r > 0$ and $p_{22} = p_{21}$ of degree $s > 0$. We distinguish two cases: $r > s$ ($r < s$ is analogous) and $r = s$.

If $r > s$, then by subtracting the second column from the first column (i.e. after the linear transform $f_{11} \mapsto f_{11}' = f_{11} - f_{12}$ of degree $r'$ and $f_{21} \mapsto f_{21}' = f_{21} - f_{22}$ of degree $s'$), we increase the orders of vanishing of the first column, i.e. $r' > r$ and $s' > s$. Moreover, the determinant is unchanged and we notice that $r' > r > s$, which gives a contradiction (unless, $r'$ or $s'$ are equal to $\infty$, which we know how to deal with).

If $r = s$, by the same subtraction procedure we may reduce to the case where we have $r > s$. This finishes the proof of the first claim.

Finally, for the second claim note that if we have $f_{22}(0) \neq 0$, then by Lemma \ref{lem1} we may assume that locally $f_{22} \equiv 1$.
\end{proof}

Note that by Lemma \ref{isothermal2dsucp} we know how to solve the $\det A = 1$ case. The following Proposition tells us that if $u, v$ and $w$ satisfy $Pu = Pv = Pw = 0$ and $w - uv = O(|x|^\infty)$, then $v$ is the harmonic conjugate of $u$ up to constants -- but we do not use analyticity.

\begin{prop}\label{SUCP1}
Assume $u, v$ and $w$ are smooth (real or complex) and satisfy $Pu = Pv = Pw = 0$ with $\det A = 1$. Then $w - uv = O(|x|^\infty)$ implies $w = uv$ on the whole domain and that $v$ is the harmonic conjugate of $u$.
\end{prop}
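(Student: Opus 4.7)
The plan is to pass from the infinite-order vanishing of $w-uv$ to an infinite-order orthogonality of $du$ and $dv$ in the metric associated to $P$, introduce the harmonic conjugate $v'$ of $u$, and then run a Taylor-coefficient analysis at $0$ exploiting the rigid structure of two-dimensional harmonic polynomials to pin down $h := v-v'$ as a real scalar multiple of $v'$.

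The product rule combined with $Pu=Pv=0$ gives $P(uv) = 2\langle du, dv\rangle_g$, so $P(w-uv) = -2\langle du, dv\rangle_g$; since $w-uv = O(|x|^\infty)$, this forces $\langle du, dv\rangle_g = O(|x|^\infty)$. The condition $\det A = 1$ identifies $P$ with the Laplace--Beltrami operator of a metric $g$ with $\det g = 1$. I may assume $u$ is non-constant: if $u$ is constant then $uv$ is already $P$-harmonic, and $w = uv$ follows from the standard SUCP applied to $w - uv$ directly. In a simply connected neighbourhood of $0$ I define the harmonic conjugate $v'$ of $u$ via $dv' = \star du$, $Pv' = 0$, and after subtracting constants to achieve $u(0) = v(0) = v'(0) = 0$ and setting $h := v - v'$, one has $Ph = 0$, $h(0) = 0$, and $\langle du, dh\rangle_g = O(|x|^\infty)$ thanks to $\langle du, \star du\rangle_g = 0$.

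Next, change coordinates linearly so that $a^{ij}(0) = \delta^{ij}$; by the blow-up proposition already in the paper, leading Taylor polynomials of $P$-harmonic functions at $0$ are then Euclidean-harmonic. Writing $z = x_1 + i x_2$ and using the fact that two-dimensional Euclidean homogeneous harmonic polynomials of degree $N$ are precisely the real and imaginary parts of $\alpha z^N$, let $p = \Re(\alpha z^N)$ with $\alpha \neq 0$ be the leading polynomial of $u$; the leading polynomial of $v'$ is then the Euclidean harmonic conjugate $\tilde p = \Im(\alpha z^N)$. Suppose for contradiction $h \not\equiv 0$, with leading polynomial $q = \Re(\beta z^M)$, $\beta \neq 0$, of degree $M \geq 1$. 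A direct computation in complex coordinates gives the lowest-order Taylor part of $\langle du, dh\rangle_g$:
\[
\langle dp, dq\rangle_{\text{eucl}} \;=\; NM\,\Re\bigl(\alpha\bar\beta\,z^{N-1}\bar z^{M-1}\bigr),
\]
and this must vanish identically as a polynomial. For $N \neq M$ the two monomials $z^{N-1}\bar z^{M-1}$ and $z^{M-1}\bar z^{N-1}$ are distinct, so vanishing forces $\alpha\bar\beta = 0$, impossible. Hence $N = M$ and $\Re(\alpha\bar\beta) = 0$, i.e.\ $\beta = it\alpha$ for some $t \in \mathbb{R}$, so $q = \lambda\tilde p$ with $\lambda := -t \in \mathbb{R}$.

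Now form $h_2 := h - \lambda v'$: it satisfies $Ph_2 = 0$, $\langle du, dh_2\rangle_g = O(|x|^\infty)$, and its leading polynomial cancels, so its order of vanishing at $0$ is strictly greater than $N$. If $h_2$ were not identically zero it would have a finite order of vanishing $M_2 > N$ with a nonzero leading harmonic polynomial $q_2$, and re-running the Taylor argument on the pair $(u, h_2)$ would force $N = M_2$ --- contradiction. Hence $h_2 \equiv 0$, giving $v = (1+\lambda)v'$, a real scalar multiple of the harmonic conjugate of $u$ (plus the additive constants from the initial normalisation). In particular $\langle du, dv\rangle_g \equiv 0$ identically, so $w - uv$ is globally $P$-harmonic and vanishes to infinite order at $0$, and the standard SUCP for $\Delta_g$ yields $w \equiv uv$. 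The main obstacle is precisely the harmonic polynomial rigidity identity pinning down $q$ in terms of $p$; it is genuinely two-dimensional, and its failure in higher dimensions is consistent with the $n \geq 3$ case remaining open.
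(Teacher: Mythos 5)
Your argument is correct for real-valued solutions, and it takes a genuinely different route from the paper's proof. The paper works directly with the pointwise decomposition $du = \lambda \star dv + \mu dv$: the infinite-order orthogonality forces $\mu = O(|x|^\infty)$, applying $d$ and $d\star$ forces $\lambda = \lambda(0) + O(|x|^\infty)$, and the usual SUCP finishes; the degenerate case $dv(0)=0$ then needs a separate appeal to Lemma \ref{harmpoly} to see that $\lambda,\mu$ extend smoothly across the (isolated) critical point. Your proof instead subtracts the harmonic conjugate $v'$ and pins down the jet of $h=v-v'$ by the rigidity of planar homogeneous harmonic polynomials ($\re(\alpha z^N)$, $\im(\alpha z^N)$) applied to the leading term of $\langle du,dh\rangle$ — this is essentially the strategy only sketched in the remark following Theorem \ref{isothermal2dsucp}, carried out in full. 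What your route buys is a uniform treatment: the case where $du$ or $dv$ vanishes at the origin requires no separate argument, since you only ever compare leading Taylor polynomials; the two-step "subtract and re-blow-up" closing the induction is clean, and the computation $\langle dp,dq\rangle_{\mathrm{eucl}} = NM\,\re(\alpha\bar\beta z^{N-1}\bar z^{M-1})$ is exactly right. Two small points: the conclusion you reach, $v=(1+\lambda)v'$ up to constants, identifies $v$ with the conjugate only up to a real scalar, but this matches the precision of the paper's own proof (which likewise only gets $u-\lambda(0)u'$ constant) and suffices for $w\equiv uv$.

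One caveat you should flag explicitly: your harmonic-polynomial rigidity is a statement about \emph{real} polynomials, whereas the proposition allows complex-valued $u,v,w$. For complex coefficients the leading polynomials have the form $\alpha z^N+\beta\bar z^N$, and then $\langle dp,dq\rangle_{\mathrm{eucl}}$ can vanish with $N\neq M$ (e.g. $p=z^N$, $q=z^M$), so the step "$N=M$ and $q=\lambda\tilde p$" fails. This is not a defect specific to your argument — the paper's decomposition $du=\lambda\star dv+\mu dv$ equally degenerates for complex $v$ with $\partial_{\bar z}v(0)=0$, since then $\star dv$ is a scalar multiple of $dv$ at the origin — but as written your proof, like the paper's, establishes the real case.
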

\begin{proof}
We first consider the case where $dv(0) \neq 0$. Then we may write
\begin{align}\label{eqnlambdamu}
du = \lambda \star dv + \mu dv
\end{align}
for some functions $\mu$ and $\lambda$. The condition $w - uv = O(|x^\infty|)$ implies that $\langle{du, dv}\rangle_A = O(|x|^\infty)$ ($A$ corresponds to a Riemannian metric) and so $\mu = O(|x|^\infty)$. By applying $d$ and $d\star$ do this equation respectively, we get
\begin{align}
d\lambda \wedge \star dv = O(|x|^\infty) \quad \text{and} \quad d\lambda \wedge dv = O(|x|^\infty)
\end{align}
which in turn implies $\lambda = \lambda(0) + O(|x|^\infty)$. Therefore
\begin{align}
du = \lambda(0) \star dv + O(|x|^\infty)
\end{align}
But there is the harmonic conjugate $u'$ to $u$, so that $d(u - \lambda(0)u') = O(|x|^\infty)$ and so by the usual SUCP we get $u - \lambda(0)u'$ is constant, which finishes the proof.

If $dv(0) = 0$, then by assuming $A(0) = Id$ we may argue by the second case of Lemma \ref{harmpoly} to get that $\lambda$ and $\mu$ extend to zero smoothly, by Taylor's theorem (note also that the zeros of $dv$ are isolated if $v$ is non-constant\footnote{This is true by e.g. going to coordinate system given by Lemma \ref{isotropicoord}, reducing the problem to a first order equation for $\partial v$ and using the results of \cite{bar}}). Once we have equation \eqref{eqnlambdamu}, we argue in the same manner.
\end{proof}

The problem of generalising the above Proposition is that if $\det A \neq 1$, then the harmonic conjugate is $A^*$-harmonic and $A^* \neq A$ in general (see the next section for the definition of these concepts). In the next proposition, we reduce the problem to the \emph{isotropic} case, i.e. the case of $A = \lambda \times Id$ for positive $\lambda$.

\begin{prop}\label{mainreduction}
In proving the SUCP for the determinant and operators of divergence type where $A$ is symmetric, it is enough to consider the isotropic case.

By combining with Proposition \ref{reductionf22=1}, we are also reduced to the case where $f_{22} = 1$.
\end{prop}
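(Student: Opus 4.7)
The plan is to exploit the existence of isothermal coordinates in two dimensions. Given a pure-divergence operator $P = \partial_i(a^{ij}\partial_j)$ with $A = (a^{ij})$ smooth, symmetric and positive definite on $\Omega \subset \mathbb{R}^2$, I consider the associated Riemannian metric $g_{ij} := (A^{-1})_{ij}$. By the classical existence theorem for isothermal coordinates for smooth metrics in $2$D (see e.g.\ \cite{DTK}), one finds locally a smooth diffeomorphism $y = \Psi(x)$ in which $\tilde g_{ij}(y) = \lambda(y)\delta_{ij}$ for some smooth positive $\lambda$. Equivalently, the pulled-back contravariant tensor is $\tilde a^{kl}(y) = (\partial_i\Psi^k)(\partial_j\Psi^l)a^{ij}|_{x=\Psi^{-1}(y)} = \lambda(y)^{-1}\delta^{kl}$.

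Next I would verify that the pure-divergence form is preserved under this change of variables. Passing to the weak formulation, $Pf = 0$ is equivalent to $\int a^{ij}\partial_{x_i}u\,\partial_{x_j}f\,dx = 0$ for every test function $u$; applying the chain rule and the change of variables $x = \Psi^{-1}(y)$ turns this into $\int \mu(y)\delta^{kl}\partial_{y_k}\tilde u\,\partial_{y_l}\tilde f\,dy = 0$, with $\tilde u = u\circ\Psi^{-1}$, $\tilde f = f\circ\Psi^{-1}$, and $\mu = |\det D\Psi|^{-1}/\lambda$ smooth and positive. Hence $\tilde f$ satisfies the pure-divergence, isotropic equation $\tilde P\tilde f := \partial_{y_k}(\mu\,\delta^{kl}\partial_{y_l}\tilde f) = 0$, so the problem has been moved into the isotropic case with coefficient matrix $\tilde A = \mu \cdot \text{Id}$.

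Since $\det\tilde F = (\det F)\circ\Psi^{-1}$ vanishes to infinite order at $\Psi(0)$ exactly when $\det F$ does at $0$, the SUCP for $\det F$ in the original setting is equivalent to the SUCP for $\det\tilde F$ in the isotropic setting, giving the first assertion. For the second, I would apply Proposition \ref{reductionf22=1} in the new coordinates: if every entry of $\tilde F$ vanishes at the base point the determinant identity is forced, and otherwise, after relabelling, $\tilde f_{22}$ is nonvanishing locally and Lemma \ref{lem1} lets us divide through, replacing the coefficient matrix $\mu\cdot\text{Id}$ by $\tilde f_{22}^{\,2}\,\mu\cdot\text{Id}$, which remains isotropic, while the new $(2,2)$-entry is identically $1$. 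The main step requiring care is the bookkeeping of the Jacobian factor $|\det D\Psi|$ through the change of variables, but this is a routine computation; no essentially new analytic difficulty appears.
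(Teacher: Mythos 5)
Your argument is correct and follows essentially the same route as the paper: the paper simply invokes Lemma \ref{isotropicoord} (the quasiconformal/isothermal change of variables pushing $A$ to a scalar multiple of the identity), while you re-derive the same transformation by passing to isothermal coordinates for $g=A^{-1}$ and tracking the Jacobian factor through the weak formulation. Your explicit check that the Lemma \ref{lem1} division by $\tilde f_{22}$ preserves isotropy is a detail the paper leaves implicit, but it introduces no new idea.
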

\begin{proof}
Given a symmetric $A$, we have by Lemma \ref{isotropicoord} a diffeomorphism $F$ such that $F_*A = \tilde{a} Id$ for a positive function $\tilde{a}$ (here $F_*$ is the pushforward). This finishes the proof. 
\end{proof}

\begin{rem}\rm
Note that we do not need to have $\det A$ constant always, if $u, v$ and $w$ satisfy $P u = Pv = Pw = 0$ and $w = uv$, or $v$ to be conjugate to $u$. For example, we may take $A = \begin{pmatrix}
1 & 0\\
0 & a
\end{pmatrix}$ with $a(x, y) = \frac{f(x)}{g(y)}$ with $f$ and $g$ positive, and let $u(x, y) = x$, $v(x, y) = v(0) + \int_0^y g(t)dt$. Then $uv$ is also $A$-harmonic and we also have $\det A = \frac{f}{g}$ which is not constant for general $f$ and $g$. Moreover, we easily check that $y$ is the harmonic conjugate to $x$, so also in general $v$ is not the harmonic conjugate to $u$. 

It is tempting to say that we will have $w' = u'v'$, but this is also false: let $u = x$, $v = y$ and $w = xy$ for $a = 1$ as above. Then $u' = y$, $v' = -x$ and $w' = \frac{1}{2}(-x^2 + y^2)$, so $w' \neq u'v'$.
\end{rem}

\section{More general operators of divergence type}\label{sec5}

Following \cite{astala} (Chapter 16.) we consider the case of divergence type where $\det A$ is not necessarily constant or $A$ is not symmetric, by relating the study of elliptic equations in 2D to complex analysis. The main conclusions of this section are reduction results, i.e. we prove it is sufficient to consider special forms of $A$. We assume $A$ is bounded and strongly elliptic on $\Omega \subset \mathbb{C}$, i.e. there exists $K > 0$ such that
\begin{align}\label{strongellipticity}
\frac{1}{K}|\xi|^2 \leq \langle{A(z) \xi, \xi}\rangle \leq K |\xi|^2
\end{align}
for a.e. $z \in \Omega$ and all $\xi \in \mathbb{R}^2$. We call a function $u$ \emph{$A$-harmonic} if 
\[\div \big(A \nabla u\big) = 0\]
where we assume $A$ is just positive definite. This motivates the definition of a harmonic conjugate function $v$ to $u$:
\[\nabla v = J A \nabla u\]
Here $v$ exists and is uniquely determined up to constant. Note that $v$ is $A^*$-harmonic, where $A^* = -JA^{-1}J = \frac{A^T}{\det A}$, i.e.
\[\div \big(A^* \nabla v\big) = 0\]
Now the relation to complex analysis is yielded by defining $f = u + iv$ and noting that $f$ satisfies a \emph{Beltrami type equation}:
\begin{align}\label{beltrami}
\Lapl f = \frac{\partial f}{\partial \bar{z}} - \mu(z) \frac{\partial f}{\partial z} - \nu(z) \overline{\frac{\partial f}{\partial z}} = 0
\end{align}
where $\mu$ and $\nu$ depend only on $A$. Note that when $A = Id$, then $\mu = \nu = 0$ and we obtain the Cauchy-Riemann equations. The following Lemma (Theorem 16.1.6. of \cite{astala}) states precisely this connection:
\begin{lemma}\label{complexrep}
Let $\Omega$ be a simply connected domain and let $u \in W^{1, 1}_{loc}(\Omega)$ be a solution to
\[\div(A \nabla u) = 0\]
If $v \in W^{1,1}_{loc}(\Omega)$ is the harmonic conjugate to $u$ and $f = u + iv$ satisfies \eqref{beltrami} with:
\begin{align}\label{system1}
\mu &= \frac{1}{\det(I + A)}\big(A_{22} - A_{11} - i(A_{12} + A_{21})\big)\\
\nu &= \frac{1}{\det(I + A)}\big(1 - \det A + i(A_{12} - A_{21})\big)\label{system2}
\end{align}
Conversely, if $f \in W^{1, 1}(\Omega)$ satisfies \eqref{system1} and \eqref{system2}, then $u = \re(f)$ is $A$-harmonic and $v = \im(f)$ the harmonic conjugate of $u$. 
\end{lemma}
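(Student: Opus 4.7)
The plan is a direct computation: translate the componentwise relation defining the harmonic conjugate into an equation involving the Wirtinger derivatives of $f = u + iv$, and then read off the coefficients $\mu$ and $\nu$.

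First I would write $\nabla v = JA\nabla u$ out entry by entry. With $J$ the rotation by $\pi/2$, this is the first-order linear system
\begin{align*}
\partial_x v &= -A_{21}\partial_x u - A_{22}\partial_y u, \\
\partial_y v &= A_{11}\partial_x u + A_{12}\partial_y u,
\end{align*}
which generalises the classical Cauchy-Riemann system (recovered for $A = \mathrm{Id}$). A useful sanity check at this stage is that the compatibility condition $\partial_y(\partial_x v) = \partial_x(\partial_y v)$ is exactly $\div(A\nabla u) = 0$, so on a simply connected domain the harmonic conjugate $v$ exists in $W^{1,1}_{loc}$ precisely when $u$ is $A$-harmonic.

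Next, using $\partial_x = \partial_z + \partial_{\bar z}$ and $\partial_y = i(\partial_z - \partial_{\bar z})$ together with $u = \re f$, $v = \im f$, I would express each of $\partial_x u, \partial_y u, \partial_x v, \partial_y v$ in terms of $\partial_z f$, $\partial_{\bar z} f$ and their conjugates. Substituting into the two real equations above and adding the second to $i$ times the first collapses everything into one complex linear relation among $\partial_{\bar z} f$, $\partial_z f$ and $\overline{\partial_z f}$, the coefficient of $\overline{\partial_{\bar z} f}$ dropping out by construction. Normalising the coefficient of $\partial_{\bar z} f$ to equal $1$ (legitimate because strong ellipticity \eqref{strongellipticity} forces the symmetric part of $I+A$ to be positive definite, hence $\det(I+A) > 0$) yields an equation of Beltrami type $\partial_{\bar z} f - \mu \partial_z f - \nu \overline{\partial_z f} = 0$, and a direct identification produces exactly the formulas \eqref{system1}--\eqref{system2}.

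For the converse direction, the Beltrami equation with these specific $\mu$ and $\nu$ is equivalent, by running the same algebra backwards, to the componentwise system above. Taking real and imaginary parts recovers $\nabla v = JA\nabla u$; differentiating and using the compatibility observation from the first paragraph then gives $\div(A\nabla u) = 0$, so $u = \re f$ is $A$-harmonic, and $v = \im f$ is by construction the harmonic conjugate.

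The only real obstacle is careful bookkeeping when matching coefficients: one is essentially inverting a $2\times 2$ complex linear system, and \eqref{system1}--\eqref{system2} is the unique normalised solution. Ellipticity is not used beyond ensuring $\det(I+A)\neq 0$ (so the normalisation step makes sense) and, as a consequence, the bound $|\mu|+|\nu| < 1$ that makes \eqref{beltrami} an elliptic Beltrami equation rather than a degenerate one.
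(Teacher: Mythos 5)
The paper does not actually prove this lemma: it is quoted verbatim as Theorem 16.1.6 of \cite{astala}, so your direct verification is welcome and is, in substance, the standard argument from that reference. The strategy is right and the formulas \eqref{system1}--\eqref{system2} do come out of it (e.g.\ for diagonal $A=\mathrm{diag}(a,b)$ one finds $\mu=\frac{b-a}{(1+a)(1+b)}$, $\nu=\frac{1-ab}{(1+a)(1+b)}$, as claimed), and your observations that the compatibility condition for $\nabla v = JA\nabla u$ is exactly $\div(A\nabla u)=0$ and that strong ellipticity gives $\det(I+A)>0$ are both correct and are the only non-computational inputs. One step is stated too optimistically, though: the single combination ``(second equation) $+\,i\,$(first equation)'' does \emph{not} make the coefficient of $\overline{\partial_{\bar z}f}$ vanish by construction. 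Already for $A=\mathrm{Id}$ that combination produces $\overline{\partial_{\bar z}f}=0$ rather than $\partial_{\bar z}f=0$, and for general $A$ all four of $\partial_z f,\partial_{\bar z}f,\overline{\partial_z f},\overline{\partial_{\bar z}f}$ appear with nonzero coefficients. What is true is that this complex equation $E=0$ together with its conjugate $\overline{E}=0$ (which carries no extra information, the original system being real) spans a two-real-dimensional space of relations, and within that span there is a unique normalised combination $\sigma E+\tau\overline{E}=0$ killing $\overline{\partial_{\bar z}f}$; finding $\sigma,\tau$ is precisely the $2\times 2$ inversion you mention in your last paragraph, and it succeeds because $\det(I+A)\neq 0$. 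With that correction the argument, including the reversal for the converse, is complete.
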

There are also formulas expressing the entries of $A$ in terms of $\mu$ and $\nu$,  but we do not need them here. Note just that $A$ is symmetric if and only if $\nu$ is real valued and that $\det A = 1$ if and only if $\nu$ is pure imaginary; so $A$ is symmetric and has $\det A = 1$ if and only if $\nu = 0$.

Another ingredient we will need is a version of \emph{Stoilow factorisation} for operators of the form \eqref{beltrami}. The statement in general is that every $K$-quasiregular map factorizes as a composition of a harmonic map and a quasiconformal homeomorphism. Here, a homeomorphism $f: \Omega \to \Omega'$ in $W_{loc}^{1,2}$ is \emph{$K$-quasiconformal} if and only if $\frac{\partial f}{\partial \bar{z}} = \mu(z) \frac{\partial f}{\partial z}$ for almost every $z \in \Omega$, where $\lVert{\mu}\rVert_{\infty} \leq \frac{K - 1}{K + 1}$.\footnote{So in particular, $f$ is $1$-quasiconformal if and only if it is conformal, i.e. holomorphic and injective.} Moreover, a mapping $f$ is \emph{$K$-quasiregular} if all hypothesis hold as above, except that we do not ask that $f$ is a homeomorphism.\footnote{For instance, this result shows a few nice things about quasiregular maps: they are open and discrete, local $\frac{1}{K}$-H\"older, differentiable with non-vanishing Jacobian a.e..}

More precisely, we will need the following form of Stoilow factorisation for general elliptic systems (Theorem 6.1.1. in \cite{astala}):
\begin{theorem}\label{Stoilow}
Let $f \in W^{1,2}_{loc}(\Omega)$ be a homeomorphic solution to \eqref{beltrami}, where we assume $|\mu| + |\nu(z)| \leq k <1$. Then any other solution $g \in W^{1, 2}(\Omega)$ to $\Lapl g = 0$ takes the form $g = F\big(f(z)\big)$, where $F$ is a $K^2$-quasiconformal mapping satisfying
\begin{align}\label{reducedbeltrami}
\frac{\partial F}{\partial \bar{w}} = \lambda(w) \im\Big(\frac{\partial F}{\partial \bar{w}}\Big)
\end{align}
for $w \in f(\Omega)$, where (here $z = f^{-1}(w)$)
\begin{align*}
\lambda(w) = \frac{-2i \nu(z)}{1 + |\nu(z)|^2 - |\mu(z)|^2}
\end{align*}
It is easily seen that $|\lambda| \leq \frac{2k}{k^2 + 1} < 1$. Conversely, for any such $F \in W^{1, 2}(\Omega)$ satisfying \eqref{reducedbeltrami}, $g = F \circ f$ solves $\Lapl g = 0$.
\end{theorem}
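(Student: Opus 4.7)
The plan is to follow the standard Stoilow strategy: reduce by composition with the quasiconformal homeomorphism $f$ and verify that the resulting map $F := g \circ f^{-1}$ satisfies a simpler equation. First observe that the hypothesis $|\mu| + |\nu| \leq k < 1$ lets us rewrite \eqref{beltrami} as a classical Beltrami equation: for any $W^{1,2}_{loc}$ solution $h$ we have $\partial_{\bar z} h = \tilde\mu_h \,\partial_z h$ with $\tilde\mu_h := \mu + \nu\,\overline{\partial_z h}/\partial_z h$ (well-defined a.e.\ since $\partial_z h \neq 0$ a.e.\ for non-constant solutions of uniformly elliptic Beltrami-type equations) and $|\tilde\mu_h| \leq k$. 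Hence both $f$ and $g$ are $K$-quasiregular with $K = (1+k)/(1-k)$; in particular $f$ is a $K$-quasiconformal homeomorphism, $f^{-1}$ is $K$-quasiconformal, and $F := g \circ f^{-1}$ is $K^2$-quasiregular by the usual composition bound, so in particular $F \in W^{1,2}_{loc}(f(\Omega))$.

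Next I would derive the equation for $F$. The Wirtinger chain rule applied to $g(z) = F(f(z))$ gives
\begin{align*}
\partial_z g &= (\partial_w F)\,\partial_z f + (\partial_{\bar w} F)\,\overline{\partial_{\bar z} f}, \\
\partial_{\bar z} g &= (\partial_w F)\,\partial_{\bar z} f + (\partial_{\bar w} F)\,\overline{\partial_z f}.
\end{align*}
Substituting these into $\partial_{\bar z} g = \mu\,\partial_z g + \nu\,\overline{\partial_z g}$ and then using $\partial_{\bar z} f = \mu\,\partial_z f + \nu\,\overline{\partial_z f}$ to eliminate all $f_{\bar z}$-terms leaves a real-linear relation in $F_w, F_{\bar w}, \overline{F_w}, \overline{F_{\bar w}}$ whose coefficients involve only $\mu, \nu$ and the factor $\overline{\partial_z f}$. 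Dividing by $\overline{\partial_z f}$ (non-zero a.e.) and grouping real and imaginary parts puts the equation in the form \eqref{reducedbeltrami}, with $\lambda$ depending only on the original data $\mu, \nu$ and given by the stated formula $\lambda = -2i\nu/(1 + |\nu|^2 - |\mu|^2)$.

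For the bound, one verifies $|\lambda| < 1$ directly: from $|\mu| \leq k - |\nu|$ we get $|\mu|^2 \leq (k - |\nu|)^2$, hence $1 + |\nu|^2 - |\mu|^2 \geq 1 - k^2 + 2k|\nu|$; viewing $|\lambda| = 2|\nu|/(1 + |\nu|^2 - |\mu|^2)$ as a function of $|\nu| \in [0, k]$, an elementary monotonicity argument shows the maximum is $2k/(1+k^2) < 1$, attained at $|\nu| = k$. The converse direction — that any $F$ solving \eqref{reducedbeltrami} gives $g := F \circ f$ solving $\Lapl g = 0$ — follows by running the same chain rule computation in reverse.

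The main obstacle is the algebraic bookkeeping of step two: the substitution produces eight Wirtinger-derivative terms weighted by $\mu, \nu$ and the Jacobian factors $\partial_z f, \overline{\partial_z f}$, and the key combinatorial fact is that after using the equation for $f$ the $f$-dependence collapses into a single multiplicative factor that can be divided out, leaving coefficients that depend only on $\mu$ and $\nu$. This is really a manifestation of the covariance of the Beltrami-type equation under quasiconformal changes of coordinates, but must be checked by a direct calculation. A secondary subtlety is the a.e.\ non-vanishing of $\partial_z f$, which is needed both in the reduction to a classical Beltrami equation and to justify dividing by $\overline{\partial_z f}$ — this is a standard consequence of the Stoilow factorisation for quasiregular maps (non-vanishing Jacobian a.e.) applied to $f$.
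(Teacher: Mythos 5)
This statement is quoted verbatim from Theorem 6.1.1 of \cite{astala}; the paper gives no proof of its own, so your proposal can only be measured against the standard argument, which it follows in outline and which is essentially correct. One step is misdescribed, however. After substituting the Wirtinger chain rule into $\partial_{\bar z}g = \mu\,\partial_z g + \nu\,\overline{\partial_z g}$ and eliminating $f_{\bar z}$ via the equation for $f$, what remains is
\begin{align*}
\big[(1-|\mu|^2)\,\overline{f_z} - \mu\bar\nu\, f_z\big]\,F_{\bar w} \;-\; \nu f_{\bar z}\,\overline{F_{\bar w}} \;=\; -2i\nu\,\overline{f_z}\,\im(F_w),
\end{align*}
which involves both $f_z$ and $\overline{f_z}$, so the $f$-dependence does \emph{not} collapse into a single multiplicative factor that can be divided out. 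Instead one must invert the real-linear map $X\mapsto aX-b\bar X$ (invertible because $|a|^2-|b|^2>0$, a consequence of $|\mu|+|\nu|\le k<1$) to solve for $F_{\bar w}$; only then does the $f$-dependent quantity $(1-|\mu|^2-|\nu|^2)|f_z|^2 - 2\re(\mu\bar\nu f_z^2)$ cancel between the numerator and $|a|^2-|b|^2$, leaving $\lambda=-2i\nu/(1+|\nu|^2-|\mu|^2)$. This is precisely the ``algebraic bookkeeping'' you flag, but the mechanism is a cancellation after inverting a real-linear relation, not division by $\overline{\partial_z f}$. Two further points you silently (and correctly) repair: the right-hand side of \eqref{reducedbeltrami} should read $\im\big(\partial F/\partial w\big)$, and $F=g\circ f^{-1}$ is in general only $K^2$-quasiregular rather than quasiconformal unless $g$ is itself injective. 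The remaining ingredients of your sketch --- the reduction to a linear Beltrami equation via $\tilde\mu_h$, the a.e.\ nonvanishing of $\partial_z f$, the monotonicity argument giving $|\lambda|\le 2k/(1+k^2)$, and the converse by direct substitution --- are standard and correct.
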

We call the equation \eqref{reducedbeltrami} the \emph{reduced Beltrami equation}. We need these two results for the following:

\begin{lemma}[Variant of the isothermal coordinates]\label{genisothermal}
Let $A$ be smooth and strongly elliptic, i.e. satisfying \eqref{strongellipticity}. For any $p \in \Omega$, there exists a $C^\infty$ coordinate chart $\varphi: p \in \Omega' \to \mathbb{C}$, such that for any solution $u$ to
\[\div \big(A \nabla u\big) = 0\]
can be written as $u = v \circ \varphi$, where $v$ satisfies $\div \big(\widetilde{A} \nabla v\big) = 0$ with $\widetilde{A} = \begin{pmatrix}
1 & \widetilde{A}_{12}\\
0 & \widetilde{A}_{22}
\end{pmatrix}$, where (for $\varphi(z) = w$)
\begin{align}\label{eqnB12}
\widetilde{A}_{12}(w) = \frac{-2\im(\lambda)(w)}{1 - \re(\lambda)(w)} \quad \text{ and } \quad \widetilde{A}_{22}(w) = \frac{1 + \re(\lambda)(w)}{1 - \re(\lambda)(w)}
\end{align}
Here $\lambda(w)$ is given by \eqref{lambda}, where we insert $\mu(z)$ and $\nu(z)$ from the equations \eqref{system1} and \eqref{system2}.

Moreover, if $A$ is symmetric then $\widetilde{A}_{22} = 1$; if $\det A = 1$, then $\widetilde{A}_{12} = 0$.
\end{lemma}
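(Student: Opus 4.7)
The strategy is to build $\varphi$ as a local homeomorphic solution of the Beltrami equation \eqref{beltrami} attached to $A$, and then to apply the Stoilow factorisation of Theorem \ref{Stoilow} to represent every $A$-harmonic function, via its complex representative from Lemma \ref{complexrep}, in the required form.

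First I would construct the chart. Near $p$ choose a smooth $A$-harmonic function $u_0$ with $\nabla u_0(p) \neq 0$ (for instance, take the $A$-harmonic extension of a non-constant linear function on a small disk around $p$; smoothness follows from classical elliptic regularity since $A \in C^\infty$). Let $u_0^*$ be the harmonic conjugate supplied by Lemma \ref{complexrep} and set $\varphi = u_0 + i u_0^*$. Using the relation $\nabla u_0^* = J A \nabla u_0$, a direct computation identifies the Jacobian of $\varphi$ at each point as $\langle A\nabla u_0, \nabla u_0\rangle$, which is strictly positive at $p$ by strong ellipticity \eqref{strongellipticity}. The inverse function theorem then gives a neighbourhood $\Omega' \ni p$ on which $\varphi$ is a smooth diffeomorphism, and by Lemma \ref{complexrep}, $\varphi$ solves the Beltrami equation \eqref{beltrami} on $\Omega'$ with $\mu,\nu$ as in \eqref{system1}-\eqref{system2}.

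Next, given any smooth $A$-harmonic $u$ on $\Omega'$ with harmonic conjugate $u^*$, the function $g = u + iu^*$ is another Beltrami solution. Theorem \ref{Stoilow} applied with $f = \varphi$ then yields $g = F \circ \varphi$, where $F$ satisfies the reduced Beltrami equation \eqref{reducedbeltrami} on $\varphi(\Omega')$; in particular $u = (\re F)\circ\varphi$. Setting $v := \re F$, I would view \eqref{reducedbeltrami} itself as a Beltrami-type equation of the form \eqref{beltrami} with effective coefficients, so that $\im F$ plays the role of the harmonic conjugate of $v$ with respect to a new matrix $\widetilde{A}$. The converse direction of Lemma \ref{complexrep} then gives $\div(\widetilde{A}\nabla v) = 0$ for a unique $\widetilde{A}$ obtained by algebraically inverting \eqref{system1}-\eqref{system2}. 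Matching the real and imaginary parts of \eqref{reducedbeltrami} against the harmonic-conjugate identity $\nabla(\im F) = J \widetilde{A}\nabla v$ singles out the normalisation $\widetilde{A}_{11} = 1$, $\widetilde{A}_{21} = 0$ together with the explicit formulas \eqref{eqnB12} for $\widetilde{A}_{12}, \widetilde{A}_{22}$ in terms of $\lambda$.

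The final two statements follow directly from \eqref{system2} and the definition of $\lambda$: if $A$ is symmetric then $A_{12} = A_{21}$ forces $\nu \in \mathbb{R}$, so $\lambda = -2i\nu/(1 + |\nu|^2 - |\mu|^2)$ is purely imaginary, whence $\re\lambda = 0$ and $\widetilde{A}_{22} = 1$; if $\det A = 1$ then $\re\nu = 0$, so $\nu$ is purely imaginary and $\lambda$ is real, giving $\im\lambda = 0$ and $\widetilde{A}_{12} = 0$. The main technical obstacle is the algebraic inversion of \eqref{system1}-\eqref{system2} along the particular normal form $\widetilde{A}_{11}=1, \widetilde{A}_{21}=0$: this is a different reduction from the familiar $\mu = 0$ Stoilow normal form, and careful bookkeeping with the conventions of \cite{astala} is needed to recover the precise formulas \eqref{eqnB12} rather than another equally valid canonical form.
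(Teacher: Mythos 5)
Your proposal is correct and follows essentially the same route as the paper: build the chart from an $A$-harmonic function with non-vanishing gradient and its conjugate, apply the Stoilow factorisation of Theorem \ref{Stoilow} to get the reduced Beltrami equation with $\tilde\mu=-\tilde\nu=\lambda/2$, and invert \eqref{system1}--\eqref{system2} to read off $\widetilde{A}$. The only cosmetic difference is that you justify local invertibility via the Jacobian computation $\langle A\nabla u_0,\nabla u_0\rangle>0$, which the paper leaves implicit.
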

\begin{proof}
This is clear by combining Lemma \ref{complexrep} and Theorem \ref{Stoilow}. Consider the harmonic conjugate $u'$ of $u$ and $f = u + iu'$. As a first step, similarly to the proof of existence of isothermal coordinates (which are a special case)\footnote{By taking $A$ symmetric and with $\det A = 1$, we recover the isothermal charts.}, we take an $A$-harmonic function $u_1$ with $u_1(p) = 0$ and $\nabla u_1(p) \neq 0$. Then by taking the harmonic conjugate of $u_1'$, we get a coordinate system locally and define $f_1 = u_1 + iu_1'$, which is a local homeomorphism such that $F: = f \circ f_1^{-1}$ satisfies the reduced Beltrami equation \eqref{reducedbeltrami}. By noting that
\begin{align}\label{lambda}
\im \Big(\frac{\partial}{\partial z}\Big) = \frac{1}{2}\Big(\frac{\partial}{\partial z} - \overline{\frac{\partial}{\partial z}}\Big)
\end{align}
we have $\tilde{\mu}(w) = -\tilde{\nu}(w) = \frac{\lambda(w)}{2}$ in these new coordinates, where $\lambda(w)$ is given by \eqref{lambda}.

By comparing the coefficients of the new matrix $\widetilde{A}$ in the equations \eqref{system1} and \eqref{system2} we get $\widetilde{A}_{21} = 0$ and $\widetilde{A}_{22} - \widetilde{A}_{11} + 1 - \widetilde{A}_{11}\widetilde{A}_{22} = 0$, which makes us able to assume $\widetilde{A}_{11} = 1$. Then it is easy to get \eqref{eqnB12} by taking the real and imaginary parts of \eqref{system1} for example.

Finally, from \eqref{system2} we know that $\nu$ is real if and only if $A$ is symmetric; $\nu$ is pure imaginary if and only if $A$ has $\det A = 1$. The last claim now follows from equation \eqref{lambda}. 
\end{proof}

We separately state a result in the same vein as the previous Lemma; it gives a coordinate system such that $A$ is isotropic. The proof is similar as for the previous two results. For a proof, see the proof of Lemma 3.1. in \cite{ALP} and references therein.
 
\begin{lemma}\label{isotropicoord}
Assume $A$ is symmetric. Given a point $p \in \Omega$, there exists a local diffeomorphism such that $F_*A = \tilde{a} \times Id$, where $\tilde{a}(z) = \det(A(F^{-1}(z)))^{\frac{1}{2}}$. Here $F_*$ denotes the pushforward and $F$ is a solution to the Beltrami equation
\begin{align*}
\frac{\partial F}{\partial \bar{z}} = \mu(z) \frac{\partial F}{\partial z}
\end{align*} 
Here $\mu$ is determined explicitly by $A$ and is given by
\begin{align*}
\mu(z) = \frac{g_{11}(z) - g_{22}(z) + 2ig_{12}(z)}{2 + g_{11}(z) + g_{22}(z)}
\end{align*}
where $g_{ij}$ are the entries of the matrix $G = \sqrt{\det A} A^{-1}$.
\end{lemma}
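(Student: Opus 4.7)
The plan is to realise $A$ (up to a scalar) as a Riemannian metric and then invoke the classical existence of isothermal coordinates. First I would set $G := \sqrt{\det A}\, A^{-1}$; it is smooth, symmetric, positive definite, and has $\det G = 1$ in two dimensions. Writing $g_{ij}$ for its entries matches the notation of the statement, and $G$ should be viewed as a unit-volume Riemannian metric on $\Omega$.

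Next I would appeal to the Korn--Lichtenstein theorem on the existence of isothermal coordinates for a smooth Riemannian metric in dimension two. This produces a local diffeomorphism $F$ near $p$ such that in the new coordinates $w=F(z)$ the metric $G$ is conformally flat; equivalently, $(DF)^T DF = \rho(z)^{-1}\, G(z)$ for some positive smooth $\rho$. Taking determinants and using $\det G = 1$ forces $\rho = |\det DF|$, so $(DF)^T DF = |\det DF|\, G$. Concretely, $F$ is constructed by solving the Beltrami equation $\partial_{\bar z} F = \mu(z)\, \partial_z F$, where $\mu$ is the Beltrami coefficient of $G$. A short direct computation in complex coordinates, using $\sqrt{\det G}=1$, gives exactly
\[
\mu(z) \;=\; \frac{g_{11}(z)-g_{22}(z)+2i g_{12}(z)}{2+g_{11}(z)+g_{22}(z)},
\]
which is the formula in the statement. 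Existence is provided by the measurable Riemann mapping theorem, and smoothness of $F$ then follows from the smoothness of $\mu$ and standard elliptic regularity for the $\bar\partial$-type operator in \eqref{beltrami}.

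The last step is to check that this $F$ renders the divergence-form matrix isotropic with the claimed scalar. Writing $\mathrm{div}(A\nabla u)=0$ in the weak form $\int \langle A\nabla u, \nabla\phi\rangle \, dz = 0$ and performing the change of variables $w=F(z)$ with $u = v\circ F$, $\phi = \psi\circ F$, one obtains the weak form of $\mathrm{div}(\widetilde A \nabla v)=0$ with
\[
\widetilde{A}(w) \;=\; \frac{(DF)\, A\, (DF)^T}{|\det DF|}\bigg|_{z=F^{-1}(w)}.
\]
The isothermal identity $(DF)^T DF = |\det DF|\, G$ rearranges to $(DF)\, G^{-1} (DF)^T = |\det DF|\, I$; combined with $G^{-1} = A/\sqrt{\det A}$, this gives $\widetilde{A}(w) = \sqrt{\det A(F^{-1}(w))}\, I$, which is exactly $F_*A = \tilde a\,\mathrm{Id}$ with the prescribed $\tilde a$.

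The only nontrivial input is the existence of smooth isothermal coordinates for $G$, i.e.\ solving the Beltrami equation for a smooth Beltrami coefficient; this is the core classical content and is what \cite{ALP} cites. Everything after that is tensor algebra, with the one mild subtlety being that the divergence-form matrix transforms as a density (hence the factor $|\det DF|^{-1}$), a point handled automatically by the weak formulation.
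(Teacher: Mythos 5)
Your proposal is correct and follows essentially the same route as the proof the paper points to (Lemma 3.1 of \cite{ALP}): view $G=\sqrt{\det A}\,A^{-1}$ as a unit-determinant metric, solve the Beltrami equation with the stated coefficient $\mu$ to obtain isothermal coordinates, and then check via the density transformation law $F_*A=(DF)\,A\,(DF)^T/|\det DF|$ that the pushed-forward matrix is $\sqrt{\det A}\;Id$. The tensor computation $(DF)^TDF=|\det DF|\,G \Rightarrow (DF)G^{-1}(DF)^T=|\det DF|\,Id$ is exactly the right verification, and the only detail left implicit --- the derivation of the explicit formula for $\mu$ from $\det G=1$ --- is standard and matches the statement.
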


\subsection{Non-self adjoint equations.}\label{reductions} We remark that by the methods of G. Alessandrini \cite{ales}, where he proves the SUCP properties for possibly non-self adjoint elliptic operators of divergence type with lower order coefficients, we may reduce the case of more general linear equations to an equation of the divergence type. It is based on a reduction method as in Lemma \ref{reductionlemma} and Lemma \ref{lem1}.
In fact, Alessandrini shows for possibly non-symmetric $A$, that we may introduce two positive multipliers $m, w$, such that the equation
\begin{align}
Lu = -\div(A\nabla u + u B) + C\nabla u + du
\end{align}
reduces to a simpler equation, in the following sense. Here $A$ is $2 \times 2$ matrix, $B$ and $C$ are vector functions and $d$ a function. We have for any $v$
\begin{align}
\widehat{L}v = w L(mv)
\end{align}
where $\widehat{L}u = -\div(\widehat{A}\nabla u + u \widehat{B})$. Again, this provides a reduction procedure for our problem and makes it sufficient to consider operators of the form $\widehat{L}$.

\section{The case $n = 2$ for the twisted Laplacian}\label{sec6}

Here we prove the SUCP for a special class of matrix operators on $\mathbb{R}^2$ which satisfy an additional equation; namely, we consider connections Laplacians of the form $P = d_A^*d_A$ for $A$ a connection, i.e. a matrix of one forms, where we assume the Yang-Mills equation \eqref{YMeqn} for $A$. The motivation is explained in the introduction.

\begin{lemma}\label{YMlemma}
Let $(\Omega, g) \subset \mathbb{R}^2$ be a domain equipped with a smooth metric $g$. Equip $\Omega \times \mathbb{C}^m$ for $m \in \mathbb{N}$ with a Yang-Mills connection $A$\footnote{Recall that $A$ is \emph{Yang-Mills} if $D_A^*F_A = 0$; here $D_A$ is the natural induced connection on the endomorphism bundle and $F_A = dA + A \wedge A$ is the curvature. See also the introduction.}. Assume $F \in C^\infty(\Omega, \mathbb{C}^{m \times m})$ satisfy $d_A^*d_A F = 0$. Then $\det F$ satisfies the SUCP, and so the WUCP.
\end{lemma}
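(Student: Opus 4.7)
The plan is to mimic the strategy of Theorem~\ref{isothermal2dsucp}: pass to isothermal coordinates, use the Yang-Mills equation to put $A$ in a particularly simple gauge, and close with a Taylor-polynomial argument based on the harmonicity of leading Taylor polynomials of solutions.

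Choosing isothermal coordinates around any $p \in \Omega$ with $g = \mu(dx^2 + dy^2)$, $\mu > 0$ smooth, and using the unitarity $A_i^* = -A_i$ together with the formula $d_A^*d_A = \Delta_g - 2g^{ij}A_i\partial_j + d^*A - g^{ij}A_iA_j$ from the introduction, a direct computation yields
\[
d_A^*d_A F \;=\; -\mu^{-1}\bigl(\partial_i + A_i\bigr)^2 F,
\]
so the conformal factor cancels and the equation becomes the flat-background equation $(\partial_i + A_i)^2 F = 0$. The Yang-Mills equation $D_A^*F_A = 0$ in two dimensions is equivalent to $D_A(*F_A) = 0$, so $*F_A$ is $D_A$-parallel. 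Parallel transporting from $p$ an orthonormal frame diagonalizing the skew-Hermitian $*F_A(p)$ yields on a simply connected neighborhood $\Omega' \ni p$ a $D_A$-parallel orthogonal decomposition $E|_{\Omega'} = \bigoplus_j E_j$ with $*F_A|_{E_j} = ic_j\,\mathrm{Id}_{E_j}$ for $c_j \in \mathbb{R}$. On each $E_j$ the trace-free part of $A$ is a flat $\mathfrak{su}(k_j)$-connection, which on the simply connected $\Omega'$ is trivializable by a smooth unitary gauge transformation; since this only multiplies $\det F$ by a non-vanishing smooth factor, SUCP is preserved. In the resulting gauge $A = \bigoplus_j a_j\,\mathrm{Id}_{E_j}$, with $U(1)$-connections $a_j$ satisfying $da_j = ic_j\mu\,dx\wedge dy$, and the matrix equation decouples block-wise into the scalar abelian equations $(\partial_i + a_{j,i})^2 f = 0$ for each entry $f$ in the $j$-th row block of $F$.

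To conclude, I will follow the Taylor-polynomial method outlined in the Remark after Theorem~\ref{isothermal2dsucp}. By the blow-up argument from the Proposition of Section~\ref{sec4}, if $f$ solves $(\partial_i + a_{j,i})^2 f = 0$ and vanishes to order $N$ at $p$, the rescaled function $f_r(x) := r^{-N} f(rx)$ converges as $r \to 0$ to the leading Taylor polynomial $p_f^{(N)}$ of $f$, and the rescaled equation converges to $\Delta_0 p_f^{(N)} = 0$ (the $a_j$-dependent lower-order terms scale away). Thus the leading Taylor polynomials of all entries of $F$ are real harmonic polynomials in the Euclidean sense. Assuming $\det F = O(|x|^\infty)$ at $p$ then translates into a vanishing-determinant identity among these harmonic polynomials; iterating the column-reduction argument of Proposition~\ref{reductionf22=1} and Lemma~\ref{harmpoly}, now applied to $m \times m$ matrices of real harmonic homogeneous polynomials, performs successive column operations that strictly raise the orders of vanishing while preserving the harmonic-leading-polynomial structure, ultimately forcing $\det F \equiv 0$.

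The main obstacle is the final step: the algebraic reduction in Lemma~\ref{harmpoly} and Proposition~\ref{reductionf22=1} is established only for $m = 2$, and extending it to $m \times m$ matrices of real harmonic homogeneous polynomials with vanishing determinant requires an inductive argument on both $m$ and the degrees of the leading polynomials. The classification step — showing that such a matrix can always be reduced by a column operation to one of strictly smaller effective size while keeping the leading Taylor polynomials harmonic — is the key algebraic content; the scalar SUCP for individual entries (Aronszajn-type theory for smooth-coefficient elliptic equations) is then automatic and closes the induction.
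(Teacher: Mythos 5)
Your reduction in the first two paragraphs is correct, and it is genuinely different from the paper's argument. The paper passes to isothermal coordinates and then to the Coulomb (or harmonic) gauge, argues that the Yang--Mills system becomes an elliptic equation $\Delta_{eucl}A + Q(A,\nabla A) = 0$ with analytic structure, and concludes that $A$, hence $F$, hence $\det F$ are real-analytic, so the SUCP is immediate. You instead use that in two dimensions $D_A^*F_A = 0$ is equivalent to $\star F_A$ being $D_A$-parallel, split the bundle into the (constant-eigenvalue, hence smooth and parallel) eigenbundles of the skew-Hermitian endomorphism $\star F_A$, and gauge away the flat trace-free part on each block; the conformal cancellation $d_A^*d_A = -\mu^{-1}(\partial_i + A_i)^2$ and the harmlessness of unimodular gauge changes for the SUCP are both correct. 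This abelianization is a nice structural alternative to the paper's gauge-fixing, and it buys a genuinely decoupled system of scalar magnetic equations $(\partial_i + a_{j,i})^2 f = 0$ with $da_j = ic_j\mu\,dx\wedge dy$.

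The endgame, however, is a genuine gap and not a routine extension. First, the algebraic machinery you invoke (Lemma \ref{harmpoly}, Proposition \ref{reductionf22=1}, Proposition \ref{SUCP1}) is developed in the paper only for $2\times 2$ matrices, and even there the conclusion ultimately rests on the harmonic-conjugate argument of Proposition \ref{SUCP1}, which is specific to a product of two solutions; no $m\times m$ version exists, and producing one is precisely the hard content you would need. Second, the step ``$\det F = O(|x|^\infty)$ translates into a vanishing-determinant identity among the leading harmonic polynomials'' is false as stated: the lowest-order homogeneous part of $\det F$ is not the determinant of the matrix of leading Taylor polynomials (permutation products contribute at different total degrees and can cancel), which is exactly why Proposition \ref{reductionf22=1} needs its iterative column reduction already for $m = 2$. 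Third, and most importantly, your reduction lands you with coefficients $a_j$ that are only smooth --- they are tied to the non-analytic conformal factor $\mu$ through $da_j = ic_j\mu\,dx\wedge dy$ --- so you cannot fall back on analyticity, and the rows of $F$ now satisfy \emph{different} scalar operators with smooth first-order terms; Section \ref{sec2} of the paper exhibits $2\times 2$ counterexamples to even the WUCP for scalar operators of the form $a\partial_1^2 + b\partial_2^2 + c\partial_1 + d\partial_2$, so the SUCP you need at the end must use the specific structure of the $a_j$ and is not supplied by the leading-order blow-up analysis alone. Until that final unique continuation statement for general $m$ is proved, the argument is incomplete.
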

\begin{proof}
Assume w.l.o.g. $\det F$ vanishes to infinite order at zero. As in Proposition \ref{isothermal2dsucp}, we look at isothermal coordinates near zero, so that $g = \begin{pmatrix}
\lambda & 0\\
0 & \lambda
\end{pmatrix}$ in these coordinates for a smooth, positive function $\lambda$. The Yang-Mills equations take the form
\begin{align}\label{YM1}
0 = D_A^* F_A = \star D_A \star (dA + A \wedge A)
\end{align}
Let us write simply $A = A_1 dx + A_2 dy$ for $A_1, A_2$ smooth $m \times m$ matrices. Then the above equation takes the form
\begin{align}\label{YM2}
0 = d^*(dA + A \wedge A) + \star [A, \star (dA + A \wedge A)]
\end{align}
where the second term can be rewritten as
\begin{align}
\frac{1}{\lambda} \big(-[A_1, G(A)]dy + [A_2, G(A)]dx\big)
\end{align}
where $G(A) = \lambda \star F_A$ is just a function of $A$. Note that we have, in isothermal coordinates:
\begin{align}\label{hodgeisothermal}
\star dx = - g^{11} |g|^{1/2} dy= -dy, \quad \star dy = dx \text{\, and \,} \star(|g|^{\frac{1}{2}} dx \wedge dy) = 1
\end{align}
Therefore, since $d^* = \star d \star$ and by \eqref{hodgeisothermal}, we have that the Yang-Mills equation \eqref{YM1} is of the following form: $\frac{1}{\lambda}$ times an expression depending only on $A$.

Now we have two choices. By taking the Coulomb gauge in which $d^*A = 0$ (see \cite{cek1}), we have that this condition is equivalent to:
\begin{align}
\frac{\partial A_1}{\partial x} + \frac{\partial A_2}{\partial y} = 0
\end{align}
By applying $d$ to this equation and adding to \eqref{YM2} (after multiplying with $\lambda$), we get an equation of the form
\begin{align}\label{YMnice}
\Delta_{eucl} A + Q(A, \nabla A) = 0
\end{align}
where $Q$ is an analytic (polynomial) function of its entries and $\Delta_{eucl}$ is the Euclidean Laplacian that acts diagonally. Therefore by a well-known property of elliptic equations, we have $A$ is analytic in this gauge. Furthermore, since $d_A^*d_A$ is equal to $\frac{1}{\lambda} P_A$, where $P_A$ is a second order elliptic operator depending only on $A$, we have that $F$ is also analytic in this gauge and so is $\det F$, implying the SUCP and WUCP.

Alternatively, we may consider the harmonic gauge for the connection, i.e. $d^*A = \frac{1}{\lambda}(A_1^2 + A_2^2)$ (see \cite{cek1} for more details). In this gauge, $A$ satisfies:
\begin{align}
\frac{\partial A_1}{\partial x} + \frac{\partial A_2}{\partial y} + A_1^2 + A_2^2= 0
\end{align}
As before, by applying $d$ to this equation and adding to \eqref{YM1} after multiplication by $\lambda$, we are back to the form of equation \eqref{YMnice} and hence to the previous case.
\end{proof}

\section{Applications to the Calder\'on problem for connections}\label{sec7}

Here we apply the result and the proof of Lemma \ref{YMlemma} to the Calder\'on problem for connections (see \cite{cek1, cek2, AGTU}), by using the technique of the proof of Theorem 1.2 from \cite{cek1} to produce a result for surfaces and bundles of \emph{arbitrary} rank. Calder\'on's problem is an inverse boundary value problem that has picked up a lot of attention in the past thirty and more years \cite{Usurvey}.

A similar result for connections was proved in \cite{cek1} for either rank one case and smooth metric, or arbitrary rank but \emph{analytic} metrics and the main novelty here is to extend these methods to the smooth $2$-dimensional case and arbitrary rank.

First, we have the following simple geometric lemma:

\begin{lemma}\label{geomlemma}
Let $(\Omega, g) \subset \mathbb{R}^2$ containing $0$ with $g$ smooth. Fix a smooth embedded curve $0 \in \gamma \subset \Omega$. Then the Riemannian distance function $f(q) := d^2(q, \gamma)$ from a point $q \in \Omega$ to $\gamma$, has the following Taylor expansion at $0$, for $q = (x, y)$:
\begin{align}
f(x, y) = \begin{pmatrix}
x & y
\end{pmatrix} P^Tg(0)P\begin{pmatrix}
x\\
y
\end{pmatrix} + O(|x|^3)
\end{align}
where $P$ is the projection to $\star \dot{\gamma}(0)$ along $\dot{\gamma}(0)$, where $\dot{\gamma}(0)$ is the unit tangent vector to $\gamma$ at $0$.
\end{lemma}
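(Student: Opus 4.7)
The plan is to compute the squared distance $f(q) = d^2(q,\gamma)$ by reducing it to a Euclidean-type quadratic form in $g(0)$ plus an $O(|q|^3)$ error, and then to minimise over the parameter along $\gamma$.

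First I would recall the following standard expansion for the Riemannian distance: for two points $p_1, p_2$ close to $0$ in a smooth Riemannian $(\Omega,g) \subset \mathbb{R}^2$, one has
\[
d^2(p_1,p_2) = (p_1-p_2)^T g(0) (p_1-p_2) + O\bigl((|p_1|+|p_2|)\,|p_1-p_2|^2\bigr).
\]
This follows by integrating $|\dot\sigma(t)|_{g(\sigma(t))}$ along the minimising geodesic $\sigma$ from $p_1$ to $p_2$, noting that $\sigma(t) = p_1 + t(p_2-p_1) + O(|p_1-p_2|^2)$ and that $g(\sigma(t)) = g(0) + O(|p_1|+|p_2|)$.

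Next, parametrise $\gamma$ smoothly with $\gamma(0)=0$ and $\dot\gamma(0)=\tau$ equal to the $g(0)$-unit tangent vector, so that $\gamma(s) = s\tau + O(s^2)$. For $q$ near $0$, set $F_q(s) := d^2(q,\gamma(s))$; by strict convexity at $s=0$ and the implicit function theorem applied to $\partial_s F_q = 0$, there is a unique smooth minimiser $s_\ast(q)$ with $s_\ast(0)=0$. Differentiating the expansion above and solving to leading order yields $s_\ast(q) = \langle q,\tau\rangle_{g(0)} + O(|q|^2)$; in particular $|s_\ast(q)|=O(|q|)$. Substituting back, using $|\tau|_{g(0)}=1$ and collecting terms, a routine algebraic manipulation produces
\[
f(q) = |q|_{g(0)}^2 - \langle q,\tau\rangle_{g(0)}^2 + O(|q|^3).
\]

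Finally, I would identify the right-hand side with the claimed expression. Decompose $q = a\tau + b\,\star\tau$; since in two dimensions the Hodge star (taken with respect to $g$) sends the $g(0)$-unit vector $\tau$ to a $g(0)$-unit vector that is $g(0)$-orthogonal to $\tau$, one has $|q|_{g(0)}^2 = a^2+b^2$ and $\langle q,\tau\rangle_{g(0)} = a$. The projection $P$ in the statement sends $q$ to $b\,\star\tau$, so
\[
\begin{pmatrix} x & y \end{pmatrix} P^T g(0) P \begin{pmatrix} x \\ y \end{pmatrix} = (Pq)^T g(0)(Pq) = b^2 = |q|_{g(0)}^2 - \langle q,\tau\rangle_{g(0)}^2,
\]
matching the previous display modulo $O(|q|^3)$.

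The only delicate point is the cubic control: one must check that the error in the distance expansion, which a priori is $O((|q|+|s_\ast|)\,|q-\gamma(s_\ast)|^2)$, stays $O(|q|^3)$ after minimisation. This is automatic from the bound $|s_\ast(q)|\lesssim|q|$ obtained above, together with $|\gamma(s_\ast)| \lesssim |q|$, so the entire proof reduces to controlled Taylor expansion and a small amount of linear algebra; no step appears to require techniques beyond standard Riemannian calculus.
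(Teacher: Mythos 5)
Your argument is correct, but it takes a genuinely different route from the paper. The paper's proof passes to Fermi (normal) coordinates $(u,v)$ for $\gamma$ with respect to $g$, in which $f$ is \emph{exactly} $v^2$, and then transports the Hessian $\mathrm{diag}(0,2)$ back to the original coordinates via $g=d\psi^T g' d\psi$; the whole content is a change-of-variables computation for the Hessian of $f$. You instead expand the two-point squared distance $d^2(p_1,p_2)$ around $g(0)$ and minimise over the curve parameter, obtaining $f(q)=|q|_{g(0)}^2-\langle q,\tau\rangle_{g(0)}^2+O(|q|^3)$ before identifying this with $(Pq)^Tg(0)(Pq)$. Your route is more self-contained (it uses only the first-order behaviour of geodesics rather than the exact identity in Fermi coordinates), at the cost of having to justify the smooth dependence of the minimiser $s_\ast(q)$ and to track the cubic errors, which you do. Two small cautions: first, you ``differentiate the expansion'' to locate $s_\ast$; strictly one should first obtain smoothness of $s_\ast$ from the implicit function theorem (as you note) and only then Taylor-expand, since an $O$-remainder cannot be differentiated term by term. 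Second, both arguments require $\star$ to be the Hodge star of $g$, so that $\star\tau$ is the $g(0)$-unit normal; your decomposition $q=a\tau+b\,\star\tau$ with $|q|_{g(0)}^2=a^2+b^2$ uses exactly this, and it is the interpretation under which the lemma holds.
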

\begin{proof}
See Appendix \ref{secapp}.
\end{proof}

We are now ready to prove the main result of this section, Theorem \ref{mainthm}.

\begin{proof}[Proof of Theorem \ref{cexn=4}]
The proof is analogous to the proof of Theorem 1.2 from \cite{cek1}, once we have Lemma \ref{YMlemma}. Let us recall the proof briefly and underline the differences. Let $F$ and $G$ be $m \times m$ matrix functions solving $d_A^*d_A F = d_B^*d_B G = 0$ with $F = G$ on $\partial M$ and $F = G = Id$ on an open, non-empty set $V \subset \Gamma$. By Lemma \ref{YMlemma}, we have that the zero sets of $\det F$ and $\det G$ are covered by a countable union of curves $\{C_i \mid i \in \mathbb{N}\}$; by SUCP we have $H = FG^{-1}$ satisfying $H^*A = B$ in a neighbourhood of $V$ with $H$ unitary. 

Next, we perform the drilling procedure from \cite{cek1}. Near a point $p \in C_i$ where $\det F$ vanishes to order $k-1$ locally on $C_i$, meaning that $\det G = y^k g_1$ in the normal coordinate system to $C_i$, by Taylor's theorem, with $g_1(p) \neq 0$. We assume that for $y>0$ (locally) we have $H^*A = B$. Then
\begin{align}\label{Hextension}
H = FG^{-1} = \frac{F \adj G}{y^k g_1}
\end{align}
Notice that $H$ is smooth and bounded for $y > 0$, so by Taylor's theorem $F \adj G = y^k H_1$ for some smooth $H_1$ and so $H = \frac{H_1}{g_1}$ extends smoothly to $y < 0$.

Now, there exists smooth, invertible and unitary $X$ and $Y$, such that $A' = X^*A$ and $B' = Y^*B$ satisfy the Coulomb gauge equation. If we change coordinates to isothermal coordinates by a diffeomorphism $\varphi$ (with $\varphi(x, y) = (u, v)$), then $A'$ and $B'$ are analytic by Lemma \ref{YMlemma}. Moreover $H' := F' G'^{-1} = X^{-1}HY$ smoothly extends to $y < 0$, too. \emph{If} we had $H'$ analytic, then by $H'^*A' = B'$ for $\varphi^{-1}_2(u, v) > 0$ we would have $H'^* A' = B'$ on the whole chart by analyticity and so $H^*A = B$ for $y < 0$. What follows is the proof of this analyticity.

The main issue is that in the version of \eqref{Hextension} for $H'$, the distance function $y$ is not always analytic, since $g$ is just smooth. To work around this, go to isothermal coordinates via $\varphi$ and write
\begin{align}\label{F'G'}
F'(q) \adj (G')(q) = H_1'(q) \big(d(q, \gamma)\big)^k =  H_1'(q) \Big(\frac{d^2(q, \gamma)}{d^2_{eucl}(q, \gamma)}\Big)^\frac{k}{2} \big(d_{eucl}^2(q, \gamma)\big)^\frac{k}{2}
\end{align}
where $\gamma = \varphi(C_i)$, $d_{eucl}$ is the Euclidean distance and $d(q, \gamma)$ denotes the distance of the point $q$ in the chart from $\gamma$ (w.r.t. the isothermal metric). Since $\gamma$ is analytic in these coordinates by Lemma \ref{YMlemma}, the function $d^2_{eucl}(q, \gamma)$  is analytic. We want to prove the quotient $\frac{d^2(q, \gamma)}{d^2_{eucl}(q, \gamma)}$ smoothly extends over $\gamma$.

We want to look at the Taylor expansion of $d^2(q, \gamma)$ at a point on $\gamma$. First change the coordinates by a diffeomorphism $\psi(u, v) = (r, s)$ by going to the normal coordinates for $\gamma$ w.r.t. the Euclidean metric (note this give an analytic chart). Then we apply Lemma \ref{geomlemma} to get that
\[d^2\big((r, s), \psi \circ \gamma\big) = c s^2 + O(|r^2 + s^2|^{\frac{3}{2}})\]
where $c > 0$ is a positive constant and $s$ is the normal variable. Therefore the quotient $D(q) := \frac{d^2(q, \gamma)}{d^2_{eucl}(q, \gamma)}$ has a smooth extension, since $d_{eucl}\big((r, s), \psi \circ \gamma\big) = s$ in these coordinates.

Also, in the $(r, s)$ coordinates, equation \eqref{F'G'} gives that $H_{1}'(r, s) D(r, s)$ is analytic and so we have $H_{1}'(u, v)D(u, v)$ also analytic, since the diffeomorphism $\psi$ is analytic, too. Finally, by going back to equation \eqref{Hextension}, we have that
\[H'(q) = F'(q)G'^{-1}(q) = \frac{F'(q) \adj G'(q)}{y^k(q) g'_1(q)} = \frac{H_1'(q) \Big(\frac{d^2(q, \gamma)}{d^2_{eucl}(q, \gamma)}\Big)^\frac{k}{2} \big(d_{eucl}^2(q, \gamma)\big)^\frac{k}{2}}{g_1'(q) \Big(\frac{d^2(q, \gamma)}{d^2_{eucl}(q, \gamma)}\Big)^\frac{k}{2} \big(d_{eucl}^2(q, \gamma)\big)^\frac{k}{2}} \]
Here $g_1' = \frac{g_1}{\det Y}$, we used \eqref{F'G'} and the $d_{eucl}$ parts cancel to give an analytic function $H'$ in the $(u, v)$ coordinates; we also applied the procedure as for \eqref{F'G'} to see that $g_1'(q) \Big(\frac{d^2(q, \gamma)}{d^2_{eucl}(q, \gamma)}\Big)^\frac{k}{2}$ is analytic. This finishes the procedure of drilling the holes.

Finally, we are left to observe that the remainder of the proof remains more or less the same as in \cite{cek1} (see also Remark 5.2. from \cite{cek1}).
\end{proof}

\begin{rem}\rm
In 2D, there are more powerful techniques to recover the connection (also true in the metric case) from the Dirichlet-to-Neumann map -- see e.g. \cite{AGTU}. However it is useful to have another viewpoint on this problem, extending the technique \cite{cek1} to this case; note also that this technique works for partial data, whereas the results of \cite{AGTU} are stated for full data.
\end{rem}



\appendix
\section{Some elementary lemmas}\label{secapp}

First, we prove an algebraic fact about harmonic polynomials.

\begin{lemma}
Assume we have four non-zero, real harmonic, homogeneous polynomials $p_{ij}$ in $\mathbb{R}^2$ for $i, j = 1, 2$ with $p_{11}p_{22} = p_{12} p_{21}$. Then one of the following two holds, up to constants and permutations:
\begin{itemize}
\item We are in the trivial case, $p_{11} = p_{12}$ and $p_{22} = p_{21}$.
\item We have $p_{22} = 1$, $p_{12} = A \re (z^k) + B \im (z^k)$ and $p_{12} = C \re (z^k) + D \im (z^k)$ for $A, B, C, D \in \mathbb{R}$ with $AC + BD = 0$ and $k \in \mathbb{N}$. Of course, then $p_{11} = p_{12} p_{21}$.
\end{itemize}
Conversely, in any of the two cases we get a quadruple of harmonic polynomials with $p_{11}p_{22} = p_{12} p_{21}$.
\end{lemma}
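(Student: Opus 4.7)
The plan is to exploit the one-dimensional structure of real harmonic homogeneous polynomials on the plane: each such polynomial of degree $d\geq 0$ has the form $\re(\alpha z^d)$ for some $\alpha\in\mathbb{C}$, equivalently $a\,r^d\cos(d\theta+\phi)$ in polar coordinates. I will write $p_{ij} = \re(\alpha_{ij}z^{d_{ij}})$ with $\alpha_{ij}\neq 0$ and apply the product-to-sum formula to decompose each product $p_{k\ell}p_{k'\ell'}$ as a sum of two angular modes, of frequencies $d_{k\ell}+d_{k'\ell'}$ and $|d_{k\ell}-d_{k'\ell'}|$, with amplitudes $\tfrac12|\alpha_{k\ell}\alpha_{k'\ell'}|$ and explicit phases. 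The hypothesis $p_{11}p_{22}=p_{12}p_{21}$ then reduces to a trigonometric identity on $(r,\theta)$ which, by uniqueness of Fourier coefficients, forces rigid relations on the degrees and phases; comparing powers of $r$ immediately gives $d_{11}+d_{22}=d_{12}+d_{21}=:N$.

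Next I would set $m_1:=|d_{11}-d_{22}|$ and $m_2:=|d_{12}-d_{21}|$ and split into two cases. If $m_1=m_2$, I would use row/column swaps to reduce to $d_{11}=d_{12}$ and $d_{22}=d_{21}$; matching Fourier coefficients at the (distinct) frequencies $N$ and $m_1$ separately should then force $\phi_{11}\equiv\phi_{12}$ and $\phi_{22}\equiv\phi_{21}$ modulo $\pi$, together with $|\alpha_{11}\alpha_{22}|=|\alpha_{12}\alpha_{21}|$, yielding the trivial case of the lemma with $p_{11}\propto p_{12}$, $p_{22}\propto p_{21}$ and reciprocal proportionality constants. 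If instead $m_1\neq m_2$, the only way the two angular modes on each side can match those on the other is to have one of $m_1,m_2$ equal to $0$ and the other equal to $N$; WLOG $m_1=0$ and $m_2=N$, giving $d_{11}=d_{22}=:k$, $d_{21}=0$ and $d_{12}=2k$. The LHS then carries a constant term $\tfrac12|\alpha_{11}\alpha_{22}|\cos(\phi_{11}-\phi_{22})$, which must vanish; writing $p_{11}=A\re(z^k)+B\im(z^k)$ and $p_{22}=C\re(z^k)+D\im(z^k)$ this amounts exactly to $AC+BD=0$. After relabelling so that the constant entry sits at position $(2,2)$ and normalising it to $1$, I land in the second case of the lemma.

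The main obstacle is the symmetry bookkeeping: the hypothesis is invariant under swapping rows or columns, rescaling any row or column and transposing, and a careful case analysis must make sure every branch (including the degenerate subcases where an entry is a nonzero constant from the outset, so that $m_i=N$) reduces to one of the two canonical forms. The converse direction is a direct computation: in the trivial case one just substitutes, and in the second case $p_{22}=1$ reduces the identity to $p_{11}=p_{12}p_{21}$; a product-to-sum expansion gives
\[p_{12}p_{21}=\tfrac12(AC+BD)\,r^{2k}+\tfrac12(AC-BD)\re(z^{2k})+\tfrac12(AD+BC)\im(z^{2k}),\]
whose non-harmonic radial part $\tfrac12(AC+BD)r^{2k}$ vanishes precisely when $AC+BD=0$, leaving a harmonic polynomial of degree $2k$ that we call $p_{11}$.
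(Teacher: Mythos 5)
Your proposal is correct and follows essentially the same route as the paper's proof: both write each $p_{ij}$ as $\re(\alpha_{ij}z^{d_{ij}})$ (equivalently $r^{d}\cos(d\theta+\phi)$), expand the products into the two angular frequencies $N=d_{11}+d_{22}=d_{12}+d_{21}$ and $|d_{11}-d_{22}|$, $|d_{12}-d_{21}|$, and use Fourier orthogonality on the circle to split into the case of equal frequency differences (trivial case) versus $\{0,N\}$ (the orthogonality condition $AC+BD=0$). The converse computation is likewise identical.
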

\begin{proof}
Let $n_{ij}$ denote the order of $p_{ij}$ for $i, j = 1, 2$. Then by recalling that the space of real harmonic, homogeneous polynomials degree $k$ is two dimensional for any $k \in \mathbb{N}$, spanned by $\re(z^k)$ and $\im(z^k)$, we observe we may write in polar coordinates $(r, \varphi)$

\[p_{ij}(z) = \re(C_{ij} z^{n_{ij}}) = r_{ij}r^{n_{ij}} \re(e^{i(\varphi_{ij} + n_{ij}\varphi)}) = r_{ij} r^{n_{ij}} \cos(\varphi_{ij} + n_{ij} \varphi)\]

Here $C_{ij} = r_{ij} e^{i \varphi_{ij}}$ are complex constants. By using the condition, we get:
\[E \cos(\varphi_{11} + n_{11} \varphi) \cos(\varphi_{22} + n_{22} \varphi) = \cos(\varphi_{12} + n_{12} \varphi) \cos(\varphi_{21} + n_{21} \varphi)\]
where $E = \frac{r_{11}r_{22}}{r_{12} r_{21}}$. Let us denote $N = n_{11} + n_{22} = n_{12} + n_{21}$. By using trigonometric formulas we may write this as:
\begin{multline}\label{eqncos}
E\big(\cos(\varphi_{11} + \varphi_{22} + \varphi N) + \cos(\varphi_{11} - \varphi_{22} + \varphi (n_{11} - n_{22}))\big)\\
= \cos(\varphi_{12} + \varphi_{12} + \varphi N) + \cos(\varphi_{12} - \varphi_{21} + \varphi (n_{12} - n_{21}))
\end{multline}
Now we use the orthogonality relations for eigenfunctions of the Laplacian on $S^1$:
\[\int_0^{2\pi} \cos(A + k\varphi) \cos(B + l\varphi) d\varphi = \begin{cases}
               0, \text{ if } k \neq \pm l\\
               \pi \cos(A - B), \text{ if } k = l \neq 0\\
               \pi \cos(A + B), \text{ if } k = -l \neq 0\\
               \pi \big(\cos(A + B) + \cos(A - B)\big), \text{ if } k = l = 0\\
            \end{cases}\]
Note that if two of $n_{ij}$ are zero, we are in the first case. We now assume $n_{ij} > 0$. By taking inner products with $\cos(\varphi_{11} + \varphi_{22} + \varphi N)$ and $\cos(\varphi_{12} + \varphi_{21} + \varphi N)$, respectively, we get
\[E \pi = \cos(\varphi_{11} + \varphi_{22} - \varphi_{12} - \varphi_{21}) \text{\,\, and \,\,} E \pi \cos(\varphi_{11} + \varphi_{22} - \varphi_{12} - \varphi_{21}) = \pi\]

and so we get $E = 1$ and $\varphi_{11} + \varphi_{22} - \varphi_{12} - \varphi_{21} = 2k\pi$ for some $k \in \mathbb{Z}$. By taking inner product with $\cos(\varphi_{11} - \varphi_{22} + \varphi (n_{11} - n_{22}))$, we see we must have $|n_{11} - n_{22}| = |n_{12} - n_{21}|$. W.l.o.g. assume $n_{11} - n_{22} = n_{12} - n_{21}$, so $n_{11} = n_{12}$ and $n_{22} = n_{21}$. Then
\[\pi = \pi \cos\big((\varphi_{12} - \varphi_{21}) - (\varphi_{11} - \varphi_{22})\big)\]

which implies $\varphi_{12} - \varphi_{11} = k_1 \pi$ and $\varphi_{22} - \varphi_{21} = k_2 \pi$, where $k_1, k_2 \in \mathbb{Z}$ are of the same parity. This then goes under the first category of solutions.

Let us now assume $n_{22} = 0$ and so $p_{22} = 1$. We are then safe to assume $n_{11}, n_{12}, n_{21} > 0$; otherwise we are in the first case trivially. In \eqref{eqncos}, take inner products with $\cos(\varphi_{12} - \varphi_{21} + \varphi(n_{12} - n_{21}))$ to get that $n_{12} = n_{21}$ (otherwise we get a contradiction with $0 \neq 0$) and so
\[\cos\big(2(\varphi_{12} - \varphi_{21})\big) = -1\]

which forces $\varphi_{12} - \varphi_{21} = \pm \frac{\pi}{2}$ (the argument range is $[0, 2\pi)$) and so the last term in \eqref{eqncos} vanishes. So after a trigonometric transformation, we get
\[2D \cos(\varphi_{11} + \varphi n_{11}) \cos(\varphi_{22}) = \cos(\varphi_{12} + \varphi_{21} + \varphi n_{11})\]
Taking further inner product, we quickly see we must have
\[\varphi_{22} - (\varphi_{12} + \varphi_{21}) = l \pi\]
for $l \in \mathbb{Z}$. Therefore we get a system of conditions:
\[\begin{cases}
               2D \cos(\varphi_{22}) &= (-1)^l\\
               \varphi_{22} &= \varphi_{12} + \varphi_{21} + l\pi\\
               n_{12} &= n_{21} = \frac{n_{22}}{2} = k\\
               \varphi_{12} - \varphi_{21} &= \pm \frac{\pi}{2}
\end{cases}\]

It is easy to check that the condition $\varphi_{12} - \varphi_{21} = \pm \frac{\pi}{2}$ gives exactly the condition on $A = r_{12} \cos(\varphi_{12})$, $B = r_{12} \sin(\varphi_{12})$, $C = r_{21}\cos(\varphi_{21})$ and $D = r_{21} \sin(\varphi_{21})$ in the second item above. Conversely, it is easy to see that the conditions in the second item, are sufficient to have a product of two harmonic homogeneous polynomials of same degree, again harmonic.
\end{proof}

\begin{rem}\rm
Note that in the case of complex harmonic polynomials, we cannot expect to have the analogous result: consider e.g. $p_{ij} = z^{n_{ij}}$ with $n_{11} + n_{22} = n_{12} + n_{21}$ -- the complex variable $z$ makes things more complicated. For other classes of ``non-trivial" examples, we have: $p_{ij} = \bar{z}^{n_{ij}}$ with the same condition on $n_{ij}$ or $p_{22} = 1$, $p_{11} = z^3 - 1$, $p_{12} = z^2 + z + 1$ and $p_{21} = z - 1$ (or slightly more generally for any polynomials in one variable with $p_{11} p_{22} = p_{12} p_{21}$, substituting $z$ would yield an example). Are these families the only possibilities?
\end{rem}

\begin{rem}\rm
We make here a few remarks about possible generalisations of the previous Lemma to higher dimensions, i.e. $n > 2$. We will focus on $n = 3$ case for simplicity, where the space of harmonic homogeneous polynomials of degree $d$ is of dimension $2d + 1$, so things complicate. 

Significant here seem to be the basis $Y^m_l$ of spherical harmonics of degree $l$, for $-l \leq m \leq l$ (these are given as restrictions of harmonic polynomials to the sphere $S^2$ and are eigenfunctions of $\Delta_{S^2}$). Similarly as in Lemma \ref{harmpoly}, after we quotient out the radial part, we are left with linear combinations of spherical harmonics. The role of products of trigonometric functions for $S^1$ is taken here by the product formula:\footnote{This follows from the representation theory of $SO(3)$ on $S^2$ -- more precisely, the proof considers the irreducible representations $H_k$ of $SO(3)$ on spherical harmonics of degree $k$ and the formula $H_k \otimes H_l \cong \bigoplus_{r = |k - l|}^{k+l}H_r$, together with a formula for $Y_l^m$ in terms of rotation matrices. See \cite{ST}, p.216 and equation 3.6.52 in the same book for a derivation.}
\begin{align}
Y_l^m Y_{l'}^{m'} = \sum_{l''} c(l'', l', m', l, m)Y_{l''}^{m''}
\end{align} 
where $m'' = m + m'$ and $|l - l'| \leq l \leq l + l''$. This might be relevant for a derivation of a version of the previous Lemma (in the real case) for $n = 3$, as we could speculate to have $|n_{11} - n_{22}| = |n_{12} - n_{21}|$, so that up to permutations ${n_{11}, n_{12}} = {n_{21}, n_{22}}$. We leave this as an open question.
\end{rem}

Now we digress from harmonic polynomials and consider a simple geometric lemma. 

\begin{lemma}\label{geomlemma}
Let $(\Omega, g) \subset \mathbb{R}^2$ containing $0$ with $g$ smooth. Fix a smooth embedded curve $0 \in \gamma \subset \Omega$. Then the Riemannian distance function $f(q) := d^2(q, \gamma)$ from a point $q \in \Omega$ to $\gamma$, has the following Taylor expansion at $0$, for $q = (x, y)$:
\begin{align}\label{geomlemmaeq}
f(x, y) = \begin{pmatrix}
x & y
\end{pmatrix} P^Tg(0)P\begin{pmatrix}
x\\
y
\end{pmatrix} + O(|x|^3)
\end{align}
where $P$ is the projection to $\star \dot{\gamma}(0)$ along $\dot{\gamma}(0)$, where $\dot{\gamma}(0)$ is the unit tangent vector to $\gamma$ at $0$.
\end{lemma}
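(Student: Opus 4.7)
The plan is to reduce the problem to a quadratic minimization along $\gamma$ by replacing the Riemannian distance with its leading-order Euclidean approximation at $0$. First I would fix an arc-length parametrization $s \mapsto \gamma(s)$ with $\gamma(0) = 0$, so that $T := \dot\gamma(0)$ is a $g(0)$-unit vector; setting $N := \star T$, the pair $\{T, N\}$ is $g(0)$-orthonormal. Any $q = (x, y) \in \mathbb{R}^2$ has a unique decomposition $q = aT + bN$, and by the very definition of $P$ we get $Pq = bN$, hence $g(0)(Pq, Pq) = b^2$, which is exactly the quadratic form $q^T P^T g(0) P q$ appearing on the right-hand side of \eqref{geomlemmaeq}.

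The main analytic ingredient is the elementary expansion
\begin{equation*}
d_g^2(p, q') = g_{ij}(0)(p - q')^i (p - q')^j + O((|p| + |q'|)^3),
\end{equation*}
valid for $p, q'$ in a fixed coordinate chart near $0$; it is obtained by computing the length of the straight segment between $p$ and $q'$ (or equivalently of the minimizing geodesic, which stays $O(|p|+|q'|)$-close to this segment) using only $g_{ij}(x) = g_{ij}(0) + O(|x|)$. Applying this with $q' = \gamma(s) = sT + \tfrac{s^2}{2}\ddot\gamma(0) + O(s^3)$ and writing $Q(v) := g(0)(v, v)$, I obtain
\begin{equation*}
d_g^2(q, \gamma(s)) = Q(q) - 2sa + s^2 + O((|q| + |s|)^3),
\end{equation*}
where I used $g(0)(q, T) = a$ and $Q(T) = 1$, and absorbed the cross term $s^2 g(0)(q, \ddot\gamma(0)) = O(s^2 |q|)$ into the remainder.

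Minimizing in $s$ gives a critical point $s_* = a + O(|q|^2)$, and substituting back, using $Q(q) = a^2 + b^2$ by $g(0)$-orthonormality of $\{T, N\}$, yields
\begin{equation*}
d_g^2(q, \gamma(s_*)) = b^2 + O(|q|^3) = g(0)(Pq, Pq) + O(|q|^3).
\end{equation*}
To conclude, I would observe that for $q$ small enough the Riemannian infimum $d^2(q, \gamma)$ is attained at some $s_{\min}$ close to $s_*$, by smoothness and embeddedness of $\gamma$ (which prevents $\gamma$ from returning close to $0$); since $s_*$ is an approximate critical point of $s \mapsto d_g^2(q, \gamma(s))$, evaluating at $s_*$ or at $s_{\min}$ differs only by $O(|q|^4)$. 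The main obstacle is the quadratic expansion of $d_g^2$ displayed above — it is standard but needs some care to justify in a chart where the metric is not in normal form. The rest of the proof is just Taylor expansion, one-variable minimization, and the identification $g(0)(Pq, Pq) = q^T P^T g(0) P q$.
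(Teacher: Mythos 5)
Your argument is correct, but it follows a genuinely different route from the paper. The paper passes to Fermi (normal) coordinates $(u,v)$ for $\gamma$ with respect to $g$, in which $f \equiv v^2$ exactly, and then computes the Hessian of $f$ at the origin in the original coordinates via the chain rule and the identity $g = d\psi^T g' \, d\psi$ for the coordinate change $\psi$. You instead work entirely in the original chart: you replace $d_g^2$ by the frozen-coefficient quadratic form $g_{ij}(0)(p-q')^i(p-q')^j$ up to a cubic error, expand $\gamma(s)$ to second order, and minimize the resulting one-variable quadratic. Your approach is more elementary and self-contained (no appeal to the existence of Fermi coordinates), at the cost of having to justify the expansion $d_g^2(p,q') = |p-q'|_{g(0)}^2 + O((|p|+|q'|)^3)$ and to localize the minimizer using embeddedness; the paper's approach is shorter once Fermi coordinates are granted and additionally exhibits $f$ as a smooth function near $\gamma$. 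One small caveat in your write-up: the remainder $O((|q|+|s|)^3)$ is not differentiable, so you cannot literally locate a critical point $s_* = a + O(|q|^2)$ or claim an $O(|q|^4)$ discrepancy between $s_*$ and $s_{\min}$; but this is harmless, since the two-sided bound you need follows directly --- evaluating at $s=a$ gives $d^2(q,\gamma) \le b^2 + O(|q|^3)$, while $(s-a)^2 + b^2 \ge b^2$ together with $|s_{\min}| = O(|q|)$ gives the matching lower bound, which is all the lemma requires.
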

\begin{proof}
By taking the normal coordinates for $\gamma$ with respect to $g$ and denote the new coordinate system by $(u, v) = \varphi(x, y)$ (such that $\varphi(0, 0) = (0, 0)$), then in new coordinates we know $f(u, v) = v^2$, which is coherent with \eqref{geomlemmaeq} for the metric $g' = \varphi_*g$. All we have to do now is to check this transform in the correct way back to $(x, y)$ coordinates. 

By letting $\varphi^{-1} = \psi$ and by observing that $d_{g}\big(\psi(u, v), \gamma\big) = d_{\psi^* g}\big((u, v), \varphi \circ \gamma\big) = v^2$ and differentiating with respect to $(u, v)$, we get at $(0, 0)$:
\[D^2f(\partial_u \psi, \partial_u \psi) = 0, \quad D^2f(\partial_u \psi, \partial_v \psi) = 0 \quad \text{and} \quad D^2f(\partial_v \psi, \partial_v \psi) = 2\]
This line gives the Hessian of $u$ in the basis $\psi_u, \psi_v$. From this and the formula $g = \psi^* g' = d\psi^T g' d\psi$, we may compute the Hessian in the standard basis and obtain the final result.
\end{proof}

\end{document}